\numberwithin{equation}{section}
\newtheorem*{theorem*}{Theorem}
\newtheorem{theorem}{Theorem}[section]
\newtheorem{lemma}[theorem]{Lemma}
\newtheorem{proposition}[theorem]{Proposition}
\newtheorem{corollary}[theorem]{Corollary}
\theoremstyle{definition}
\newtheorem{example}[theorem]{Example}
\newtheorem{remark}[theorem]{Remark}
\newtheorem{definition}[theorem]{Definition}
\newcommand{\cF}{\mathcal{F}}
\newcommand{\cO}{\mathcal{O}}
\newcommand{\cP}{\mathcal{P}}
\newcommand{\bF}{\mathbb{F}}
\newcommand{\bN}{\mathbb{N}}
\newcommand{\bQ}{\mathbb{Q}}
\newcommand{\bT}{\mathbb{T}}
\newcommand{\bZ}{\mathbb{Z}}
\newcommand{\id}{\operatorname{id}}
\newcommand{\im}{\operatorname{im}}
\newcommand{\Gal}{\operatorname{Gal}}
\newcommand{\PGL}{\operatorname{PGL}}
\newcommand{\Hom}{\operatorname{Hom}}
\newcommand{\Br}{\operatorname{Br}}
\newcommand{\SL}{\operatorname{SL}}
\newcommand{\SO}{\operatorname{SO}}
\newcommand{\Spin}{\operatorname{Spin}}
\newcommand{\Spec}{\operatorname{Spec}}
\newcommand{\trdeg}{\operatorname{trdeg}}
\newcommand{\ed}{\operatorname{ed}}
\newcommand{\rank}{\operatorname{rank}}
\newcommand{\sep}{\operatorname{sep}}
\newcommand{\sat}{\operatorname{sat}}
\newcommand{\colim}{\operatorname{colim}}
\newcommand{\Char}{\operatorname{char}}
\newcommand{\Sets}{\operatorname{Sets}}
\newcommand{\Mat}{\operatorname{M}}
\newcommand{\CSA}{\operatorname{CSA}}
\newcommand{\Fields}{\operatorname{Fields}}
\newcommand{\Set}{\operatorname{Sets}}
\begin{document}
\author{Danny Ofek}   
\thanks{Danny Ofek was partially supported by a graduate fellowship from 
the University of British Columbia.} 

\author{Zinovy Reichstein}
\address{Department of Mathematics\\
	University of British Columbia\\
	Vancouver, BC V6T 1Z2\\Canada}
\thanks{Zinovy Reichstein was partially supported by an Individual Discovery Grant from the
	National Sciences and Engineering Research Council of
	Canada.}

\subjclass[2020]{12G05, 12J20, 16K50, 19C30}

%
%
%
\keywords{Essential dimension, Galois cohomology, Brauer group, valued field}

\title{Essential dimension of cohomology classes via valuation theory}
\maketitle

\begin{abstract} We give a formula for the essential dimension of a cohomology class $\alpha$ in $H^d(K, \bQ_p/\bZ_p (d))$ when
$K$ is a strictly Henselian field. This formula is particularly explicit in the case, where $\alpha$ is 
a Brauer class (for $d = 2$). As an application of our bound with $d = 3$, we study the essential dimension of exceptional 
groups by examining the image of the Rost invariant.
\end{abstract}

\section{Introduction}


Let $k$ be a base field and $k \subset K$ be a field extension. Consider an algebraic object  
$\alpha$ defined over $K$. That is, $\alpha \in \cF(K)$ is an object of a covariant functor $\cF : \Fields_k \to \Set$,
where $\Fields_k$ is the category of field extensions $K/k$ and $\Sets$ is the category of sets. 
We think of the functor $\mathcal{F}$ as specifying the type of object under consideration
(a quadratic form, an associative algebra, a Lie algebra, etc.) and
$\mathcal{F}(K)$ as the set of isomorphism classes of objects this type defined over $K$. 
The essential dimension $\ed_k(\alpha)$ of our object $\alpha \in \mathcal{F}(K)$ is the minimal transcendence degree
$\trdeg_k(K_0)$ of an intermediate field $k \subset K_0 \subset K$ to which $\alpha$ descends.
Informally speaking, $\ed_k(\alpha)$ is the minimal number of parameters required to define $\alpha$.
The essential dimension $\ed_k(\cF)$ of the functor $\mathcal{F}$ is then defined as the maximal value 
of $\ed_k(\alpha)$ taken over all $L \in \Fields_k$ and all
$\alpha \in \cF(L)$. These definitions are recalled in a more formal way in Section~\ref{sect.preliminaries}. 
For surveys of this research area, we refer the reader to~\cite{berhuy2003essential, merkurjev-survey, reichstein2010essential}.

Much of the work on essential dimension has centered on the functor
\begin{equation} \label{e.h1} H^1(\ast, G) \colon K \mapsto H^1(K, G). 
\end{equation}
Here $H^1(K, G)$ denotes the non-abelian cohomology set, whose elements are isomorphism classes of (fppf) $G$-torsors 
over $\Spec(K)$. The essential dimension of this functor is called the essential dimension of $G$ and is denoted 
by $\ed_k(G)$. 

The main focus of this paper will be on a different functor,
$$H_p^d \colon K \mapsto H^d(K, \bQ_p/\bZ_p (d)), $$
where $p \neq \Char k$ is a prime.  We will study the essential dimension of objects of this functor (i.e., cohomology classes)
by valuation-theoretic methods, in the spirit of~\cite[Section 3f]{merkurjev2009essential} or \cite{meyer2012valuation}.

Let $(F,\nu)$ be a valued field with value group $\bZ^r$. Assume $k\subset F$ and $\nu$ is trivial on $k$. 
There is a canonical homomorphism 
$$\wedge\nu: H^d_p(F) \longrightarrow \bQ_p/\bZ_p \otimes \bigwedge \bZ^r, $$
where $\bigwedge \bZ^r$ is the exterior algebra on $\bZ^r$. It is given, on symbols, by
$$ \wedge\nu (a_1,\dots,a_d)_{p^n} = \frac{1}{p^n}\otimes \nu(a_1)\wedge\dots\wedge \nu(a_d); $$
for details see Section~\ref{sect.construction_of_nu}.

In Definition~\ref{def.contraction}, we associate to any $\omega \in  \bQ_p/\bZ_p \otimes \bigwedge \bZ^r$,
a finite abelian subgroup $A_\omega \subset (\bQ_p/\bZ_p)^r$. Given $\omega$, the group $A_{\omega}$ is usually 
easy to compute. Our main result is the following:

\begin{theorem}\label{thm.main} Let $(F,\nu)$ be a valued field with value group $\bZ^r$. Assume $\nu$ is trivial on a subfield $k\subset F$ with $\Char k \neq p$. Let $\alpha \in H^d_p(F)$ and $\omega = \wedge\nu(\alpha)$. Then  
     $$ \ed_k(\alpha) \geqslant \ed_k(\alpha; p) \geqslant  \dim_{\bF_p}(A_\omega/ pA_\omega).$$
    Moreover, if $(F,\nu)$ is strictly Henselian, then equality holds:
    $$\ed_k(\alpha) = \ed_k(\alpha; p) = \dim_{\bF_p}(A_\omega/ pA_\omega).$$
\end{theorem}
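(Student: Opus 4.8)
The plan is to establish the lower bound first and then the matching upper bound in the strictly Henselian case. For the lower bound, the strategy is to reduce the question about a general $d$-dimensional cohomology class to the structure of the abelian group $A_\omega$ attached to $\omega = \wedge\nu(\alpha)$. First I would recall (from the earlier sections producing $\wedge\nu$ and Definition~\ref{def.contraction}) that $\ed_k(\alpha; p)$ is bounded below by the essential dimension of the "residue-type" data that $\alpha$ necessarily carries. Concretely, if $\alpha$ descends to an intermediate field $K_0$ with $\trdeg_k(K_0) = n$, then restricting $\nu$ to $K_0$ (or passing to a suitable extension of the valuation) shows that $\omega$ must lie in the image of $\bQ_p/\bZ_p \otimes \bigwedge \bZ^{r'}$ for the value group $\bZ^{r'}$ of $K_0$, with $r' \le n$ by Abhyankar's inequality. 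The combinatorial input is then that $\dim_{\bF_p}(A_\omega / pA_\omega)$ cannot exceed the rank $r'$ of any value group through which $\omega$ factors; this is the content of the construction of $A_\omega$ and should follow from a direct linear-algebra argument over $\bZ_p$ (or $\bZ/p^n$). Combining these gives $n \ge \dim_{\bF_p}(A_\omega/pA_\omega)$, hence the lower bound, and since the whole argument only uses $p$-primary information it gives the bound on $\ed_k(\alpha; p)$ as well, which is the stronger statement.

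For the upper bound in the strictly Henselian case, the plan is to exhibit an explicit descent. Since $(F,\nu)$ is strictly Henselian with value group $\bZ^r$ and residue field $\ol{F}$ separably closed of characteristic $\ne p$, the group $H^d_p(F)$ decomposes via the valuation: using the (tame) symbol/residue maps and the fact that $H^i(\ol F, \bQ_p/\bZ_p(j)) = 0$ for $i > 0$, one gets that $\alpha$ is determined entirely by $\omega = \wedge\nu(\alpha) \in \bQ_p/\bZ_p \otimes \bigwedge \bZ^r$. Write $\omega$ in terms of a basis adapted to $A_\omega$: after a $\GL_r(\bZ)$-change of coordinates on the value group (realized by rescaling a system of uniformizers $t_1, \dots, t_r$), one can assume $\omega$ is supported on the first $m := \dim_{\bF_p}(A_\omega/pA_\omega)$ coordinates — this is exactly what the definition of $A_\omega$ is designed to guarantee. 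Then $\alpha$ is, up to the decomposition above, a sum of symbols in $t_1, \dots, t_m$ with constant coefficients (units mapping to the separably closed residue field, which may be taken in $k$ or a finite extension thereof, contributing nothing to transcendence degree), so $\alpha$ is defined over $k(t_1, \dots, t_m)$ — more precisely over its image inside $F$ — which has transcendence degree $m$ over $k$. Hence $\ed_k(\alpha) \le m$, and combined with the lower bound all three quantities coincide.

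The main obstacle I expect is the upper-bound step: making precise the claim that in the strictly Henselian case $\alpha$ is genuinely "generated by symbols in the uniformizers with residue-field coefficients," and that after the $\GL_r(\bZ)$ normalization only $m$ uniformizers are needed. This requires a clean decomposition of $H^d_p(F)$ for a strictly Henselian field of rank $r$ — presumably an iterated application of the Gysin/residue sequence for a complete (or Henselian) discretely valued field with separably closed, or at least $H^d_p$-trivial, residue field, together with the vanishing $H^{>0}(\ol F, \bQ_p/\bZ_p(\ast)) = 0$ — and care that the splitting is compatible with $\wedge\nu$. The subtlety is that $\omega$ being supported on $m$ coordinates controls only the "top" symbol part; one must check the lower residue pieces of $\alpha$ also descend, which is where strict Henselianity (forcing all positive-degree residue cohomology of $\ol F$ to vanish) is essential. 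The lower bound, by contrast, should be comparatively routine: it is a transcendence-degree/Abhyankar estimate packaged against the linear algebra defining $A_\omega$, and does not require Henselianity.
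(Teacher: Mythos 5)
Your plan follows the same architecture as the paper's proof: the lower bound comes from restricting the valuation to a descent field, the inequality $\trdeg_k(K_0)\geqslant \rank(\nu K_0)$, and the observation that $\omega\in\bigwedge W/p^\infty$ forces $A_\omega\subset W/p^\infty$, hence $\dim_{\bF_p}(A_\omega/pA_\omega)\leqslant\rank W$; the upper bound comes from showing that over a strictly Henselian field $\alpha$ is determined by $\omega$, normalizing $\omega$ into $\bigwedge U/p^\infty$ for a saturated subgroup $U$ of rank $m$, and descending to the subfield generated by $m$ uniformizers (the paper's Propositions~\ref{prop.general_lower_bound} and~\ref{prop.computing_rho} plus Corollary~\ref{cor.ker2}).

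The one step where you take a genuinely different route is the strictly Henselian decomposition. You propose iterated Gysin/residue sequences with vanishing of the higher cohomology of the residue field. The paper instead uses the norm residue isomorphism to write every class as a sum of symbols, shows each symbol is congruent modulo $\ker(\wedge\nu)$ to a monomial one in the uniformizers, and shows $\ker(\wedge\nu)$ is generated by symbols with a unit entry, which die over the strict Henselization because units there are $p^n$-th powers. This is worth noting because a valuation with value group $\bZ^r$ need not be a composite of $r$ discrete rank-one valuations (the ordering on $\bZ^r$ is arbitrary, e.g.\ a dense embedding into $\bR$), so the iterated-residue picture does not literally apply in the generality of the theorem, whereas the symbol argument does. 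Two smaller points you gloss over: (i) for the $\ed_k(\alpha;p)$ version of the lower bound one must choose, for a given prime-to-$p$ extension $L/F$, an extension of $\nu$ to $L$ whose ramification index is prime to $p$ (this needs Ostrowski's theorem and the fundamental inequality, Lemma~\ref{lem.prime_to_p1}), and then check that $\bigwedge\Gamma/p^\infty\to\bigwedge\Gamma'/p^\infty$ is an isomorphism for such extensions (Lemma~\ref{lem.prime_to_p2}); and (ii) the claim that $\omega$ can be supported on $m$ coordinates is exactly the nontrivial direction of Proposition~\ref{prop.computing_rho}, proved by contracting against dual basis wedges and using that the saturation of $\langle u_1,\dots,u_m\rangle$ is a direct summand. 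Neither is a fatal gap, but both require an argument.
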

Here $\ed_k(\alpha; p)$ is \emph{the essential dimension of $\alpha$ at $p$}; see Section~\ref{sect.prel1}.
Note that first inequality in the statement of Theorem~\ref{thm.main}, $\ed_k(\alpha) \geqslant \ed_k(\alpha; p)$,
is immediate from the definition of $\ed_k(\alpha; p)$; see~\eqref{e.ed-vs-ed-at-p}. 
As a consequence of Theorem~\ref{thm.main} we obtain the following: 

\begin{corollary} \label{cor.infty} Let $p$ be a prime and $k$ be a field of characteristic $\neq p$. Then  
\begin{equation}\label{e.infty}
    \ed_k(H^d_p; p) = \infty
\end{equation}
for any $d \geqslant 2$. 
\end{corollary}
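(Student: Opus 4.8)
The plan is to exhibit, for each integer $N$, a strictly Henselian valued field $(F, \nu)$ over $k$ and a class $\alpha \in H^d_p(F)$ with $\ed_k(\alpha; p) \geqslant N$, using the equality case of Theorem~\ref{thm.main}. Since $\ed_k(H^d_p; p)$ is the supremum of $\ed_k(\alpha; p)$ over all field extensions and all classes, producing such examples for arbitrarily large $N$ forces the supremum to be $\infty$. Because $d \geqslant 2$, it suffices to treat $d = 2$ and then pad a degree-$2$ example up to degree $d$ by cup product with a suitable symbol in $H^{d-2}$ coming from extra independent uniformizers; alternatively one builds the degree-$d$ example directly. I will describe the $d = 2$ case and indicate the general one.

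First I would fix $r = 2N$ and let $k_0 = k(t_1, \dots, t_{2N})$ be the rational function field, equipped with the valuation $\nu$ corresponding to the standard $\bZ^{2N}$-grading (so $\nu(t_i)$ is the $i$-th basis vector $e_i$ of $\bZ^{2N}$), and let $(F, \nu)$ be the strict Henselization of $k_0$ at this valuation, which still has value group $\bZ^{2N}$ and residue field the separable closure of $k$. Next I would choose the class
$$\alpha = \sum_{j=1}^{N} (t_{2j-1}, t_{2j})_{p} \in H^2_p(F),$$
a sum of degree-$p$ symbols, so that by the formula for $\wedge\nu$ on symbols one computes
$$\omega = \wedge\nu(\alpha) = \frac{1}{p} \otimes \sum_{j=1}^{N} e_{2j-1} \wedge e_{2j} \in \bQ_p/\bZ_p \otimes \textstyle\bigwedge \bZ^{2N}.$$
The main computational step is then to evaluate the associated group $A_\omega \subset (\bQ_p/\bZ_p)^{2N}$ from Definition~\ref{def.contraction} for this $\omega$ and to check that $\dim_{\bF_p}(A_\omega / p A_\omega) = 2N \geqslant N$: because the wedge $\sum e_{2j-1}\wedge e_{2j}$ is a nondegenerate alternating form on $\bZ^{2N}$ (a symplectic form), every contraction against an element of the dual lattice is nonzero, and one should get that $A_\omega$ contains the full $p$-torsion $(\frac{1}{p}\bZ/\bZ)^{2N}$, hence has the claimed $\bF_p$-corank. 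Once this is in hand, Theorem~\ref{thm.main} applied to the strictly Henselian field $(F, \nu)$ gives $\ed_k(\alpha; p) = 2N \geqslant N$, and letting $N \to \infty$ yields \eqref{e.infty} for $d = 2$.

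For general $d \geqslant 2$, I would instead take $r = dN$ with uniformizers $t_{ij}$ for $1 \le i \le N$, $1 \le j \le d$, and set $\alpha = \sum_{i=1}^N (t_{i1}, t_{i2}, \dots, t_{id})_p$, so that $\omega = \frac{1}{p}\otimes \sum_i e_{i1}\wedge\cdots\wedge e_{id}$; the analogous computation should give $\dim_{\bF_p}(A_\omega/pA_\omega)$ growing linearly in $N$, and Theorem~\ref{thm.main} again finishes the argument. The one point requiring care — and the step I expect to be the main obstacle — is the explicit determination of $A_\omega$ from Definition~\ref{def.contraction}: I need the combinatorics of contracting the decomposable-sum form $\omega$ to produce a subgroup of $(\bQ_p/\bZ_p)^r$ whose $p$-corank is exactly $r$ (equivalently, that the symbols I chose are "independent" from the valuation-theoretic point of view and none of the mixed wedge terms collapse). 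If the naive sum-of-symbols choice does not already give corank $r$, a safe fallback is to use a single "generic" symbol in many variables or to iterate the basic two-variable construction in a product/compositum of independent valued fields, invoking additivity of essential dimension under such independent constructions; in any case the existence of examples of unbounded essential dimension is robust and does not depend on the precise optimal constant.
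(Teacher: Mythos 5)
Your proposal is correct and follows essentially the same route as the paper, which proves Corollary~\ref{cor.infty} via Proposition~\ref{prop.variables}: take $\alpha=\sum_{i=1}^{r}(a_{i1},\dots,a_{id})_{p^n}$ over the iterated Laurent series field in $rd$ variables, compute $\omega=\wedge\nu(\alpha)$ as a sum of disjoint pure wedges, and extract each $e_{ij}$ by contracting with the dual wedge omitting $e^{ij}$ to get $A_\omega=\bZ^{rd}/p^n$ and hence $\ed_k(\alpha)=\ed_k(\alpha;p)=rd$. Your cosmetic variations (strictly Henselizing a rational function field instead of using Laurent series, working with $p$ rather than $p^n$) are immaterial, and your hedged ``fallback'' is unnecessary since the naive sum-of-symbols choice does give corank exactly $rd$.
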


For a proof, see Section~\ref{sect.main}. 

\subsection*{Central simple algebras and Brauer classes}
Let us now consider the functor~\eqref{e.h1} in the special case, where $G$ is the projective linear group $\PGL_n$.
It is well known that the functor $H^1(*, \PGL_n)$ is isomorphic to
$$\CSA_n \colon K \mapsto \{\text{isomorphism classes of central simple algebras of degree $n$ over $K$} \}.$$
Computing the essential dimension of $\CSA_n$ is a long-standing problem, going back to Procesi~\cite[Section 2]{procesi}.
The deepest results to date have been about $\ed_k(\CSA_n; p)$. Using primary decomposition
it is easy to see that $\ed(\CSA_n; p) = \ed_k(\CSA_{p^r}; p)$, where $p^r$ is the highest power of $p$ dividing $n$. 
Thus we may assume without loss of generality that $n = p^r$.
When $r = 1$, it is known that $\ed(\PGL_p; 2) = 2$; see, 
e.g., \cite[Lemma 8.5.7]{reichstein2000essential}.
For $r \geqslant 2$, we have
\[ (r-1)p^r + 1 \leqslant \ed(\PGL_{p^r}; p) \leqslant p^{2r-2} + 1 \, , \]
where the lower bound is due to Merkurjev~\cite{merkurjev-PGLn} and the upper bound
is due to Ruozzi~\cite{ruozzi}. In particular,
\begin{equation} \label{e.PGL_p^2}
\ed_k(\PGL_{p^2}) = \ed_k(\PGL_{p^2}; p) = p^2 + 1; 
\end{equation}
see~\cite{merkurjev-PGLp^2}.

Now consider the morphism of functors $\CSA_n \to \Br$ taking a central simple algebra $A$ to its Brauer class $[A]$.
If $A$ is not split and $k$ is algebraically closed, then Tsen's Theorem tells us that
\begin{equation} \label{e.brauer1}
\ed_k([A]; p) \geqslant 2.
\end{equation}
We also have the obvious inequalities 
\begin{equation} \label{e.brauer2}
\ed_k([A]) \leqslant \ed_k(A) \quad \text{and} \ed_k([A]; p) \leqslant \ed_k(A; p) \, . 
\end{equation}
The inequalities~\eqref{e.brauer2} may be sharp, because 
the essential dimension of $\Mat_s(A)$ may be strictly smaller than the essential dimension of $A$ for some $s > 1$. 
In fact, this is exactly what happens when $A$ is a universal division algebra of degree $4$. In this case~\eqref{e.PGL_p^2} 
tells us that $\ed_k(A) = \ed_k(A; 2) = 5$; on the other hand, by \cite[Corollary 1.4]{lorenz2003}, $\ed_k(\Mat_2(A)) = 4$. 

Using Theorem~\ref{thm.main}, we can give lower bounds on the essential dimension of some Brauer classes that go beyond~\eqref{e.brauer1}. 

\begin{theorem}\label{thm.brauer} Let $p$ be a prime, $k$ be a field containing a primitive root of unity of degree $p^d$ for every $d \geqslant 1$.
Let $(F,\nu)$ be a valued field with value group $\bZ^r$. Assume that $k\subset F$ and $\nu_{|k}$ is trivial. Let $\alpha \in \Br(F)$ is a sum of Brauer classes of symbol algebras:
$$\alpha = (a_1,b_1)_{p^n} + \dots + (a_r,b_r)_{p^n},$$
for some integer $n \geqslant 1$. Consider the skew-symmetric matrix 
$$M = \sum_{i=1,\dots,k} \nu(a_i)\nu(b_i)^t - \nu(b_i)\nu(a_i)^t \in \Mat_r(\bZ), $$
where we view $\nu(a_i)$ and $\nu(b_i) \in \bZ^r$ as $r \times 1$ matrices with integer entries
and their transposes, $\nu(a_i)^t$ and $\nu(b_i)^t$ as $1 \times r$ matrices.
Let $d_1 \mid d_2 \mid \dots \mid d_r$ be the elementary divisors of $M$ and assume $i_0$ is the largest subscript 
such that $p^n$ does not divide $d_{i_0}$. Then 

\smallskip
(a) $\ed_k(\alpha; p) \geqslant i_0$, where $\alpha$ is viewed as an object of the functor $\Br$.
In particular, if $p^r$ does not divide $\det(M)$, then $\ed_k(\alpha)\geqslant r$. 

\smallskip
(b) If $(F,\nu)$ is strictly Henselian, then $\ed_k(\alpha) = \ed_k(\alpha ;p) = i_0$.
\end{theorem}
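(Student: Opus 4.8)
The plan is to deduce this from Theorem~\ref{thm.main} applied with $d = 2$, so the real work is to identify the group $A_\omega$ associated to $\omega = \wedge\nu(\alpha)$ with an invariant of the skew-symmetric matrix $M$. First I would compute $\omega$ explicitly. Since $\alpha = \sum_i (a_i, b_i)_{p^n}$ is a sum of symbols of exponent dividing $p^n$, the homomorphism $\wedge\nu$ gives
$$\omega = \wedge\nu(\alpha) = \frac{1}{p^n} \otimes \sum_{i} \nu(a_i) \wedge \nu(b_i) \in \bQ_p/\bZ_p \otimes \textstyle\bigwedge^2 \bZ^r.$$
Identifying $\bigwedge^2 \bZ^r$ with skew-symmetric $r \times r$ integer matrices via $u \wedge v \mapsto u v^t - v u^t$, the element $\sum_i \nu(a_i) \wedge \nu(b_i)$ corresponds exactly to $M$. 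So $\omega$ is, up to the $\frac{1}{p^n} \otimes (-)$ identification, the matrix $M$ read modulo $p^n$.

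Next I would unwind Definition~\ref{def.contraction}: the group $A_\omega \subset (\bQ_p/\bZ_p)^r$ is obtained by contracting $\omega$ against the dual lattice, i.e.\ $A_\omega$ is generated by the elements $\langle \xi, \omega \rangle$ as $\xi$ ranges over $\Hom(\bZ^r, \bZ) = (\bZ^r)^\vee$. Under the identification above, contracting $\frac{1}{p^n}\otimes M$ against the standard basis vectors produces the columns of $M$ divided by $p^n$, viewed in $(\bQ_p/\bZ_p)^r$. Hence $A_\omega$ is the image in $(\bQ_p/\bZ_p)^r \cong (\tfrac{1}{p^n}\bZ/\bZ)^r$ of the lattice spanned by the columns of $M$; equivalently $A_\omega \cong (\bZ^r + p^n (\text{column space of } M)^{-1}\cdots)$—more cleanly, $A_\omega \cong \operatorname{coker}\!\bigl(\bZ^r \xrightarrow{M} (\tfrac{1}{p^n}\bZ/\bZ)^r\bigr)$, i.e.\ the $p$-primary part of $\bZ^r / (p^n \bZ^r + M\bZ^r)$. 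Now put $M$ in Smith normal form with elementary divisors $d_1 \mid \cdots \mid d_r$. Then this group is $\bigoplus_{j} \bZ/\gcd(p^n, d_j)\bZ \cdot \tfrac{1}{?}$—concretely $A_\omega \cong \bigoplus_{j=1}^r \bigl(\tfrac{1}{p^n}\bZ / \bigl(\tfrac{d_j}{p^n}\bZ + \bZ\bigr)\bigr)$, whose $j$-th summand is nonzero precisely when $p^n \nmid d_j$. Therefore $\dim_{\bF_p}(A_\omega / p A_\omega)$ equals the number of indices $j$ with $p^n \nmid d_j$, which by the divisibility $d_1 \mid \cdots \mid d_r$ is exactly $i_0$.

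With $\dim_{\bF_p}(A_\omega/pA_\omega) = i_0$ established, part (a) is immediate from the inequality $\ed_k(\alpha; p) \geqslant \dim_{\bF_p}(A_\omega/pA_\omega)$ in Theorem~\ref{thm.main}, and part (b) from the equality in the strictly Henselian case; the final sentence of (a) is the special case where all $d_j$ are prime to... rather, where $p^r \nmid \det(M) = \prod_j d_j$ forces $p^n \nmid d_r$ hence $i_0 = r$ (using $n \geqslant 1$). The one hypothesis I must be careful to use is that $k$ contains a primitive $p^d$-th root of unity for all $d$: this guarantees the symbols $(a_i, b_i)_{p^n}$ are defined and, more importantly, that $\wedge\nu$ computes the residue/ramification of $\alpha$ correctly in the form stated in Section~\ref{sect.construction_of_nu}. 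The main obstacle is purely bookkeeping: matching the sign and normalization conventions in Definition~\ref{def.contraction} for $A_\omega$ with the "columns of $M$ mod $p^n$" description, and confirming that passing to Smith normal form commutes with the contraction operation (it does, since $A_\omega$ depends only on the sublattice $M\bZ^r \subset \bZ^r$ and $\GL_r(\bZ)$-changes of basis act compatibly on $\bZ^r$ and its dual). Once the identification $A_\omega \cong \bigoplus_j \tfrac{1}{p^n}\bZ/(\tfrac{d_j}{p^n}\bZ + \bZ)$ is nailed down, the dimension count and both parts of the theorem follow formally.
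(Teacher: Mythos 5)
Your proposal is correct and follows essentially the same route as the paper: identify $\bigwedge^2 \bZ^r/p^\infty$ with skew-symmetric matrices so that contraction becomes matrix multiplication, observe $A_\omega = \frac{1}{p^n}M\bZ^r \bmod \bZ^r$, diagonalize via Smith normal form to get $A_\omega \cong \bigoplus_j \bZ/k_j$ with $k_j = p^n/\gcd(p^n,d_j)$, and conclude $\dim_{\bF_p}(A_\omega/p) = i_0$ before invoking Theorem~\ref{thm.main}. One caution on the bookkeeping you deferred: $A_\omega$ is the \emph{image} of $\bZ^r \xrightarrow{\frac{1}{p^n}M} (\frac{1}{p^n}\bZ/\bZ)^r$, not its cokernel $\bZ^r/(p^n\bZ^r + M\bZ^r)$ as you write "equivalently" (these differ, e.g.\ for unimodular $M$), and the $j$-th cyclic summand is $(\frac{d_j}{p^n}\bZ+\bZ)/\bZ \cong \bZ/k_j$ rather than the quotient you display; your stated criterion that the summand is nonzero iff $p^n \nmid d_j$, and hence the final count $i_0$, is nonetheless the right one.
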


The assumption that $\Char k \neq p$ is crucial here. 
If $\Char(k) = p$, then ~\eqref{e.brauer1} is tight; see Proposition~\ref{prop.char-p}. 

As far as we know, Theorem~\ref{thm.brauer} is the first known bound on $\ed_k(\alpha)$ that is stronger than \eqref{e.brauer1}.
There are some lower bounds in the literature on the essential dimension of Brauer classes considered as objects of $\Br_p$, where $\Br_p(K)$ denotes the $p$-torsion subgroup of $\Br(K)$ (see \cite{mckinnie2017essential} and the last paragraph of Section 2 in~\cite{reichstein2010essential}).
When a Brauer class $\alpha$ of exponent $p$ is considered as an object of $\Br_p$, it is only allowed to descend to Brauer classes of exponent $p$. 
When we view $\alpha$ as an object of $\Br$ (as we do in the setting of Theorem~\ref{thm.brauer}), 
it is allowed to descend to a Brauer class of higher exponent, so a priori the essential dimension may drop.

\subsection*{Essential dimension of exceptional groups and the Rost invariant}
A cohomological invariant of an algebraic group $G$ is a morphism of functors:
$$\eta: H^1(\ast, G) \to H^d(\ast,M).$$
Here $M$ is any discrete $\Gal(k)$-module.  We refer the reader to \cite{serre-ci} 
for a detailed discussion of cohomological invariants. 
We will assume that $\eta$ is normalized in the sense of~\cite[Chapter I, 4.5]{serre-ci}, i.e., takes the trivial $G$-torsor to zero. 

As an easy consequence of the definition of essential dimension, we see that
\begin{equation}\label{eq.lowerbound_coh_invariant}
    \ed(G;p)\geqslant  \ed(\gamma; p) \geqslant  \ed(\eta(\gamma);p);
\end{equation}
cf.~\cite[Lemma 2.2]{reichstein2010essential}. Of particular interest to us will be the Rost invariant
\[ R_G \colon H^1(\ast, G) \to H^3(\ast,\bQ_p/\bZ_p(2)) . \]
Recall that the Rost invariant is defined for every semisimple simply connected group $G$; 
see~\cite[Section 31B]{book_involution} or~\cite{merkurjev-ci}.

In Section~\ref{sect.rost} we will use the inequality~\eqref{eq.lowerbound_coh_invariant} in combination with Theorem~\ref{thm.main} to prove the following
lower bounds:
\begin{equation} \label{e.exceptional}
\text{(i)  \;} \ed_k(E_7^{\rm sc};2) \geqslant  7, \quad \quad \text{(ii) \;} \ed_k(E_8;2)\geqslant  9, \quad \quad \text{(iii) \;} \ed_k(E_8; 3)\geqslant  5.
\end{equation}
Here the base field $k$ is assumed to be of characteristic different from $2$ in parts (i) and (ii) and different from $3$ in (iii).
Specifically, we will set $d = 3$, $\eta = R_G$ to be the Rost invariant. The field $F$ will be $F_7$, $F_9$, and $F_5$ in parts (i), (ii) and (iii), 
respectively, where $F = F_n$ is the iterated power series field $F_n = k((t_0))((t_2)) \ldots ((t_{n-1}))$ 
\footnote{In part (iii), we will first adjoin a primitive 3rd root of unity to $k$. 
This is harmless, as enlarging $k$ does not increase the essential dimension.}.


Note that the inequalities~\eqref{e.exceptional} are known. In characteristic $0$ they were first proved in~\cite{reichstein-youssin}. 
In full generality (i) and (ii) were proved in~\cite{chernousov2006lower} and (iii) in~\cite{gille2009lower}. 
Moreover, the arguments used in \cite{chernousov2006lower} and \cite{gille2009lower} show
that there exists an $E_7^{\rm sc}$-torsor $T \to \Spec(F_7)$ such that $\ed_k(T; 2) = 7$, and similarly in parts (ii) and (iii). 
What is new here is that the lower bounds in~\eqref{e.exceptional} can, in fact, be extracted from the Rost invariant of $T$ 
in the following sense.

\begin{theorem} \label{thm.rost} Let $k$ be a base field of characteristic $\neq 2$ and 
$F_n$ be the iterated Laurent series field $F_n = k((t_0))((t_2)) \ldots ((t_{n-1}))$. Then there exist
(i) an $E_7^{\rm sc}$-torsor $T_1 \to \Spec(F_7)$, (ii) an $E_8$-torsor $T_2 \to \Spec(F_9)$, and (iii) an $E_8$-torsor $T_3 \to \Spec(F_5)$
such that (i) $\ed_k(R_{E_7^{\rm sc}}(T_1); 2) = 7$, (ii) $\ed_k(R_{E_8}(T_2); 2) = 9$, and (iii) $\ed_k(R_{E_8}(T_3); 3) = 5$, respectively.

Here in (iii) we are assuming that $k$ contains a primitive $3$rd root of unity. 
\end{theorem}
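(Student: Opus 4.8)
The strategy is to exhibit, for each exceptional group $G$ in the list, an explicit $G$-torsor $T$ over the iterated Laurent series field $F_n$ whose Rost invariant $R_G(T) \in H^3_2(F_n)$ (or $H^3_3(F_5)$ in case (iii)) is a sum of symbols that is "maximally ramified" with respect to the $(t_0, t_2, \ldots, t_{n-1})$-adic valuation $\nu$ on $F_n$, and then apply Theorem~\ref{thm.main} to this cohomology class. Concretely, I would take the well-known torsors from \cite{chernousov2006lower} and \cite{gille2009lower}: for $E_7^{\mathrm{sc}}$ a torsor built from a tensor product of quaternion algebras together with an appropriate orthogonal/spinor datum so that its Rost invariant equals (up to a scalar) the symbol $(t_0, t_1) + (t_2, t_3) + (t_4, t_5) + (t_6, t_\ast)$-type expression living in degree $3$; for $E_8$ at $p = 2$ a torsor whose Rost invariant is a sum of three $3$-fold Pfister-type symbols in the nine variables $t_0, \ldots, t_8$; and for $E_8$ at $p = 3$ the torsor coming from a cyclic algebra construction whose Rost invariant is $(t_0, t_1, t_2) + (t_3, t_4)$-type in degree $3$ with $\bQ_3/\bZ_3(2)$ coefficients. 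The point is that these are precisely the classes whose $\ed(\,\cdot\,; p)$ was computed in loc.\ cit., and we now recompute it via $\wedge\nu$.

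**Key steps, in order.** First, for each case write down the torsor $T$ explicitly and identify its Rost invariant as a specific element $\alpha = R_G(T) \in H^3_p(F_n)$ given as a sum of symbols in the variables $t_i$ — this is extracted directly from the computations in \cite{chernousov2006lower,gille2009lower}, where the nontriviality of $R_G(T)$ (and more) is established. Second, compute $\omega = \wedge\nu(\alpha) \in \bQ_p/\bZ_p \otimes \bigwedge^3 \bZ^n$: since $\nu(t_i) = e_i$ is a standard basis vector, each symbol $(t_i, t_j, t_k)_{p^m}$ contributes $\frac{1}{p^m}\otimes e_i \wedge e_j \wedge e_k$, so $\omega$ is a very concrete element of the exterior algebra. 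Third, compute the associated finite abelian group $A_\omega \subset (\bQ_p/\bZ_p)^n$ from Definition~\ref{def.contraction} and the quantity $\dim_{\bF_p}(A_\omega/pA_\omega)$; for the "disjoint support" symbol sums that arise here this dimension should come out to exactly $7$, $9$, and $5$ respectively, essentially because the $\binom{n}{2}$ or $\binom{n}{3}$ "coordinate blocks" are independent. Fourth, since $F_n$ is an iterated Laurent series field over a field in which $p$ is invertible, it is strictly Henselian for the valuation $\nu$ (its residue field $k$ is separably closed after the harmless enlargement, or one passes to $k^{\mathrm{sep}}$ without changing essential dimension), so the equality clause of Theorem~\ref{thm.main} applies and gives $\ed_k(\alpha; p) = \dim_{\bF_p}(A_\omega/pA_\omega)$ on the nose. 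Finally, combine with $\ed_k(\alpha;p) \le \ed_k(\alpha) \le \ed_k(R_G(T);p)$-type inequalities and \eqref{eq.lowerbound_coh_invariant} to read off the stated values.

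**Main obstacle.** The genuinely delicate step is the second-and-third one done correctly: I must make sure that the Rost invariant of the chosen torsor is written as a sum of symbols \emph{in the variables $t_i$ with the right coefficients and the right support pattern}, because the value of $\dim_{\bF_p}(A_\omega/pA_\omega)$ is sensitive to how the symbols overlap and to the precise torsion orders $p^m$ appearing. This requires care with the normalization of the Rost invariant (its multiplier $n_G$, which is $1$ for $E_8$ but not always for $E_7$), with the passage between $\bQ_p/\bZ_p(2)$ and $\bZ/p^m(2)$ coefficients, and with checking that the residue field really is separably closed so that no "extra" essential dimension hides in the residue. A secondary point is that one must verify $\ed_k(R_G(T);p)$ is not \emph{smaller} than the target — i.e.\ that passing to the iterated power series field does not cause $\alpha$ to descend unexpectedly — but this is exactly what the strict Henselianity and the equality clause of Theorem~\ref{thm.main} guarantee, so once the symbol expression is pinned down the rest is bookkeeping. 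The upper bounds matching $7$, $9$, $5$ (showing these are the exact values, not just lower bounds) follow either from the same Theorem~\ref{thm.main} equality or by citing the matching upper bounds already in \cite{chernousov2006lower,gille2009lower}.
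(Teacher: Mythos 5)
Your plan for parts (i) and (ii) matches the paper's proof: one takes Chernousov's formula $R_{E_7^{\rm sc}}(T_1)=(t_0)_2\cup[(t_1,t_2)_2+(t_3,t_4)_2+(t_5,t_6)_2]$ and Garibaldi's formula $R_{E_8}(T_2)=(t_0)_2\cup[(t_1,t_2)_2+\dots+(t_7,t_8)_2]$, applies $\wedge\nu$ to get $\omega=\frac12\otimes e_0\wedge\sum e_{2i-1}\wedge e_{2i}$, contracts against dual pure wedges to see $A_\omega=\bZ^{2r+1}/2$, and concludes via Theorem~\ref{thm.main_detailed} (this is exactly Remark~\ref{ex.possible_values_of_ed}). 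Two small cautions: the actual symbol expressions are not of the ``disjoint support'' shape you describe (all four $3$-symbols share the slot $t_0$, so $\binom{n}{3}$-block independence is not the right heuristic — one must actually compute $A_\omega$ for this overlapping pattern), and the citations with explicit Rost formulas are \cite{chernousov2003kernel} and \cite{garibaldi2009orthogonal}, not the essential-dimension papers.

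Part (iii) has a genuine gap. There is no explicit expression of $R_{E_8}(T_3)$ as a sum of symbols in the $t_i$ available to ``extract from \cite{gille2009lower}'': Gille's lower bound on $\ed(E_8;3)$ does not go through the Rost invariant, and the expression you propose, ``$(t_0,t_1,t_2)+(t_3,t_4)$-type,'' is not even a homogeneous degree-$3$ class — a symptom of the fact that no such formula is known. The paper's proof of (iii) is therefore structurally different: it builds $T_3$ by pushing forward a $(\bZ/3)^5$-torsor along a self-centralizing embedding $(\bZ/3)^5\hookrightarrow E_8$, proves that $T_3$ remains \emph{anisotropic} after every prime-to-$3$ extension (Lemmas~\ref{lem.aniso} and~\ref{lem.finite_centralizer}), invokes Garibaldi's theorem that over a $3$-special field $R_{E_8}(T)$ is a symbol iff $T$ is isotropic, and then applies Proposition~\ref{prop.ed_non_symbol} (itself a consequence of Theorem~\ref{thm.main_detailed}, but via the ``gap'' statement that a non-symbol in $H^3_p$ of a strictly Henselian field has essential dimension at least $5$) rather than computing $A_\omega$ for a known $\omega$. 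The upper bound $\leqslant 5$ comes from $T_3$ being defined over $k(t_0,\dots,t_4)$. Without either an explicit mod-$3$ Rost formula or this anisotropy/symbol-criterion detour, your third-and-fourth steps cannot be carried out for (iii). A further point you should not wave away: when $k$ is not separably closed, $F_n$ is not strictly Henselian, and one must argue (as the paper does using~\eqref{e.ext_of_scalar} and descent of the torsor to $k(t_0,\dots,t_{n-1})$) that both the lower and the upper bound survive the passage between $k$ and $k^{\sep}$.
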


Our proof of Theorem~\ref{thm.rost} in Section~\ref{sect.rost} relies on the formulas for the Rost invariant, due to  Chernousov~\cite{chernousov2003kernel} and Garibaldi~\cite{garibaldi2009orthogonal}. We also note that the exponential lower bounds on $\ed_k(\Spin_n; 2)$ from~\cite{brv-annals} cannot be recovered by this method; see Remark~\ref{rem.spin}.

\section{Notation and preliminaries}
\label{sect.preliminaries}

\subsection{Essential dimension} 
\label{sect.prel1}
Let $k$ be a base field, $\Fields_k$ be the category of field extensions $K/k$, $\Sets$ be the category of sets, and $\mathcal{F} \colon \Fields_k \to \Set$ be a covariant functor.
Given a field extension $K/k$, we will say that $\alpha\in \mathcal{F}(K)$ \emph{descends} to an intermediate field $k \subseteq K_0 \subseteq K$ if $\alpha$ is in the image of the induced map $\mathcal{F}(K_0) \to \mathcal{F}(K)$.
The essential dimension $\ed_k(\alpha)$ is the smallest integer $d$ such that $\alpha$ descends to a field $k\subset K_0\subset K$ with $\trdeg_k(K_0) = d$.
The essential dimension $\ed_k(\alpha; p)$ of $\alpha$ at a prime $p$ is defined as the minimal value of $\ed_k(\alpha_L)$,
where $L/K$ ranges over field extensions whose degree $[L:K]$ is finite and prime to $p$.

The essential dimension $\ed_k(\mathcal{F})$ (respectively, the essential $p$-dimension $\ed_k(\mathcal{F}; p)$) 
of the functor $\mathcal{F}$ is the supremum of $\ed_k(\alpha)$ (respectively, $\ed_k(\alpha; p)$) taken over all $\alpha\in \mathcal{F}(K)$ with $K$ in $\Fields_k$. Clearly
\begin{equation} \label{e.ed-vs-ed-at-p}
\ed_k(\alpha) \geqslant \ed_k(\alpha; p) \quad \text{and} \quad \ed_k(\mathcal{F}) \geqslant \ed_k(\mathcal{F}; p) 
\end{equation}
for every object $\alpha$ of $\mathcal{F}$ and every prime $p$.

We also remark that if $f \colon \mathcal{F} \to \mathcal{G}$ is a morphism of functors from $\Fields_k$ to $\Sets$, then
\begin{equation} \label{e.morphism} \ed_k(\alpha) \geqslant \ed_k(f(\alpha)) \quad \text{and} \quad \ed_k(\alpha; p) \geqslant 
\ed_k(f(\alpha); p)
\end{equation}
for every object $\alpha$ of $\mathcal{F}$; cf.~\cite[Lemma 2.2]{reichstein2010essential}.
If $k \subset k' \subset L \subset L'$ are fields with $k'/k$ algebraic and $\alpha \in \cF(L)$, then
\begin{equation} \label{e.ext_of_scalar} 
\ed_k(\alpha; p) \leqslant \ed_{k}(\alpha_{L'};p) \quad \text{and} \quad \ed_{k}(\alpha;p)= \ed_{k'}(\alpha;p).
\end{equation}
Here the inequality on the left follows from the fact that any prime-to-$p$ extension of $L$ embeds into a prime-to-$p$ extension of $L'$, see \cite[Lemma 6.1]{merkurjev2009essential}. The equality on the right follows from the fact that
$\trdeg_k(L_0) = \trdeg_{k'}(L_0)$ for any intermediate field $k \subset k' \subset L_0 \subset L'$.

\subsection{The norm residue isomorphism} 
\label{sect.prel2}

Let $F$ be a field over $k$ and let $p$ be a prime different from $\Char k$. For any integer $d\in \bN$ we set 
$$\bQ_p/\bZ_p(d) := \underset{r\in \bN}{\colim} \ \bZ/p^r (d);$$
see \cite[Definition 7.3.6]{neukirch2013cohomology}.
Here the colimit is taken relative to the maps $\bZ/p^r \to \bZ/p^{s}$ given by multiplication by ${p^{s-r}}$, where $s \geqslant r \geqslant 0$. Note that $\bQ_p/\bZ_p(1)$ is isomorphic to the $p$-primary part of the group of roots of unity in $k^{\sep}$. 
Set 
$$H_p(F) := \bigoplus_{d\in \bN} H^d(F, \bQ_p/\bZ_p(d)).$$
This abelian group is naturally graded and functorial in $F$. For any $a_1,\dots,a_d\in F^*$ and $n\in\bN$, the Kummer map gives cohomology classes $(a_i)_{p^{n}} \in H^1(F,\mu_{p^n}) \cong F^*/F^{*p^n}$. The cup product of these classes defines a class $(a_1,\dots,a_d)_{p^n} \in H^d_p(F)$ which is called a symbol of degree $d$. The norm residue isomorphism theorem gives a simple presentation for $H_p(F)$ in terms of symbols.

\begin{theorem}\label{thm.norm}
Let $K(F)$ denote the Milnor K-theory of a field $F$. For any prime $p$ different from $\Char F$, there is an isomorphism of abelian graded groups $\bQ_p/\bZ_p\otimes K(F) \overset{h}{\to} H_p(F)$ given on generators by:
$$h(\frac{1}{p^n}\otimes \{a_1,\dots,a_d\}) = (a_1,\dots,a_n)_{p^n}$$
\end{theorem}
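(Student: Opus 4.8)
The plan is to obtain this as a repackaging, in the coefficient system $\bQ_p/\bZ_p(d)$, of the norm residue isomorphism theorem (the Bloch--Kato conjecture, proved by Voevodsky, Rost, Weibel and their collaborators). Concretely, the input I would cite is: for every field $F$ with $\Char F \neq p$, every $d \geqslant 0$ and every $n \geqslant 1$, the norm residue homomorphism
$$R_{n,d} \colon K_d(F)/p^n K_d(F) \longrightarrow H^d(F, \mu_{p^n}^{\otimes d}), \qquad \{a_1, \dots, a_d\} \mapsto (a_1)_{p^n} \cup \dots \cup (a_d)_{p^n},$$
is an isomorphism, where $K_d(F)$ is the degree-$d$ part of Milnor $K$-theory. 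Since $\bZ/p^n(d) := \mu_{p^n}^{\otimes d}$ by definition, the target of $R_{n,d}$ is exactly $H^d(F, \bZ/p^n(d))$. The honest statement is that \emph{this} is the deep part, taken as a black box; everything below is bookkeeping.

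Next I would realize $h$ as a filtered colimit of the maps $R_{n,d}$. Both sides of $h$ split as direct sums over the degree $d$: by construction $\bQ_p/\bZ_p \otimes K(F) = \bigoplus_d \bigl(\bQ_p/\bZ_p \otimes K_d(F)\bigr)$ and $H_p(F) = \bigoplus_d H^d(F, \bQ_p/\bZ_p(d))$, so it suffices to produce an isomorphism in each fixed degree $d$. Unwinding $\bQ_p/\bZ_p = \colim_n \bZ/p^n$ (with transition maps given by multiplication by $p$) and using that $- \otimes K_d(F)$ commutes with colimits, one gets $\bQ_p/\bZ_p \otimes K_d(F) = \colim_n K_d(F)/p^n K_d(F)$, the transition map $K_d(F)/p^n \to K_d(F)/p^{n+1}$ being multiplication by $p$. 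On the cohomology side, $\bQ_p/\bZ_p(d) = \colim_n \mu_{p^n}^{\otimes d}$ with transition maps the inclusions $\mu_{p^n}^{\otimes d} \hookrightarrow \mu_{p^{n+1}}^{\otimes d}$, and since cohomology of the profinite group $\Gal(F^{\sep}/F)$ commutes with filtered colimits of discrete modules, $H^d(F, \bQ_p/\bZ_p(d)) = \colim_n H^d(F, \mu_{p^n}^{\otimes d})$.

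The only point that genuinely requires an argument — and the step I would flag as the main (if modest) obstacle — is that the $R_{n,d}$ form a morphism of these two direct systems, i.e.\ that applying $R_{n+1,d}$ to $p \cdot x$ yields the change-of-coefficients image of $R_{n,d}(x)$. For $d = 1$ this is a short computation with Kummer cocycles: if $\alpha^{p^n} = a$ then the same $\alpha$ satisfies $\alpha^{p^{n+1}} = a^p$, and the cocycle $\sigma \mapsto \sigma(\alpha)/\alpha$ already takes values in $\mu_{p^n} \subset \mu_{p^{n+1}}$, which is exactly the asserted compatibility. The general case then follows since $R_{n,d}$ is multiplicative as displayed above and the inclusions $\mu_{p^n}^{\otimes d} \hookrightarrow \mu_{p^{n+1}}^{\otimes d}$ respect cup products. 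Granting this, passing to the colimit over $n$ produces an isomorphism $\colim_n R_{n,d} \colon \bQ_p/\bZ_p \otimes K_d(F) \longrightarrow H^d(F, \bQ_p/\bZ_p(d))$ which by construction sends $\tfrac{1}{p^n} \otimes \{a_1, \dots, a_d\}$ to the class of $(a_1, \dots, a_d)_{p^n}$; this colimit description simultaneously shows that the recipe in the statement is well defined. Summing over $d$ gives the graded isomorphism $h$.
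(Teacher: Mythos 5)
Your overall strategy coincides with the paper's: quote the mod-$p^n$ norm residue isomorphism, check that the Galois symbols are compatible with the transition maps in $n$, and pass to the colimit (that compatibility is the paper's Lemma~\ref{lem.pow}). However, the step you yourself single out as the crux is exactly where your argument breaks down for $d \geqslant 2$. The transition map $\bZ/p^n(d) \to \bZ/p^{n+1}(d)$ in the colimit defining $\bQ_p/\bZ_p(d)$ is multiplication by $p$ on the underlying cyclic group. Under the identification $\bZ/p^n(d) \cong \mu_{p^n}^{\otimes d}$, this is \emph{not} the $d$-th tensor power $\iota^{\otimes d}$ of the inclusion $\iota \colon \mu_{p^n} \hookrightarrow \mu_{p^{n+1}}$: since $\zeta_{p^n} = \zeta_{p^{n+1}}^{p}$, the map $\iota^{\otimes d}$ sends the generator $\zeta_{p^n}^{\otimes d}$ to $p^d \cdot \zeta_{p^{n+1}}^{\otimes d}$, i.e., it is multiplication by $p^d$. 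For $d \geqslant 2$ that map is not even injective, and the colimit of the system $(\mu_{p^n}^{\otimes d}, \iota^{\otimes d})$ is zero, not $\bQ_p/\bZ_p(d)$. Consequently your reduction to $d=1$ (``$R_{n,d}$ is multiplicative and the inclusions respect cup products'') proves the wrong identity: applying the $d=1$ compatibility to each cup factor yields $\iota^{\otimes d}_*(R_{n,d}(x)) = p^d\, R_{n+1,d}(x)$, whereas what is needed is $\tau_*(R_{n,d}(x)) = p\, R_{n+1,d}(x)$ for the genuine transition map $\tau$, with a single factor of $p$.

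To repair this you must work with $\tau$ itself, which forces you to trivialize the Tate twists coherently --- this is precisely what the fixed compatible system of roots of unity $\zeta_{p^n}$ is for. Writing each Kummer class $\partial_{p^{n+1}}(a_i)$ as a $\bZ/p^{n+1}$-valued cocycle $\chi_i$ via $\zeta_{p^{n+1}}$, one checks that $(a_1,\dots,a_d)_{p^n}$ is represented by $\prod_i \chi_i(\sigma_i) \bmod p^n$, and that pushing forward along multiplication by $p$ gives $p\prod_i\chi_i(\sigma_i) \bmod p^{n+1}$, i.e., $p\,(a_1,\dots,a_d)_{p^{n+1}}$; this cocycle computation is the content of the paper's Lemma~\ref{lem.pow}. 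Your $d=1$ verification and the remaining colimit bookkeeping (the decomposition by degree, $\bQ_p/\bZ_p \otimes K_d(F) = \colim_n K_d(F)/p^n$, commutation of Galois cohomology with filtered colimits, and the passage from levelwise isomorphisms to an isomorphism in the colimit) are all correct.
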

Theorem~\ref{thm.norm} is equivalent to the standard formulation of the norm residue isomorphism theorem. We include an explanation of the equivalence in the appendix.

\begin{remark}
    The norm residue isomorphism theorem is notoriously difficult to prove for general fields. For strictly Henselian fields it is much simpler. In this setting it is an easy consequence of~\cite[Theorem 2.6]{elman1982arason}; see \cite[Corollary 3.13]{wadsworth1983henselian}. To simplify the exposition, we will appeal to the norm residue theorem over general fields. Some readers may prefer to pass to the strict Henselization of a field before applying the norm residue isomorphism theorem. Doing this repeatedly will show that the proofs of our main results only require the norm residue isomorphism theorem over strictly Henselian fields.
\end{remark}

\subsection{Notational conventions}
\label{sect.notational-conventions}
Throughout this paper $k$ will denote a base field and $k^{\rm sep}$ will denote the separable closure of $k$.

Let $A$ be an abelian group. For any positive integer $m$, we set $A/m := A/mA$. 
We will write elements of $A/m$ as $a \mod m$, where $a\in A$. 
If $p$ is a prime, we will write 
$$A/p^\infty := \bQ_p/\bZ_p \otimes_\bZ A = \colim_{n\in \bN} A/p^n.$$

We denote the $n$-th exterior product of $A$ by $\bigwedge^n A$. When $A$ is a finitely-generated free $\bZ$ or $\bZ/m$-module and $e_1, \ldots, e_d$ is a basis of $A$, $\bigwedge^n A$ is also a free module generated by the ``pure wedges" $e_{i_1} \wedge \ldots \wedge e_{i_n}$ for some 
$1 < i_1 < \ldots < i_n \leqslant d$.

Note that there is a canonical isomorphism between $(\bigwedge^n A)/m$ and $\bigwedge^n (A/m)$. We will identify these groups and simply write $\bigwedge^n A/m$. 
The group $\bigwedge A/p^\infty$ is $(\bigwedge A)/p^\infty$ and not $\bigwedge (A/p^\infty)$ (the latter is the trivial group).

We fix a compatible system of roots of unity $\zeta_{m} \in k^{\sep}$ for all $m$ different from $\Char(k)$.
That is, $\zeta_{m_1 m_2}^{m_1} = \zeta_{m_2}$.
This is equivalent to fixing compatible 
isomorphisms $\mu_{m}\to \bZ/m(1)$, where $\bZ/m(1)$ is the Tate twist of $\bZ/m$; see 
\cite[Definition 7.3.6]{neukirch2013cohomology}. 

A \emph{valued field over} $k$ is a field $F$ equipped with a valuation
$\nu \colon F^* \to \mathbb Z^r$ such that $k\subset F$, $\nu(k^*) = 0$. We will call $(F,\nu)$ \emph{strictly Henselian} if it satisfies Hensel's Lemma and its residue field is separably closed. We will need to consider the \emph{Henselization} and the \emph{strict Henselization} of a valued field. We refer the reader to \cite[Appendix A.3]{tignol2015value} for the definition and properties of the Henselization. The strict Henselization of a valued field is the \emph{inertial closure} of its Henselization (also called its maximal unramified extension) \cite[Definition A.20]{tignol2015value}. We will frequently use the fact that passing to the (strict) Henselization of a valued field does not change the value group~\cite[Corollary A.28]{tignol2015value}. 

An important example of a Henselian valuation is the $(t_0,\dots,t_{n-1})$-adic valuation
$\nu: F_n^*\to \bZ^{n}$ on the field of iterated Laurent series $F_n = k((t_0))\dots((t_{n-1}))$, where
$n$ is a positive integer; see \cite[Example A.16]{tignol2015value}. 
Here $\bZ^{n}$ is given the reverse lexicographic ordering with respect to the standard basis $e_0,\dots,e_{n-1}$ and
$\nu(t_i) = e_i$ for any $0\leq i\leq n-1$. The residue field of $(F_n,\nu)$ is naturally identified with $k$. In particular, $F_n$ is strictly Henselian if $k=k^{\sep}$.

We will denote the $d$-th Galois cohomology group of a $\Gal(F)$-module $M$ by $H^d(F,M)$.


\section{The homomorphism \texorpdfstring{$\wedge \nu$}{nu} }\label{sect.construction_of_nu}
Let $(F,\nu)$ be a valued field over $k$. That is, $F$ contains the base field $k$ and $\nu_{| k} = 0$ (remember that $\Char k \neq p$).
Denote the value group $\nu F$ by $\Gamma$.
Our first goal is to construct the homomorphism $H_p(F) \to \bigwedge  \Gamma / p^\infty $. 
Our starting point is the following homomorphism.

\begin{proposition}\label{prop.existenceofwedgenu}
There is a well-defined homomorphism of graded algebras:
$$\wedge' \nu : K(F) \to  \bigwedge  \Gamma,\ \ \{a_1,\dots,a_d\} \mapsto \nu(a_1)\wedge\dots\wedge\nu(a_d).$$
\end{proposition}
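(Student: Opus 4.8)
The plan is to use the universal property of Milnor $K$-theory. Recall that $K(F)$ is the quotient of the tensor algebra $T(F^*)$ on the abelian group $F^*$ by the two-sided ideal generated by the Steinberg relations $a \otimes (1-a)$ for $a \in F^* \setminus \{1\}$. So to produce a homomorphism of graded algebras $\wedge' \nu \colon K(F) \to \bigwedge \Gamma$, it suffices to produce a homomorphism of abelian groups $F^* \to \Gamma = \bigwedge^1 \Gamma$ — namely $\nu$ itself — and then check that the induced algebra homomorphism $T(F^*) \to \bigwedge \Gamma$ kills every Steinberg element $a \otimes (1-a)$.

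The first step is thus purely formal: $\nu \colon F^* \to \Gamma$ is a group homomorphism by definition of a valuation, and $\bigwedge \Gamma$ is an associative ($\bZ$-)algebra, so by the universal property of the tensor algebra there is a unique algebra homomorphism $T(F^*) \to \bigwedge \Gamma$ sending $a \mapsto \nu(a)$. The second step — the only one with any content — is to verify that $\nu(a) \wedge \nu(1-a) = 0$ in $\bigwedge^2 \Gamma$ for every $a \in F^*$ with $a \neq 1$. This is a standard computation with valuations: using the ultrametric inequality one shows that $\nu(a)$ and $\nu(1-a)$ are always $\bZ$-linearly dependent in $\Gamma$, in fact one of them is forced to vanish or they have the same sign in a strong sense. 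Concretely: if $\nu(a) > 0$ then $\nu(1-a) = 0$; if $\nu(a) < 0$ then $\nu(1-a) = \nu(a)$; and if $\nu(a) = 0$ there is nothing to prove for the first factor unless $\nu(1-a) \neq 0$, in which case $\nu(a) = 0$ again kills the wedge. (One has to be a little careful because $\Gamma = \bZ^r$ is only partially... actually totally ordered here, but the usual case analysis via $\nu(x+y) \geqslant \min(\nu(x),\nu(y))$, with equality when $\nu(x) \neq \nu(y)$, goes through.) In every case $\nu(a) \wedge \nu(1-a) = 0$ because two proportional vectors wedge to zero (and $v \wedge v = 0$ in the exterior algebra).

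The main obstacle — such as it is — is simply making the valuation-theoretic case analysis airtight: one must confirm that $\nu(a)$ and $\nu(1-a)$ are always $\bZ$-linearly dependent, handling the subcase where $\nu(a) = 0$ but the residue $\bar a = 1$ (so $\nu(1-a)$ can be anything positive) and the subcase $\nu(a) = 0$, $\bar a \neq 1$ (so $\nu(1-a) = 0$). All subcases collapse the wedge, so the Steinberg relations are satisfied and $\wedge' \nu$ is well-defined. That it respects the grading and the algebra structure is immediate from the construction, since it is induced from an algebra map on $T(F^*)$ and $F^*$ sits in degree $1$. This completes the proof.
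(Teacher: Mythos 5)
Your proposal is correct and follows essentially the same route as the paper: extend $\nu$ multilinearly to the tensor algebra (the paper phrases this as multilinearity factoring through $(F^*)^{\otimes d}$, you invoke the universal property of $T(F^*)$ — the same thing), then verify the Steinberg relation via the standard three-case ultrametric analysis ($\nu(a)=0$, $\nu(a)>0$ forcing $\nu(1-a)=0$, and $\nu(a)<0$ forcing $\nu(1-a)=\nu(a)$), exactly as in the paper. In every case one factor vanishes or the two factors coincide, so the wedge is zero outright; no appeal to torsion-freeness of $\bigwedge\Gamma$ is even needed.
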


Our proof will make use of the fact that 
\begin{equation} \label{e.lang} \text{If $\nu(a) > \nu(b)$, then $\nu(a+b)=\nu(b)$;}
\end{equation}
see, e.g., \cite[p.~481]{lang2002algebra}.

\begin{proof}
Since the function $(F^*)^d \to \bigwedge^d  \Gamma, \ (a_1,\dots,a_d)\mapsto \nu(a_1)\wedge\dots\wedge\nu(a_d)$ is $\bZ$-multilinear, it factors through the tensor product to give homomorphisms:
$$(F^*)^{\otimes d} \to  \bigwedge^d  \Gamma,$$
for all $d\in \bN$. To check that these homomorphisms through $K(F)$ we need to verify 
the Steinberg relation, 
$$\nu(a)\wedge \nu(1-a) = 0, $$ 
for every $a\in F^*$. Consider three cases. 

\smallskip
Case 1. If $\nu(a) = 0$, then $\nu(a) \wedge \nu(1-a) = 0$ by bilinearity.

\smallskip
Case  2: If $\nu(a) > 0$, then $\nu(1 -a) = \nu(1) = 0$; see~\eqref{e.lang}. Hence, $\nu(a)\wedge \nu(1-a) = 0$.

\smallskip
Case 3. If $\nu(a) < 0$ then $\nu(1-a) = \nu(a)$ by \eqref{e.lang}. In this case, $\nu(a)\wedge \nu(1-a) = 0$ by anti-symmetry.
\end{proof}


\begin{remark}\label{rem.scalars}
It is clear from the definition of $\wedge' \nu$ that it is functorial. For any extension of valued fields $(F,\nu) \subset (\tilde{F},\tilde{\nu})$ and any element $\alpha \in K(F)$ we have
$$\wedge'(\tilde{\nu})(\alpha_{\tilde{F}}) = \wedge'\nu(\alpha),$$
under the natural identification of $\bigwedge \nu F$ with a subring of $\bigwedge \tilde{\nu} \tilde{F}$.
\end{remark}

Combining Proposition~\ref{prop.existenceofwedgenu} with Theorem~\ref{thm.norm}, we obtain: 

\begin{corollary}\label{cor.wedge.symbol}
There exists a well-defined homomorphism of graded abelian groups $\wedge\nu: H_p(F) \to  \bigwedge  \Gamma / p^\infty $ given on symbols by:
$$\wedge\nu(a_1,\dots,a_d)_{p^n} = \frac{1}{p^n} \otimes \nu(a_1)\wedge\dots\wedge \nu(a_d). $$
\end{corollary}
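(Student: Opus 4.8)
The plan is to combine the two ingredients already in place: Proposition~\ref{prop.existenceofwedgenu} gives a homomorphism of graded algebras $\wedge'\nu\colon K(F)\to\bigwedge\Gamma$, and Theorem~\ref{thm.norm} gives the norm residue isomorphism $h\colon \bQ_p/\bZ_p\otimes K(F)\xrightarrow{\sim}H_p(F)$. Tensoring $\wedge'\nu$ with the identity on $\bQ_p/\bZ_p$ produces a homomorphism $\id\otimes\wedge'\nu\colon \bQ_p/\bZ_p\otimes K(F)\to \bQ_p/\bZ_p\otimes\bigwedge\Gamma=\bigwedge\Gamma/p^\infty$, and I would define
$$\wedge\nu := (\id\otimes\wedge'\nu)\circ h^{-1}\colon H_p(F)\to \bigwedge\Gamma/p^\infty.$$
Since $h$ is an isomorphism of graded abelian groups and $\id\otimes\wedge'\nu$ is a homomorphism of graded abelian groups (tensoring over $\bZ$ is exact and respects the grading), the composite is a well-defined homomorphism of graded abelian groups, which is all that is claimed.

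Next I would check the formula on symbols. By definition, the symbol $(a_1,\dots,a_d)_{p^n}\in H^d_p(F)$ is the image under $h$ of $\tfrac{1}{p^n}\otimes\{a_1,\dots,a_d\}$; this is exactly the content of the generator formula in Theorem~\ref{thm.norm}. Applying $h^{-1}$ and then $\id\otimes\wedge'\nu$, and using Proposition~\ref{prop.existenceofwedgenu} which sends $\{a_1,\dots,a_d\}\mapsto\nu(a_1)\wedge\dots\wedge\nu(a_d)$, we get
$$\wedge\nu\bigl((a_1,\dots,a_d)_{p^n}\bigr)=\frac{1}{p^n}\otimes\nu(a_1)\wedge\dots\wedge\nu(a_d),$$
as required. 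One small point to be careful about is that $h^{-1}$ of a symbol need not be literally $\tfrac{1}{p^n}\otimes\{a_1,\dots,a_d\}$ on the nose — it equals it because $h$ is injective and $h$ sends that element to the symbol — so I would phrase this as ``$h\bigl(\tfrac{1}{p^n}\otimes\{a_1,\dots,a_d\}\bigr)=(a_1,\dots,a_d)_{p^n}$ and $h$ is an isomorphism.''

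There is essentially no hard step here; the only thing requiring a line of justification is that tensoring the graded-algebra homomorphism $\wedge'\nu$ with $\bQ_p/\bZ_p$ still lands in the intended target, i.e.\ that $\bQ_p/\bZ_p\otimes\bigwedge\Gamma$ is the group denoted $\bigwedge\Gamma/p^\infty$ in the notational conventions of Section~\ref{sect.notational-conventions} — this is exactly the convention $\bigwedge A/p^\infty := (\bigwedge A)/p^\infty = \bQ_p/\bZ_p\otimes_\bZ\bigwedge A$ recorded there, so no work is needed beyond citing it. Functoriality of $\wedge\nu$ in $(F,\nu)$, if wanted, follows formally from the functoriality of $h$ and of $\wedge'\nu$ (Remark~\ref{rem.scalars}). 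I would therefore keep the proof to a short paragraph: define $\wedge\nu=(\id\otimes\wedge'\nu)\circ h^{-1}$, note it is a homomorphism of graded groups as a composite of such, and verify the symbol formula by tracing a symbol back through $h$ and forward through $\wedge'\nu$.
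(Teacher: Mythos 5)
Your proposal is correct and is exactly the argument the paper intends: the paper's ``proof'' is the single line ``Combining Proposition~\ref{prop.existenceofwedgenu} with Theorem~\ref{thm.norm}, we obtain,'' and your definition $\wedge\nu=(\id\otimes\wedge'\nu)\circ h^{-1}$ together with the check on symbols is precisely how that combination works. No gaps.
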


\section{First properties of $\wedge \nu$}

Let $(F,\nu)$ be a valued field over $k$ as in the previous section.
In this section we will explore some basic properties of $\wedge\nu$. To this end, it is convenient to choose a uniformizing parameter for $\nu$.
\begin{definition}
    A left inverse $\pi \colon \Gamma \to F^*$ to $\nu \colon F^* \to 
    \Gamma$ will be called a \emph{uniformizing parameter}.
    Since the group operation in $\Gamma$ is written additively while $F^*$ is written multiplicatively, it will be convenient for us to use the exponential notation $\pi^{\gamma}$ in place of $\pi(\gamma)$, for any
    $\gamma\in  \Gamma$.
\end{definition}
Note that a uniformizing parameter always exists because $\Gamma$ is a free abelian group.
From now on we will fix a uniformizing parameter $\pi$ for $\nu$. Since elements in $H^1(F,\bZ/{p^n})$ anti-commute for any $n$, $\pi$ induces a left inverse $s_\pi : \bigwedge  \Gamma / p^\infty  \to H_p(F)$ given on generators by:
$$s_\pi(\frac{1}{p^n}\otimes \gamma_1 \wedge \dots \wedge \gamma_d) \mapsto (\pi^{\gamma_1},\dots,\pi^{\gamma_d})_{p^n}.$$
We call the image of $s_\pi$ the group of $\pi$-monomial classes. 
\begin{definition}\label{def.monomial}
    An element $\alpha \in H_p(F)$ will be called \emph{monomial} if it is of the form $\alpha = s_\pi(\omega)$ for some uniformizing parameter $\pi$ and class $\omega \in \bigwedge \Gamma/p^\infty$.
\end{definition}
It is important to keep in mind that any information captured by $\wedge\nu$ is contained in the subgroup of $\pi$-monomial classes, which is a small part of $H_p(F)$. Moreover, $\wedge\nu$ allows us to split $H_p(F)$ as a direct product of a divisible subgroup and the kernel of $\wedge\nu$. The following lemma gives a convenient generating set for this kernel. 

\begin{lemma}\label{lem.ker1}
    The kernel of $\wedge\nu : H_p^d(F) \to \bigwedge^d  \Gamma/ p^\infty $ is generated by symbols $(a_1,\dots,a_d)_{p^n}$ with $\nu(a_1) = 0$.
\end{lemma}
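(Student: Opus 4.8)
The plan is to show that every class in $\ker(\wedge\nu)$ can be rewritten, modulo the obvious subgroup generated by symbols with a unit first slot, as a combination of such symbols, and conversely that such symbols lie in the kernel. One direction is trivial: if $\nu(a_1) = 0$ then $\wedge\nu(a_1,\dots,a_d)_{p^n} = \frac{1}{p^n}\otimes \nu(a_1)\wedge\cdots\wedge\nu(a_d) = 0$ by multilinearity, so every such symbol is in the kernel, and hence so is the subgroup $N$ they generate. The content is the reverse inclusion $\ker(\wedge\nu) \subseteq N$.

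For the reverse inclusion, I would first reduce an arbitrary symbol to a ``$\pi$-normal form''. Fix the uniformizing parameter $\pi$. For any $a \in F^*$, write $a = u\,\pi^{\nu(a)}$ where $u = a\,\pi^{-\nu(a)}$ is a unit (i.e.\ $\nu(u) = 0$). Using bilinearity of the symbol in each slot, expand $(a_1,\dots,a_d)_{p^n}$ with each $a_i$ replaced by $u_i\,\pi^{\nu(a_i)}$ into a sum of $2^d$ symbols, each of whose slots is either a unit $u_i$ or a power $\pi^{\nu(a_i)}$. The terms that contain at least one unit slot can, after permuting slots (which only changes the sign, so stays inside $N$), be written with a unit in the first slot, hence lie in $N$. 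The only term not obviously in $N$ is the ``pure $\pi$-monomial'' term $(\pi^{\nu(a_1)},\dots,\pi^{\nu(a_d)})_{p^n} = s_\pi\bigl(\tfrac{1}{p^n}\otimes \nu(a_1)\wedge\cdots\wedge\nu(a_d)\bigr)$. Applying this to every symbol appearing in an arbitrary class $\alpha \in H^d_p(F)$ (which is a finite sum of symbols by Theorem~\ref{thm.norm}), we conclude $\alpha \equiv s_\pi(\omega) \pmod{N}$ where $\omega = \wedge\nu(\alpha) \in \bigwedge^d \Gamma/p^\infty$; here I use that $s_\pi \circ \wedge\nu$ agrees with this ``pure $\pi$-part'' construction on symbols, together with additivity.

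Now if $\alpha \in \ker(\wedge\nu)$, then $\omega = \wedge\nu(\alpha) = 0$, so $s_\pi(\omega) = 0$ and therefore $\alpha \in N$, as desired. This completes the argument. The one point requiring a little care — and the step I expect to be the main (minor) obstacle — is the bookkeeping in the binary expansion: making sure that the sign changes from reordering slots are harmless (they are, since $N$ is a subgroup closed under negation and the reordered symbol still has a unit in some slot, hence after a further transposition a unit in the first slot), and that symbols with a repeated or degenerate slot are handled correctly (a symbol $(\dots,\pi^{\gamma},\dots,\pi^{\gamma},\dots)$ with two equal exponent-slots is killed anyway, and in the unit-slot terms one simply commits to putting one chosen unit slot first). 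Everything else is a direct computation with the presentation of $H_p(F)$ from Theorem~\ref{thm.norm} and the definitions of $\wedge\nu$ and $s_\pi$.
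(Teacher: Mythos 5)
Your proposal is correct and follows essentially the same route as the paper: both reduce an arbitrary symbol modulo the subgroup $N$ (the paper calls it $U$) to its pure $\pi$-monomial part and then conclude using that $s_\pi$ is a section of $\wedge\nu$; the paper merely peels off one slot at a time by multiplicativity rather than expanding all $2^d$ terms at once. The bookkeeping points you flag (signs and repeated slots) are disposed of in the paper the same way, via anti-symmetry.
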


\begin{proof}
    Let $U \subset H_p^d(F)$ be the subgroup generated by symbols $(a_1,\dots,a_d)_{p^n}$ where $\nu(a_1)=0$. It is clear that $\wedge \nu(U) =0$. Note that any symbol $s= (a_1,\dots,a_d)_{p^n} \in H_p^d(F)$ is equivalent to a $\pi$-monomial class modulo $U$. Indeed, if we denote:
    $$ u_1 = (a_{1} \cdot \pi^{-\nu(a_{1})},a_{2},\dots,a_{d})_{p^n} \in U,$$
    then by multi-multiplicativity:
    $$ s-u_1 = (\pi^{\nu(a_1)},a_2,\dots,a_d)_{p^n} \in U$$
    Proceeding iteratively and using anti-symmetry we find elements $u_2,\dots,u_d \in U$ such that:
    $$ s- u_1 -\dots - u_d = (\pi^{\nu(a_1)},\dots,\pi^{\nu(a_d)})_{p^n}.$$
    Thus for any element $\alpha \in  H_p^d(F)$ mapped to $0$ under $\wedge\nu$ we can find an element $u\in U$ such that $\alpha - u$ is $\pi$-monomial. Since $\wedge\nu(\alpha)=\wedge\nu(u)=0$ and $s_\pi$ is a section of $\wedge\nu$ this implies:
    $$\alpha -u = s_\pi(\wedge\nu (\alpha -u)) =  0.$$
    Therefore $\alpha \in U$, as we wanted to show.
\end{proof}

Let $(\hat{F},\nu)$ be the strict Henselization of $(F,\nu)$. The passage to the Henselization may be viewed as a process of localization with respect to $\nu$. The next corollary shows $\wedge\nu$ gives a local description of $H_p(F)$. It can be easily deduced from a theorem of Wadsworth together with the norm-residue isomorphism theorem for strictly Henselian fields \cite[Proposition 2.1]{wadsworth1983henselian}. In \cite{brussel2001division}, Brussel used a similar description to study the Brauer group of a strictly Henselian field; see also \cite[Chapter 6]{tignol2015value}.

\begin{corollary}\label{cor.ker2}
    Denote by $H_p(\hat{F}/F) \subset H_p(F)$ the subgroup of elements split by $\hat{F}$. There is an exact sequence:
    $$ 0 \to H_p(\hat{F}/F) \to H_p(F) \overset{\wedge\nu}{\to}  \bigwedge \Gamma / p^\infty \to 0.$$
    In particular, if $(F,\nu)$ is strictly Henselian, then $\wedge\nu$ is an isomorphism.
\end{corollary}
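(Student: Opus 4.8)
The plan is to produce the exact sequence by combining Lemma~\ref{lem.ker1} with a classical theorem of Wadsworth on the cohomology (or Brauer group) of a strictly Henselian valued field. First I would treat the surjectivity of $\wedge\nu$: since $s_\pi$ is a section of $\wedge\nu$ (as noted right after Definition~\ref{def.monomial}), the map $\wedge\nu$ is split surjective for free, with no hypotheses on $F$. So the only content is the identification of the kernel with $H_p(\hat F/F)$. By Lemma~\ref{lem.ker1}, $\ker(\wedge\nu)$ is generated by symbols $(a_1,\dots,a_d)_{p^n}$ with $\nu(a_1)=0$; I would show each such symbol is split by $\hat F$ and, conversely, that every class split by $\hat F$ lies in this kernel.

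For the inclusion $H_p(\hat F/F) \subseteq \ker(\wedge\nu)$: the value group does not change under strict Henselization (\cite[Corollary A.28]{tignol2015value}), and $\wedge\nu$ is functorial (Remark~\ref{rem.scalars}, applied after Corollary~\ref{cor.wedge.symbol}), so if $\alpha_{\hat F}=0$ then $\wedge\hat\nu(\alpha_{\hat F})=0$, and this equals the image of $\wedge\nu(\alpha)$ under the identity $\bigwedge\Gamma/p^\infty = \bigwedge\hat\Gamma/p^\infty$; hence $\wedge\nu(\alpha)=0$. For the reverse inclusion I would invoke \cite[Proposition 2.1]{wadsworth1983henselian}: over the strict Henselization, a symbol with a unit entry $a_1$ (i.e.\ $\nu(a_1)=0$) becomes a symbol $(\bar a_1,\bar a_2',\dots)$ over the separably closed residue field after an appropriate manipulation — more precisely, Wadsworth's description shows $H_p(\hat F) \cong \bigwedge\Gamma/p^\infty$ with the residue-field cohomology vanishing because the residue field is separably closed and $\Char\ne p$ (here one uses the norm residue isomorphism theorem over the residue field, or its easy strictly Henselian version via \cite{elman1982arason}, \cite{wadsworth1983henselian}). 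Thus a generating symbol of $\ker(\wedge\nu)$ maps to $0$ in $H_p(\hat F)$, so it is split by $\hat F$; since these symbols generate the kernel, $\ker(\wedge\nu)\subseteq H_p(\hat F/F)$.

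Combining the two inclusions gives $\ker(\wedge\nu)=H_p(\hat F/F)$, which together with the split surjectivity of $\wedge\nu$ yields the asserted short exact sequence $0 \to H_p(\hat F/F) \to H_p(F) \xrightarrow{\wedge\nu} \bigwedge\Gamma/p^\infty \to 0$. The final assertion is immediate: if $(F,\nu)$ is already strictly Henselian then $\hat F = F$, so $H_p(\hat F/F)=0$ and $\wedge\nu$ is an isomorphism (with inverse $s_\pi$, independent of the choice of $\pi$ on the nose since the map is bijective).

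The main obstacle I anticipate is making the appeal to Wadsworth's theorem precise: one must match the formulation in \cite[Proposition 2.1]{wadsworth1983henselian} (phrased for graded Brauer groups / cohomology of Henselian fields in terms of the residue cohomology and characters of the value group) with the statement that $H_p(\hat F)\cong\bigwedge\Gamma/p^\infty$ via $\wedge\nu$, and to confirm that the residue-cohomology contribution genuinely vanishes for a separably closed residue field in all degrees $d$ — this uses cohomological dimension $0$ of the residue field away from $\Char k$, equivalently $H^d(\bar F,\bQ_p/\bZ_p(d))=0$ for $d\ge 1$. Everything else (the section property of $s_\pi$, functoriality, invariance of the value group) is routine and already recorded in the excerpt.
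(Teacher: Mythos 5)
Your proposal is correct, and its overall architecture (surjectivity via the section $s_\pi$, one inclusion via functoriality and invariance of the value group, the other via Lemma~\ref{lem.ker1}) matches the paper's. The one step where you diverge is the reverse inclusion $\ker(\wedge\nu)\subseteq H_p(\hat F/F)$: you route it through Wadsworth's structure theorem for $H_p(\hat F)$, arguing that $\wedge\nu$ is injective on $H_p(\hat F)$ because the residue cohomology of a separably closed field vanishes, so a generating symbol with $\nu(a_1)=0$ (which has $\wedge\nu=0$) must die over $\hat F$. The paper instead gives a completely elementary argument: if $\nu(a_1)=0$, then the residue of $a_1$ has a $p^n$-th root in the separably closed residue field (of characteristic $\neq p$), which lifts to $\hat F$ by Hensel's lemma; hence $(a_1)_{p^n}=0$ in $H^1(\hat F,\mu_{p^n})$ and the whole symbol vanishes. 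The paper's route avoids the bookkeeping you flag as your main obstacle (matching Wadsworth's graded decomposition with the statement $H_p(\hat F)\cong\bigwedge\Gamma/p^\infty$), and in particular does not presuppose the ``strictly Henselian'' case of the corollary as an input --- whereas your argument essentially imports that case from Wadsworth as a black box and then deduces the general exact sequence from it. Both are legitimate (the paper itself remarks, just before the corollary, that the statement can be deduced from Wadsworth's theorem), but if you want a self-contained proof you should replace the appeal to \cite[Proposition 2.1]{wadsworth1983henselian} with the one-line Hensel's-lemma root extraction. Also note a minor imprecision in your write-up: a symbol $(a_1,\dots,a_d)_{p^n}$ with $\nu(a_1)=0$ does not in general ``become a symbol over the residue field,'' since $a_2,\dots,a_d$ need not be units; your fallback formulation via the injectivity of $\wedge\nu$ on $H_p(\hat F)$ is the one that actually works.
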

\begin{proof}
    Since $(\hat{F},\nu)$ has the same value group as $(F,\nu)$, one has for all $\alpha \in H(F)$:
    $$\wedge\nu(\alpha) = \wedge\nu(\alpha_{\hat{F}}).$$
    Therefore $H_p(\hat{F}/F) \subset \ker \wedge\nu$. For the reverse inclusion, it suffices to check that symbols of the form $(a_1,\dots,a_d)_{p^n}$ with $\nu(a_1) =0$ are split by $\hat{F}$ by Lemma~\ref{lem.ker1}. This follows from the fact that if $a\in \hat{F}$ and $\nu(a) = 0$, then $a$ has an ${p^n}$-th root in $\hat{F}$. The residue class of $a$ has a ${p^n}$-th root in the residue field of $\hat{F}$ because it is separably closed. The residue field has the same characteristic as $k$ because $\nu(k^*)=0$ and so this ${p^n}$-th root may be lifted to $\hat{F}$ using Hensel's lemma.
\end{proof}

\section{A lower bound on essential dimension}
Now that we have constructed the invariant $\wedge\nu$, our goal is to show that if $\wedge\nu(\alpha)$ is "complicated enough" for $\alpha\in H_p(F)$, then $\alpha$ cannot descend to a subfield $F_0\subset F$ of small transcendence degree over $k$. The first step is to define a measure of complexity for elements of $\bigwedge \Gamma/ p^\infty $.

\begin{definition} 
    The width $\rho(\omega)$ of an element $\omega \in \bigwedge \Gamma/ p^\infty $ is defined as follows:
    $$\rho (\omega) = \min \bigg\{ \rank_\bZ W  \mid \  \begin{array}{cc}
           \text{Subgroups} \ W \subset \Gamma, \\
          \text{such that }  \omega \in \bigwedge W / p^\infty         
        \end{array}  \bigg\}.$$
\end{definition}
This definition is clearly analogous to the definition of essential dimension. Our next proposition makes the analogy precise. In the next section we will explain how to compute $\rho(\omega)$ in general. 

\begin{proposition}\label{prop.general_lower_bound}
    Let $(F,\nu)$ be a valued field with residue field $k$. Assume $\Char k \neq p$. Then
    
    \smallskip
    (a) $\ed_{k}(\beta;p)\geqslant \rho(\wedge\nu(\beta))$ for any $\beta \in H_p(F)$
    
     \smallskip
     (b) Moreover, if $\beta$ is monomial (see Definition~\ref{def.monomial}), then 
    $\ed_k(\beta) =\ed_k(\beta;p) =\rho(\wedge(\beta))$.
\end{proposition}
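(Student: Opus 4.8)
The plan is to establish (a) first and derive (b) from it. For (a), fix $\beta\in H_p(F)$, put $\omega=\wedge\nu(\beta)$, and let $d=\ed_k(\beta;p)$. By the definition of essential $p$-dimension there is a finite extension $L/F$ of degree prime to $p$ and an intermediate field $k\subseteq F_0\subseteq L$ with $\trdeg_k F_0\le d$ to which $\beta_L$ descends, say $\beta_L$ is the image of $\gamma\in H_p(F_0)$. I would extend $\nu$ to a valuation $\nu_L$ on $L$ and restrict it to $F_0$, writing $\Gamma_L\supseteq\Gamma$ and $\Gamma_0=\nu_L(F_0^*)\subseteq\Gamma_L$ for the value groups. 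Since $[L:F]$ is prime to $p$ and the residue characteristic $\Char k$ is different from $p$ (so the defect contributes no factor of $p$), the ramification index $[\Gamma_L:\Gamma]$ is prime to $p$; hence the canonical map $\iota\colon\bigwedge\Gamma/p^\infty\to\bigwedge\Gamma_L/p^\infty$ is an isomorphism. Functoriality of $\wedge\nu$ (Remark~\ref{rem.scalars} together with the construction in Corollary~\ref{cor.wedge.symbol}) then gives $\iota(\omega)=\wedge\nu_L(\beta_L)=\wedge(\nu_L|_{F_0})(\gamma)$, so $\iota(\omega)$ lies in the image of $\bigwedge\Gamma_0/p^\infty$ inside $\bigwedge\Gamma_L/p^\infty$. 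Finally, the classical Abhyankar (Zariski--Samuel) inequality applied to the valued field $(F_0,\nu_L|_{F_0})$ over $k$ bounds $\rank_\bZ\Gamma_0\le\trdeg_k F_0\le d$.

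The remaining task in (a) is to transport this conclusion back to $\Gamma$: I would set $W=\Gamma\cap(\Gamma_0\otimes\bQ)$, a saturated subgroup of $\Gamma$ with $\rank_\bZ W=\dim_\bQ(\Gamma_0\otimes\bQ)=\rank_\bZ\Gamma_0\le d$, and check that $\omega\in\bigwedge W/p^\infty$. Because $W$ is saturated in $\Gamma$, the quotient $\Gamma/W$ is free and $\bigwedge W/p^\infty$ is a direct summand of $\bigwedge\Gamma/p^\infty$, so it suffices to see that $\iota(\omega)$ lands in the corresponding summand of $\bigwedge\Gamma_L/p^\infty$. This holds because $N\Gamma_0\subseteq W$ for $N=[\Gamma_L:\Gamma]$, and multiplication by the prime-to-$p$ integer $N$ is invertible on $\bQ_p/\bZ_p$, so every element of the image of $\bigwedge\Gamma_0/p^\infty$ can be rewritten so as to lie already in the image of $\bigwedge W/p^\infty$. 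This gives $\rho(\omega)\le\rank_\bZ W\le d=\ed_k(\beta;p)$, proving (a). (The weaker bound $\ed_k(\beta)\ge\rho(\omega)$ is easier: take $L=F$ and the transport step becomes vacuous.)

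For (b), assume $\beta=s_\pi(\omega)$ is monomial; since $s_\pi$ is a section of $\wedge\nu$ we have $\wedge\nu(\beta)=\omega$, and (a) already yields $\ed_k(\beta)\ge\ed_k(\beta;p)\ge\rho(\omega)$, so only a matching upper bound is needed. I would put $d=\rho(\omega)$, choose $W\subseteq\Gamma$ of rank $d$ with $\omega\in\bigwedge W/p^\infty$ and a $\bZ$-basis $\gamma_1,\dots,\gamma_d$ of $W$, and set $x_j=\pi^{\gamma_j}\in F^*$. Expanding $\omega$ as a $\bZ$-linear combination of classes $\tfrac{1}{p^n}\otimes(\gamma_{i_1}\wedge\dots\wedge\gamma_{i_m})$ and applying $s_\pi$ writes $\beta$ as a sum of symbols whose entries are among $x_1,\dots,x_d$ together with roots of unity, which lie in $k$. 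That sum is already defined over $F_0=k(x_1,\dots,x_d)$ and restricts to $\beta$ in $H_p(F)$, so $\beta$ descends to $F_0$ and $\ed_k(\beta)\le\trdeg_k F_0\le d$. Combining the two inequalities gives $\ed_k(\beta)=\ed_k(\beta;p)=\rho(\omega)$.

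I expect the main obstacle to be part (a), and within it the passage to the prime-to-$p$ extension $L$: one must verify carefully that enlarging the value group from $\Gamma$ to $\Gamma_L$ neither alters the group $\bigwedge\Gamma/p^\infty$ in which $\omega$ lives nor allows the width to drop. This is precisely where the hypothesis $\Char k\ne p$ (through the defect being prime to $p$) and the invertibility of prime-to-$p$ integers on $\bQ_p/\bZ_p$ enter. The only substantial external input, the Abhyankar inequality $\rank_\bZ\nu_0(F_0^*)\le\trdeg_k F_0$, is standard, and part (b) is routine once the monomial description of $\beta$ is unwound via $s_\pi$.
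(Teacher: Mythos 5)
Your argument is essentially the paper's own: pass to a prime-to-$p$ extension $L$ whose ramification index over $(F,\nu)$ is prime to $p$, use Lemma~\ref{lem.prime_to_p2} to see that this does not change $\bigwedge\Gamma/p^\infty$ or the width, apply the Abhyankar rank inequality to the descent field, and for (b) descend $s_\pi(\omega)$ to $k(\pi^{\gamma_1},\dots,\pi^{\gamma_d})$; your explicit transport via $W=\Gamma\cap(\Gamma_0\otimes\bQ)$ is just an unwound form of the paper's equation comparing $\rho$ over $\Gamma$ and $\Gamma_L$. The one justification that needs repair is your claim that the ramification index $[\Gamma_L:\Gamma]$ of \emph{any} extension $\nu_L$ of $\nu$ is prime to $p$: for a non-Henselian $(F,\nu)$ the identity $efd=[L:F]$ holds only after summing over all extensions of $\nu$ to $L$ (equivalently, over the factors of $L\otimes_F F^h$), so an individual extension can have ramification divisible by $p$ even when $[L:F]$ is not; one must \emph{choose} an extension with prime-to-$p$ ramification, which exists by the Henselization argument of Lemma~\ref{lem.prime_to_p1}.
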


\begin{proof}
(a)    Let $F\subset L$ be a prime to $p$ extension such that $\ed_k(\beta_L) = \ed_{k}(\beta;p)$. By Lemma~\ref{lem.prime_to_p1} we can choose an extension $\tilde{\nu}$ of $\nu$ to $L$ such that $[\tilde{\nu} L : \nu E]$ is prime to $p$. By Lemma~\ref{lem.prime_to_p2} for any subgroup $W \subset \tilde{\nu}L $ we have: 
    \begin{equation}\label{eq.1}
        \bigwedge W\cap \nu E / p^\infty  = \bigwedge W / p^\infty .
    \end{equation}
    This implies: 
    \begin{equation}\label{eq.2}
        \rho(\wedge \nu (\beta)) = \rho(\wedge \tilde{\nu}(\beta_L)).
    \end{equation}
    Assume that $\beta_L$ descends to $k\subset L_0\subset L$. Then 
    $\wedge \tilde{\nu}(\beta_L) \in \bigwedge \tilde{\nu} L_0 / p^\infty$, and~\eqref{eq.2} gives
    \begin{equation} \label{eq3}
    \rank (\nu L_0 ) \geqslant \rho(\wedge \tilde{\nu}(\beta_L)) = \rho(\wedge \nu (\beta)).
    \end{equation}
    Now recall that by~\cite[Chapter XVII, Section 4, Theorem II]{hodge1954}, $\trdeg_k(L_0) \geqslant  \rank (\nu L_0 )$;
    see also \cite[Chapter VI, Theorem 3, Corollary 1]{zariski1960commutative}, \cite[Theorem 3.1]{meyer2012valuation}. Combining this with~\eqref{eq3}, we 
    obtain
    $\trdeg_k(L_0) \geqslant \rho(\wedge \nu (\beta))$.
    Therefore, $\ed_{k}(\beta;p)= \ed_k(\beta_L) \geqslant \rho(\wedge \nu (\beta))$. 

    \medskip
    (b) Now assume that $\beta$ is monomial. Then there exists a uniformizer $\pi$ of $\nu$ and a class $\omega \in \bigwedge \Gamma/ p^\infty $ such that $s_\pi(\omega) = \beta$. By definition we can find a subgroup $W\subset \Gamma$ such that $\omega \in  \bigwedge W/ p^\infty $ and
    $$\rank(W) = \rho(\wedge \nu (\beta)).$$
    Choose a basis $e_1,\dots,e_r$ of $W$ and set $F_0=k(\pi^{e_1},\dots,\pi^{e_r})$. Since $\omega \in  \bigwedge W/ p^\infty $ and $W = \tilde{\nu} F_0$, we have:
    $$\beta = s_\pi(\omega) \in \im (H_p(F_0) \to H_p(F)).$$
    We conclude that if $\beta$ is monomial, then
    $$\ed_k(\beta) \leqslant  \trdeg_k(F_0) \leqslant  r = \rho(\wedge \nu (\beta)).$$
Combining this inequality with part (a) and remembering~\eqref{e.ed-vs-ed-at-p}, we deduce part (b).
\end{proof}

\section{Computing \texorpdfstring{$\rho$}{rho}}\label{sect.computing_rho}
In this section we show $\rho$ is relatively easy to compute using linear algebra. Our main tools are the contraction maps of $\bigwedge \Gamma$. Contraction is a way of ``applying" an element of $\bigwedge \Gamma$ to an element of
the dual group $\Gamma^* = \Hom_\bZ(\Gamma,\bZ)$ that generalizes multiplying a vector by a matrix (this special case will play a key role in Section~\ref{sec.brauer}). 

\begin{definition}\label{def.contraction}
For any $\eta\in \bigwedge \Gamma^*$, we will write $\iota_{\eta} : \bigwedge \Gamma/p^\infty \to \bigwedge \Gamma/p^\infty$ for the tensor product $\id_{\bQ_p/\bZ_p}\otimes\iota'_\eta$ where $\iota'_\eta: \bigwedge \Gamma \to \bigwedge \Gamma$ is the usual contraction map from linear algebra, see \cite[Chapter III, p.~602]{bourbaki_ac_1-7}. We recall the explicit formula for $\iota_\eta$ because it will be used often. For any $f \in \Gamma^*$, $n\in \bN$ and $\gamma_1,\dots,\gamma_d \in \Gamma$ we have:
\begin{equation*}
    \iota_f(\frac{1}{p^n}\otimes \gamma_1\wedge\dots\wedge \gamma_d) = \sum_{i=1,\dots,d}  \frac{f(\gamma_i) (-1)^{i-1}}{p^n}\otimes  (\gamma_1\wedge\dots\hat{\wedge \gamma_i \wedge}\dots\wedge \gamma_d).
\end{equation*}
Here $\hat{\gamma_i}$ means "omit $\gamma_i$". This formula determines $\iota_f$ uniquely by linearity together with the condition $\iota_f(\frac{1}{p^n}\otimes 1_{\bigwedge \Gamma}) = 0$, for all $n \in \bN$. For general $\eta\in \bigwedge \Gamma^*$, $\iota_\eta$ is defined by linearity in $\eta$ and by the formula:
$$\iota_{f_1\wedge\dots\wedge f_d} = \iota_{f_1}\circ \dots \circ \iota_{f_d}.$$
We use the convention $\iota_{1}(\cdot) =\id_{\bigwedge\Gamma}$, where $1=1_{\bigwedge \Gamma^*}$ is the unit of $\bigwedge \Gamma^*$.
Note that for $\eta\in \bigwedge^d \Gamma^*$ and $\omega \in \bigwedge^t \Gamma/p^\infty$ we have $\iota_\eta(\omega) \in \bigwedge^{t-d} \Gamma /p^\infty$. In particular, if $d > t$, then $\iota_\eta(\omega) =0$. 
For any $\omega \in \bigwedge \Gamma/ p^\infty $, we denote by $A_\omega \subset \Gamma/ p^\infty $ the finite subgroup generated by degree $1$ parts $[\iota_{\eta}(\omega)]_1$ as we vary over $\eta\in \bigwedge \Gamma^*$.
\end{definition}

The next proposition shows that we can compute $\rho(\omega)$ using contractions.

\begin{proposition}\label{prop.computing_rho}
    Let $\omega \in \bigwedge \Gamma/ p^\infty $. The following hold:
    \begin{enumerate}
        \item If $W\subset \Gamma$ is a subgroup such that $\omega\in \bigwedge W/ p^\infty $, then $ A_\omega \subset W/ p^\infty $.
        \item The minimal number of generators of $A_\omega$ is $\rho(\omega)$.
        \item $\rho(\omega) = \dim_{\bF_p} A_\omega/p$.
    \end{enumerate} 
\end{proposition}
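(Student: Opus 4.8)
The plan is to prove the three statements in order, using (1) to establish the key containment, then leveraging the structure of finite abelian $p$-groups to deduce (2) and (3). For part (1), I would argue that contraction is functorial with respect to the inclusion $W \hookrightarrow \Gamma$: if $\omega \in \bigwedge W/p^\infty$, then for any $\eta \in \bigwedge \Gamma^*$ one can restrict $\eta$ to $W^* = \Hom_\bZ(W,\bZ)$ and check, using the explicit formula for $\iota_f$ on pure wedges $\frac{1}{p^n}\otimes\gamma_1\wedge\dots\wedge\gamma_d$ with all $\gamma_i \in W$, that $\iota_\eta(\omega)$ computed in $\bigwedge\Gamma/p^\infty$ agrees with the contraction computed inside $\bigwedge W/p^\infty$ by $\eta|_W$. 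The point is that $\iota_f(\gamma_i)$ depends only on $f(\gamma_i)$, and every $f|_W$ extends to some $f \in \Gamma^*$ since $\Gamma/W$ need not be free — but one only needs that the degree-$1$ parts $[\iota_\eta(\omega)]_1$ land in $\bigwedge^1 W/p^\infty = W/p^\infty$, which is immediate since each summand is a scalar multiple of a $\gamma_i \in W$. Hence $A_\omega \subset W/p^\infty$.

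For part (2), first apply (1) with $W$ a subgroup realizing the minimum in the definition of $\rho(\omega)$, so $\rank_\bZ W = \rho(\omega)$; then $A_\omega \subset W/p^\infty \cong (\bQ_p/\bZ_p)^{\rho(\omega)}$, and since $A_\omega$ is finite, it needs at most $\rho(\omega)$ generators (a finite subgroup of $(\bQ_p/\bZ_p)^m$ needs at most $m$ generators, because it embeds in $(\bZ/p^N)^m$ for large $N$, whose every subgroup is generated by $\leqslant m$ elements by the structure theorem). Conversely, I must show $\omega$ actually lies in $\bigwedge A'/p^\infty$, where $A' \subset \Gamma$ is a subgroup with $A'/p^\infty = A_\omega$ (such $A'$ exists: lift a generating set of $A_\omega$ to elements of $\Gamma$ and take the subgroup they span, possibly after saturating). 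This reduces to showing $\omega \in \bigwedge A_\omega$ inside $\bigwedge\Gamma/p^\infty$, i.e.\ that $\omega$ is ``supported'' on the subgroup it generates via contractions. I would prove this by downward induction on the top degree $t$ of $\omega$: write $\omega = \sum_j e_j^* \wedge \omega_j$ plus a term not involving $e_j^*$, where $\{e_i\}$ extends a basis adapted to a free complement — more cleanly, pick a basis $f_1,\dots,f_r$ of $\Gamma^*$ dual to a basis $e_1,\dots,e_r$ of (a saturation of) a minimal $W$; then the components of $\omega$ in the pure-wedge decomposition are recovered, up to sign, as top-degree contractions $\iota_{f_{i_1}\wedge\dots\wedge f_{i_s}}(\omega)$ evaluated on the complementary indices, and these all lie in $A_\omega$ — so every basis vector appearing with nonzero coefficient in some component of $\omega$ lies in $A_\omega$, forcing $\omega \in \bigwedge A_\omega/p^\infty$. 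Combined with the upper bound, the minimal number of generators of $A_\omega$ equals $\rho(\omega)$.

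Part (3) is then pure algebra: $A_\omega$ is a finite abelian $p$-group, so by the structure theorem $A_\omega \cong \bigoplus_{i=1}^m \bZ/p^{n_i}$ with $m$ its minimal number of generators, and $A_\omega/pA_\omega \cong (\bZ/p)^m$, whence $\dim_{\bF_p}(A_\omega/p) = m = \rho(\omega)$ by part (2).

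\textbf{Expected main obstacle.} The delicate point is the converse direction of part (2): showing that $\omega$ genuinely descends to $\bigwedge A_\omega/p^\infty$, rather than merely that $A_\omega$ is small. The contraction formula recovers individual pure-wedge coefficients of $\omega$ only after choosing a basis, and one must check that no ``cross terms'' force extra basis vectors into the support — concretely, that the subgroup generated by all basis vectors occurring in $\omega$ (with respect to a well-chosen basis of a minimal $W$) is exactly $A_\omega$, not something larger. Getting the bookkeeping right with the signs $(-1)^{i-1}$ and the iterated contractions $\iota_{f_1}\circ\dots\circ\iota_{f_d}$, and handling the $\bQ_p/\bZ_p$-coefficients (where ``coefficient of a pure wedge'' means an element of $\bQ_p/\bZ_p$, and torsion subtleties could in principle intervene), is where the real work lies; everything else is a routine application of the structure theorem for finite abelian $p$-groups.
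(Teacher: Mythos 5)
Your proposal is correct and follows essentially the same route as the paper: part (1) by noting contraction preserves $\bigwedge W/p^\infty$, the upper bound in (2) from $A_\omega$ being a finite subgroup of $W/p^\infty\cong(\bQ_p/\bZ_p)^{\rank W}$, the converse by lifting generators of $A_\omega$, saturating, extending to a basis of $\Gamma$, and extracting the pure-wedge coefficients of $\omega$ as degree-one contractions against the dual basis, and (3) by the structure theorem. The only imprecisions are notational (one cannot have $A'/p^\infty = A_\omega$ since the left side is divisible, and the conclusion is not that each basis vector lies in $A_\omega$ but that the coefficient of any $e_j$ outside the saturated subgroup vanishes mod $p^n$ because $A_\omega$ sits inside a direct summand); both are repaired exactly as in the paper's proof.
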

\begin{proof} (1) It is clear from the definition that for any $\eta\in \bigwedge \Gamma^*$ 
    and subgroup $W\subset \Gamma$:
    $$\iota_\eta \bigwedge W/ p^\infty  \subset  \bigwedge W/ p^\infty .$$
    Therefore if $\omega\in \bigwedge W/ p^\infty $, then $\iota_\eta(\omega)\in \bigwedge W/ p^\infty $. Letting
    $\eta$ vary over $\bigwedge \Gamma^*$, we see that $A_\omega \subset W/ p^\infty $.

    \smallskip
   (2)  Assume that $\omega \in \bigwedge W/ p^\infty $ for some subgroup $W\subset \Gamma$.
    By part (1), $A_\omega \subset W/ p^\infty $. For some $n$, we have $p^n\omega = 0$ and so:
    $$A_\omega \subset \frac{1}{p^n}\bZ_p/\bZ_p \otimes W \cong W/p^n W.$$
    Therefore $A_\omega$ is generated by $\rank W$ elements \cite[Proposition A.36]{tignol2015value}. Taking $\rank W$ to be minimal we get that $A_\omega$ is generated by $\rho(\omega)$ elements. 
    
    It remains to show that $A_\omega$
    cannot be generated by fewer than $\rho(\omega)$ elements. 
    Suppose that $A_\omega$ is generated by $m$ elements. Our goal is to prove $\rho(\omega)\leqslant  m$. Let $u_1,\dots,u_m \in \Gamma$ and $n\in \bN$ be elements such that:
    \begin{equation}\label{eq.rank0}
        A_\omega = \langle \frac{1}{p^n}\otimes u_i \mid i=1,\dots,m \rangle.
    \end{equation}
    Let $U$ be the saturation of the subgroup generated by $u_1,\dots,u_m$ in $\Gamma$. Then
    \begin{equation}\label{eq.rank1}
        \rank U \leqslant  m. 
    \end{equation}
    Now, we use the fact that any saturated subgroup of $\Gamma$ is a direct summand. Therefore we may assume that $\Gamma = \bZ^r$ with basis $e_1,\dots,e_r$ and $U = \langle e_1,\dots,e_s \rangle $, where $s = \rank U$. There are integers $a_{i_1,\dots,i_d}$ indexed by increasing sequences such that:
    $$\omega = \sum_{1\leqslant  i_1 <\dots<i_d \leqslant  r} \frac{a_{i_1,\dots,i_d}}{p^n}\otimes e_{i_1}\wedge\dots\wedge e_{i_d}.$$
    Let $j_1 < \dots < j_d$ be a sequence such  $j_d >   s$. It suffices to show $a_{j_1,\dots,j_d}$ is divisible by $p^n$ for any such sequence. 
    Let $e^1,\dots,e^r\in \Gamma^*$ be the dual basis of $\Gamma^*$. We denote:
    $$\eta = e^{j_1}\wedge \dots\wedge \hat{e^{j_t}}\wedge\dots \wedge e^{j_d},$$
    where $\hat{e^{j_t}}$ means ``omit $e^{j_t}$”, and compute the degree $1$ homogeneous part of $\iota_\eta(\omega)$:
    \begin{align*}
        [\iota_{\eta}(\omega)]_1 &= \sum_{1\leqslant  i_1 <\dots<i_d \leqslant  r}\frac{a_{i_1,\dots,i_d}}{p^n} \otimes [\iota_{\eta}(e_{i_1}\wedge\dots\wedge e_{i_d})]_1 \\
        &= \pm \frac{a_{j_1,\dots,j_d}}{p^n} \otimes  e_{j_d} + \sum_{i\neq j_d} \frac{b_i}{p^n} \otimes  e_i.
    \end{align*}
    for some $b_i \in \bZ$. Since $[\iota_{\eta}(\omega)]_1\in A_\omega$, \eqref{eq.rank0} implies there exists $u\in U = \langle e_1,\dots,e_s \rangle$ such that:
    $$[\iota_{\eta}(\omega)]_1 = \frac{1}{p^n} \otimes u.$$
    We conclude that $a_{j_1,\dots,j_d}$ is divisible by $p^n$ because $j_d  >   s$. This shows that $\omega \in \bigwedge U/ p^\infty $. Together with \eqref{eq.rank1} this gives $\rho(\omega) \leqslant   s= \rank U \leqslant  m$, as desired.

    \smallskip
    (3) Since $A_{\omega}$ is an abelian $p$-group, the minimal number of generators of $A_{\omega}$ equals 
    $\dim_{\bF_p}( A_\omega /p)$.
\end{proof}

\section{Proof of Theorem~\ref{thm.main} and Corollary~\ref{cor.infty}}
\label{sect.main}

Putting together the results of the previous two sections we are now able to prove our main result, Theorem~\ref{thm.main}.
We restate it here in slightly greater generality.

\begin{theorem}\label{thm.main_detailed}
    Let $(F,\nu)$ be a valued field over $k$ with value group $\Gamma$. Assume $\Char k\neq p$.
    Let $\alpha \in H_p(F)$ be a cohomology class, $\omega = \wedge\nu(\alpha)\in \bigwedge \Gamma/p^\infty$, and $A_\omega \subset \Gamma/p^\infty$ be the subgroup associated to $\omega$; see Definition~\ref{def.contraction}. Then
    \begin{equation}\label{eq.loose_ed}
        \ed_k(\alpha; p) \geqslant  \dim_{\bF_p}(A_\omega/ pA_\omega).
    \end{equation}
    Furthermore, if $\alpha$ is monomial (see Definition~\ref{def.monomial}), then
    \begin{equation}\label{eq.tight_ed}
    \ed_k(\alpha) = \ed_k(\alpha; p) = \dim_{\bF_p}(A_\omega/ pA_\omega).
    \end{equation}
    In particular, equality~\eqref{eq.tight_ed} holds whenever $(F,\nu)$ is strictly Henselian.
\end{theorem}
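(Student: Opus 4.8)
The plan is to read Theorem~\ref{thm.main_detailed} off from the previous two sections, where all the real work has already been done: Proposition~\ref{prop.general_lower_bound} relates $\ed_k(\alpha;p)$ (and, when $\alpha$ is monomial, also $\ed_k(\alpha)$) to the width $\rho(\wedge\nu(\alpha))$, and Proposition~\ref{prop.computing_rho}(3) converts $\rho$ into the linear-algebraic quantity $\dim_{\bF_p}(A_\omega/pA_\omega)$. So the argument will amount to a short chain of citations, plus one small extra observation for the strictly Henselian case.

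First I would dispose of the general lower bound~\eqref{eq.loose_ed}. Writing $\omega=\wedge\nu(\alpha)$, Proposition~\ref{prop.general_lower_bound}(a) gives $\ed_k(\alpha;p)\geqslant\rho(\omega)$, while Proposition~\ref{prop.computing_rho}(3) gives $\rho(\omega)=\dim_{\bF_p}(A_\omega/pA_\omega)$; concatenating these yields~\eqref{eq.loose_ed}, which together with the tautological $\ed_k(\alpha)\geqslant\ed_k(\alpha;p)$ of~\eqref{e.ed-vs-ed-at-p} recovers the full chain of inequalities in Theorem~\ref{thm.main}. When $\alpha$ is monomial, Proposition~\ref{prop.general_lower_bound}(b) upgrades the first inequality to the two equalities $\ed_k(\alpha)=\ed_k(\alpha;p)=\rho(\omega)$, and substituting $\rho(\omega)=\dim_{\bF_p}(A_\omega/pA_\omega)$ once more gives~\eqref{eq.tight_ed}.

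It remains to handle the strictly Henselian case, and for this I would simply show that \emph{every} class $\alpha\in H_p(F)$ is automatically monomial, after which~\eqref{eq.tight_ed} follows from the monomial case. By Corollary~\ref{cor.ker2}, strict Henselianness of $(F,\nu)$ forces $\wedge\nu\colon H_p(F)\to\bigwedge\Gamma/p^\infty$ to be an isomorphism. Fix any uniformizing parameter $\pi$. The associated map $s_\pi$ is a one-sided inverse of $\wedge\nu$, i.e.\ $\wedge\nu\circ s_\pi=\id$; since $\wedge\nu$ is now a bijection, this forces $s_\pi=(\wedge\nu)^{-1}$, and hence $\alpha=s_\pi(\wedge\nu(\alpha))=s_\pi(\omega)$ is monomial in the sense of Definition~\ref{def.monomial}. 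Conceptually, strict Henselianness is the right hypothesis precisely because it is the condition under which $\wedge\nu$ loses no information and each class descends to a field of the form $k(\pi^{e_1},\dots,\pi^{e_r})$, realising the upper bound in Proposition~\ref{prop.general_lower_bound}(b).

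I do not expect any genuine obstacle here, since the difficulty was front-loaded into the construction of $\wedge\nu$ and the computation of $\rho$; this section is essentially the payoff where the pieces are assembled. The one point that needs a line of care is the matching of hypotheses: Proposition~\ref{prop.general_lower_bound} is stated for a valued field whose residue field is $k$, whereas Theorem~\ref{thm.main_detailed} only assumes $k$ is a subfield with $\nu_{|k}=0$ (Proposition~\ref{prop.computing_rho} is pure linear algebra and has no field hypothesis at all). This is harmless, because the proof of Proposition~\ref{prop.general_lower_bound}(a) uses $k$ solely through the valuation-theoretic inequality $\trdeg_k(L_0)\geqslant\rank(\nu L_0)$, which holds as soon as $\nu_{|k}=0$, and passage to the strict Henselization does not change the value group $\Gamma$.
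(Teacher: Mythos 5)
Your proposal is correct and matches the paper's own proof, which likewise derives both \eqref{eq.loose_ed} and \eqref{eq.tight_ed} by combining Proposition~\ref{prop.general_lower_bound} with Proposition~\ref{prop.computing_rho}, and handles the strictly Henselian case by noting via Corollary~\ref{cor.ker2} that every class is then monomial. Your extra remarks (that $s_\pi$ becomes a two-sided inverse of $\wedge\nu$, and that the residue-field hypothesis mismatch is harmless) are accurate elaborations of the same argument.
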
 
\begin{proof}
    Both \eqref{eq.loose_ed} and \eqref{eq.tight_ed} follow immediately from Proposition~\ref{prop.general_lower_bound} and Proposition~\ref{prop.computing_rho}. If $(F,\nu)$ is strictly Henselian, then every class is monomial (see Corollary~\ref{cor.ker2}) and so \eqref{eq.tight_ed} holds in that case.
\end{proof}

We now turn to the proof of Corollary~\ref{cor.infty}. We need to show that for any $d \geqslant 2$ and any prime number 
$p$ there exist objects of $H^d_p(*)$ of arbitrarily large essential $p$-dimension. This follows from Proposition~\ref{prop.variables} 
below: just let $r \longrightarrow \infty$. 

\begin{proposition} \label{prop.variables} Let $r \geqslant 1$ and $d \geqslant 2$ be integers.
Let \[ F_{rd} = k((a_{11})) \ldots ((a_{1d}))((a_{21})) \ldots ((a_{2d})) \ldots ((a_{rd})) , \]
be the field of iterated Laurent series in $rd$ variables $a_{ij}$ over $k$ .  
Consider the class $\alpha \in H_p^d(K)$ given by
\[ \alpha = (a_{11}, \ldots, a_{1d})_{p^n} + \ldots + (a_{r1}, \ldots, a_{rd})_{p^n}. \]
Then $\ed_k(\alpha) = \ed_k(\alpha; p) = rd$.
\end{proposition}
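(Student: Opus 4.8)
The strategy is to apply Theorem~\ref{thm.main_detailed} to the iterated Laurent series field $F_{rd}$, which is strictly Henselian when we pass to $k^{\sep}$ (and since enlarging $k$ does not change essential dimension at $p$, by~\eqref{e.ext_of_scalar}, we may freely do so). The value group is $\Gamma = \bZ^{rd}$ with the standard basis indexed by the variables $a_{ij}$; write $e_{ij} = \nu(a_{ij})$. Since the class $\alpha$ is a sum of symbols in the uniformizers $a_{ij}$ themselves, it is a $\pi$-monomial class for the obvious uniformizing parameter $\pi$ sending $e_{ij} \mapsto a_{ij}$. By Corollary~\ref{cor.wedge.symbol},
\[ \omega := \wedge\nu(\alpha) = \sum_{i=1}^{r} \frac{1}{p^n}\otimes e_{i1}\wedge e_{i2}\wedge \dots \wedge e_{id} \in \bigwedge^d \Gamma/p^\infty. \]
By Theorem~\ref{thm.main_detailed} it then suffices to show $\dim_{\bF_p}(A_\omega/pA_\omega) = rd$; in fact, since the ambient group $\Gamma/p^\infty \cong (\bQ_p/\bZ_p)^{rd}$ needs at most $rd$ generators and $A_\omega \subset \Gamma/p^\infty$, the bound $\dim_{\bF_p}(A_\omega/pA_\omega)\leqslant rd$ is automatic, so the real content is the lower bound: every basis generator $\tfrac1{p^n}\otimes e_{ij}$ must lie in $A_\omega$.

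\textbf{Key computation.} Fix indices $i_0 \in \{1,\dots,r\}$ and $j_0 \in \{1,\dots,d\}$. I want to exhibit an element $\eta \in \bigwedge^{d-1}\Gamma^*$ whose contraction picks out $\tfrac1{p^n}\otimes e_{i_0 j_0}$ from $\omega$. Let $e^{ij}$ be the dual basis of $\Gamma^*$ and set
\[ \eta = e^{i_0 1} \wedge \dots \wedge \widehat{e^{i_0 j_0}} \wedge \dots \wedge e^{i_0 d}, \]
the wedge of all the dual covectors attached to the $i_0$-th block, omitting the one in position $j_0$. Using the explicit contraction formula in Definition~\ref{def.contraction} and the fact that $\iota_{\eta}$ annihilates any pure wedge missing even one of the basis vectors $e_{i_0 1},\dots,\widehat{e_{i_0 j_0}},\dots, e_{i_0 d}$, one computes that $\iota_\eta(\tfrac1{p^n}\otimes e_{i1}\wedge\dots\wedge e_{id}) = 0$ for $i \neq i_0$ (that summand lacks the required block-$i_0$ basis vectors), while for $i = i_0$ the contraction leaves exactly $\pm\tfrac1{p^n}\otimes e_{i_0 j_0}$. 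Hence $[\iota_\eta(\omega)]_1 = \pm\tfrac1{p^n}\otimes e_{i_0 j_0} \in A_\omega$. As $i_0,j_0$ were arbitrary, $A_\omega$ contains all $rd$ generators $\tfrac1{p^n}\otimes e_{ij}$, so $A_\omega = \tfrac1{p^n}\bZ_p/\bZ_p \otimes \Gamma \cong \Gamma/p^n$, which has $\dim_{\bF_p}(A_\omega/p) = rd$.

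\textbf{Conclusion.} Combining: $\ed_k(\alpha;p) \geqslant \dim_{\bF_p}(A_\omega/pA_\omega) = rd$ by~\eqref{eq.loose_ed}, and since $F_{rd}$ is strictly Henselian over $k^{\sep}$ (alternatively, $\alpha$ is explicitly monomial, so~\eqref{eq.tight_ed} applies directly over $k$ with no enlargement needed), we get $\ed_k(\alpha) = \ed_k(\alpha;p) = rd$. As $r \to \infty$ with $d \geqslant 2$ fixed, this produces classes in $H^d_p$ of unbounded essential $p$-dimension, proving Corollary~\ref{cor.infty}.

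\textbf{Expected main obstacle.} There is no deep obstacle; the one place to be careful is the bookkeeping in the contraction computation — verifying that $\iota_\eta$ really does kill all the cross-terms $i \neq i_0$ and produces a nonzero multiple (not an accidental zero from sign cancellation) of the single basis vector $e_{i_0 j_0}$. This is the routine verification that the ``diagonal'' structure of $\omega$ (disjoint blocks of basis vectors) makes the contractions behave as cleanly as possible, and it is exactly the special case of Proposition~\ref{prop.computing_rho} specialized to a pure-wedge-sum $\omega$ with pairwise disjoint supports.
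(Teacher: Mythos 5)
Your proof is correct and follows essentially the same route as the paper: compute $\omega=\wedge\nu(\alpha)$, contract with the dual wedge over the $i_0$-th block omitting position $j_0$ to show every $\tfrac1{p^n}\otimes e_{ij}$ lies in $A_\omega$, and conclude via monomiality and Theorem~\ref{thm.main_detailed}. (Your indexing of $\eta$ as $e^{i_0 1}\wedge\dots\wedge\widehat{e^{i_0 j_0}}\wedge\dots\wedge e^{i_0 d}$ is in fact the correct form of what the paper writes with transposed indices.)
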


\begin{proof}
Let $\nu: F_{rd}\to \bZ^{rd}$ be the $(a_{11},\dots,a_{rd})$-adic valuation on $F_{rd}$ (see Section~\ref{sect.notational-conventions}) and recall that
$\nu(a_{ij}) = e_{ij}$ for all $i,j$, where $e_{ij}$ is the natural basis of $\bZ^{rd}$.
We now evaluate $\omega = \wedge\nu(\alpha)$:
\[ \omega = \frac{1}{p^n} \otimes (e_{11} \wedge \ldots \wedge e_{1d} + e_{21} \wedge \ldots \wedge e_{2d} + \ldots + 
e_{r1} \wedge \ldots \wedge e_{rd}). \]
Denote by $e^{ij}\in ( \bZ^{rd})^*$ the dual basis. That is, for all $i,i',j,j'$:
$$e^{ij}(e_{i' j'}) = \begin{cases} 1 & \text{if } i = i' \ \text{and } j= j' \\ 
    0 & \text{otherwise}.
\end{cases}$$
For any $1\leqslant  i,j \leqslant  r$ let $\eta = e^{1j}\wedge\dots\wedge\hat{e^{ij}}\wedge \dots \wedge e^{dj} \in \bigwedge^{d-1}\Gamma^*$ be the wedge product where we omit $e^{ij}$. We have:
$$\iota_\eta(\omega) = \frac{1}{p^n}\otimes e_{ij}.$$
Therefore $ \bZ^{rd}/p^n \subset A_{\omega}$. The inclusion $A_{\omega} \subset \bZ^{rd}/p^n$ is checked easily using the definition of the contraction map. Therefore $A_\omega = \bZ^{rd}/p^n$ and $A_{\omega}/p \cong \bF_p^{rd}$. Since $\alpha$ is monomial, Theorem~\ref{thm.main_detailed} gives $\ed_k(\alpha) = \ed_k(\alpha;p)= rd$.
\end{proof} 

\section{Proof of Theorem~\ref{thm.brauer}}\label{sec.brauer} 
In this section we will deduce Theorem~\ref{thm.brauer} from Theorem~\ref{thm.main_detailed}. Let $(F,\nu)$ be a valued field over $k$  with value group $\Gamma= \bZ^r$.  There is a natural identification of $\bigwedge^2 \bZ^r / p^\infty $ with the skew-symmetric matrices in $M_r(\bQ_p/\bZ_p)$ by the homomorphism:
$$\frac{1}{p^n}\otimes u\wedge v\mapsto \frac{1}{p^n}( uv^t - vu^t). $$
Here we view $u, v \in \bZ^r$ as $r \times 1$ matrices (i.e. column vectors) 
and their transposes, $u^t$ and $v^t$ as $1 \times r$ matrices (i.e., row vectors) 
with integer entries. The products $u v^t$ and $v u^t$ are $r \times r$ matrices.

Under the above identification, the contraction map is given by matrix multiplication. That is, if $\omega \in \bigwedge^2 \bZ^r/ p^\infty $ is identified with the matrix $M \in M_r(\bQ_p/\bZ_p)$ and we identify $\Hom_\bZ(\bZ^r,\bZ)$ with $\bZ^r$ using the dot product, then:
$$\iota_{(\cdot)}(\omega): \bZ^r \to  \bigwedge^1 \Gamma/ p^\infty  = \bZ^r/ p^\infty  = (\bQ_p/\bZ_p)^r.$$
is given by $\iota_v(\omega) = Mv$. To see this, assume that $\omega = \frac{1}{p^n} u\wedge w$ for some $u,w\in \bZ^r$ and calculate using Definition~\ref{def.contraction}:
$$Mv = \frac{1}{p^n}(uw^t - wu^t)v = \frac{1}{p^n}(w^tv)u - \frac{1}{p^n}(u^tv)w = \iota_v(\omega). $$

We now proceed with the proof of Theorem~\ref{thm.brauer}. In fact, we will prove a slightly stronger
assertion, Theorem~\ref{thm.brauer'} below. If $k$ contains a primitive root of unity of degree $p^d$ for every
$d \geqslant 1$, then $\bQ_p/\bZ_p(2)$ is isomorphic to $\bQ_p/\bZ_p(1)$. For any field $F$ containing  $k$, 
$H_p^2(F) = H^2(F,\bQ_p/\bZ_p(2))$ is then isomorphic to the $p$-primary part of $\Br(F)$. Moreover, under this isomorphism
the symbols $(a,b)_{p^n}$ correspond to Brauer classes of cyclic algebras; see \cite[Proposition 4.7.1]{gille2017central}.

\begin{theorem} \label{thm.brauer'}
Let $(F,\nu)$ be a valued field over $k$ with value group $\bZ^r$ and $\Char k \neq p$. Assume $\alpha \in H_p^2(F)$ is a sum of symbols, $$\alpha = (a_1,b_1)_{p^n} + \dots + (a_k,b_k)_{p^n}.$$
Let $M \in M_r(\bZ)$ be the following skew-symmetric matrix:
$$M = \sum_{i=1,\dots,k} \nu(a_i)\nu(b_i)^t - \nu(b_i)\nu(a_i)^t.$$
Assume $d_1 \mid d_2 \mid \dots \mid d_r$ are the elementary divisors of $M$ and $i_0$ is the largest subscript
such that ${p^n}$ does not divide $d_{i_0}$. The following hold

\smallskip
(a) $\ed_k(\alpha; p) \geqslant  i_0$. In particular, if $p^r$ does not divide $\det(M)$, then $\ed_k(\alpha)\geqslant  r$. 

\smallskip
(b) If $(F,\nu)$ is strictly Henselian, then $\ed_k(\alpha) = \ed_k(\alpha ;p) = i_0$.
\end{theorem}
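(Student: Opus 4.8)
The plan is to deduce this directly from Theorem~\ref{thm.main_detailed} by translating everything into the language of contractions developed in Section~\ref{sect.computing_rho} and the matrix description recalled at the start of this section. First I would compute $\omega = \wedge\nu(\alpha) \in \bigwedge^2 \bZ^r/p^\infty$. Since $\wedge\nu$ is a homomorphism and $\wedge\nu((a_i,b_i)_{p^n}) = \frac{1}{p^n}\otimes \nu(a_i)\wedge\nu(b_i)$ by Corollary~\ref{cor.wedge.symbol}, we get $\omega = \sum_i \frac{1}{p^n}\otimes \nu(a_i)\wedge\nu(b_i)$. Under the identification of $\bigwedge^2\bZ^r/p^\infty$ with skew-symmetric matrices in $\Mat_r(\bQ_p/\bZ_p)$, this $\omega$ corresponds to the matrix $\frac{1}{p^n}M$, where $M$ is exactly the integral skew-symmetric matrix in the statement. (Strictly speaking $\omega$ is identified with the image of $\frac{1}{p^n}M$, i.e.\ with $M \bmod p^n$ under $\Mat_r(\bZ)/p^n \hookrightarrow \Mat_r(\bQ_p/\bZ_p)$.)

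Next, using the fact established earlier in this section that the contraction map $\iota_{(\cdot)}(\omega)$ is given by $v \mapsto Mv$ (viewing $v \in \Gamma^* \cong \bZ^r$ and landing in $\bZ^r/p^\infty = (\bQ_p/\bZ_p)^r$), I would identify the group $A_\omega$. By Definition~\ref{def.contraction}, $A_\omega$ is generated by the degree-$1$ parts $[\iota_\eta(\omega)]_1$ over all $\eta \in \bigwedge \Gamma^*$; since $\omega$ has degree $2$, only $\eta$ of degree $1$ contribute a nonzero degree-$1$ part (degree $0$ gives $\omega$ itself, degree $\geqslant 2$ gives zero or a scalar). Hence $A_\omega = \{\frac{1}{p^n}\otimes Mv : v \in \bZ^r\}$, i.e.\ $A_\omega$ is the image of the homomorphism $\bZ^r/p^n \to (\bQ_p/\bZ_p)^r$ induced by $M$. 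Now bring $M$ to Smith normal form: there exist $P, Q \in \GL_r(\bZ)$ with $PMQ = \diag(d_1,\dots,d_r)$, $d_1 \mid \dots \mid d_r$. Then $A_\omega \cong \bigoplus_{i=1}^r \frac{d_i}{p^n}\bZ/\bZ$ (the image of multiplication by $d_i$ on $\frac{1}{p^n}\bZ/\bZ$), and the $i$-th summand is nonzero precisely when $p^n \nmid d_i$. Therefore $A_\omega$ is a direct sum of $i_0$ nonzero cyclic $p$-groups, so $\dim_{\bF_p}(A_\omega/pA_\omega) = i_0$. I should double-check the edge cases: if $p^n \mid d_r$ (so $p^n \mid \det M$ in a strong sense and $i_0 = 0$), then $A_\omega = 0$ and the bound is the trivial $\ed \geqslant 0$; conversely if $p^r \nmid \det M$ then some $d_i$ with $i \leqslant r$ is not divisible by $p$ hence not by $p^n$, forcing $i_0 = r$.

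With $\dim_{\bF_p}(A_\omega/pA_\omega) = i_0$ in hand, part (a) is immediate from inequality~\eqref{eq.loose_ed} of Theorem~\ref{thm.main_detailed}, and the ``in particular'' clause follows from the $i_0 = r$ computation just noted together with $\ed_k(\alpha) \geqslant \ed_k(\alpha;p)$. For part (b), when $(F,\nu)$ is strictly Henselian every class in $H_p(F)$ is monomial by Corollary~\ref{cor.ker2}, so the equality~\eqref{eq.tight_ed} of Theorem~\ref{thm.main_detailed} applies and gives $\ed_k(\alpha) = \ed_k(\alpha;p) = \dim_{\bF_p}(A_\omega/pA_\omega) = i_0$. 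The one genuinely substantive step is the identification $A_\omega = \im(M \colon \bZ^r/p^n \to (\bQ_p/\bZ_p)^r)$ and the Smith-normal-form computation of its $p$-rank; the rest is bookkeeping and invocation of Theorem~\ref{thm.main_detailed}. I do not anticipate a real obstacle here, only the need to be careful with the identification $\bigwedge^2 \bZ^r/p^\infty \cong \mathrm{Skew}_r(\bQ_p/\bZ_p)$ and with signs in the contraction formula, neither of which affects the subgroup $A_\omega$ (only its generators up to sign).
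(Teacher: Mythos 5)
Your proposal is correct in its core and follows exactly the paper's route: compute $\omega=\wedge\nu(\alpha)=\frac{1}{p^n}M$ under the identification of $\bigwedge^2\bZ^r/p^\infty$ with skew-symmetric matrices, recognize $A_\omega$ as the image of $v\mapsto\frac{1}{p^n}Mv$, diagonalize via Smith normal form to get $A_\omega\cong\bigoplus_i\bZ/\bigl(p^n/\gcd(p^n,d_i)\bigr)$ and hence $\dim_{\bF_p}(A_\omega/pA_\omega)=i_0$, then invoke Theorem~\ref{thm.main_detailed}. This is precisely the paper's argument.

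The one place your write-up slips is the edge-case discussion of the ``in particular'' clause. From $p^r\nmid\det M$ you infer that some $d_i$ is prime to $p$ and conclude $i_0=r$. But $i_0$ is the \emph{largest} index with $p^n\nmid d_{i_0}$, and the $d_i$ form a divisibility chain, so knowing that the smallest divisor $d_1$ is prime to $p$ only yields $i_0\geqslant 1$; to get $i_0=r$ you need $p^n\nmid d_r$. For instance, with $r=4$, $n=1$ and elementary divisors $(1,1,p,p)$ one has $\det M=p^2$, so $p^4\nmid\det M$, yet $i_0=2$. So this step of your argument is a non sequitur (and the clause itself only follows cleanly from the main bound under a hypothesis such as $p\nmid\det M$, or for $r=2$ via the even multiplicity of the elementary divisors of a skew-symmetric matrix). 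This does not affect the main assertions $\ed_k(\alpha;p)\geqslant i_0$ and the equality in the strictly Henselian case, which you prove correctly.
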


\begin{proof}
Denote $\omega = \wedge\nu(\alpha)$. By Corollary~\ref{cor.wedge.symbol}:
$$\omega = \frac{1}{p^n} \otimes(\nu(a_1)\wedge \nu(b_1)+\dots+\nu(a_k)\wedge \nu(b_k)).$$
Under the identification of $\bigwedge^2 \bZ^r/p^\infty $ with skew-symmetric matrices in $M_{r}(\bQ_p/\bZ_p)$ we have:
$$\omega = \frac{1}{p^n} \sum_{i=1,\dots,k}\nu(a_i)\nu(b_i)^t - \nu(b_i)\nu(a_i)^t = \frac{1}{p^n}M.$$
As explained above, we can identify $\Hom_\bZ(\bZ^r,\bZ)$ with $\bZ^r$ so that $\iota_v(\omega)$ is given by matrix multiplication:
\begin{equation}\label{eq.matrix_mult}
    \iota_v(\omega) = \frac{1}{p^n}Mv \in (\bQ_p/\bZ_p)^r.
\end{equation}
Set
$A_\omega = \langle\iota_v(\omega) \mid v\in \bZ^r\rangle \subset (\bQ_p/\bZ_p)^r$,
as in Definition~\ref{def.contraction}.
Let $d_1\mid d_2 \mid \dots \mid d_r$ be the elementary divisors of $M \in M_{r}(\bZ)$ and let $\displaystyle k_i := \frac{p^n}{\gcd({p^n},d_i)}$ for each $i=1,\dots,r$. Using~\eqref{eq.matrix_mult} and elementary group theory one sees that $$ A_\omega \simeq \bZ/k_1 \oplus \dots \oplus \bZ/k_r.$$
Therefore $\dim_{\bF_p}(A_\omega/p) = i_0$, where $i_0$ is the number of $k_{i}$'s that are different from $1$. Equivalently, $i_0$ is the largest integer such that ${p^n}$ does not divide $d_{i_0}$.

Now parts $(a)$ and $(b)$ follow from Theorem~\ref{thm.main_detailed}.
\end{proof}

\begin{example} 
Let $F_{2r} = k((a_{11})) ((a_{12})), ((a_{21})), (a_{22})) \ldots ((a_{r1})) ((a_{r2}))$, be the field of iterated Laurent series in $2r$ variables $a_{ij}$. Assume that $k$ contains a primitive root of unity of degree $p^n$.
By Proposition~\ref{prop.variables}, the Brauer class $[D] \in \Br(F)$
of the division algebra $D = (a_{11}, a_{12})_{p^n}\otimes \dots \otimes (a_{r1}, a_{r2})_{p^n}$ 
has essential dimension $2r$. This can also be seen from Theorem~\ref{thm.brauer}. Indeed, 
carrying out the algorithm outlined there, we see that in this case
\[ M = \begin{pmatrix}  B & 0 & \dots & 0 \\
                    0 & B & \dots & 0 \\
                    \hdotsfor{4} \\
                    0 & 0 & \dots & B \end{pmatrix} . \]
Here each entry represents a $2 \times 2$ block: $0$ stands for the $2 \times 2$ zero matrix, and 
$\displaystyle B = \begin{pmatrix} 0 & 1 \\ -1 & 0 \end{pmatrix}$. The diagonal $2 \times 2$ block $B$ appears $r$ times.
Clearly, $\det(M) = \det(B)^r = 1$ and Theorem~\ref{thm.brauer} tells us that $\ed_k \, [D]  = 2r$. 
\end{example}

\begin{corollary}\label{cor.ed_br_is_even}
    Let $(F,\nu)$ be a strictly Henselian valued field over a field $k$ with $\Char k \neq p$. Then $\ed_k(\alpha) = \ed_k(\alpha, p)$ is even for any a Brauer class $\alpha \in \Br(F)$.
\end{corollary}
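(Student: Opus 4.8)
The plan is to reduce the statement to Theorem~\ref{thm.brauer'}(b) and then to show that the integer $i_0$ occurring there is always even; this last point is the real content, and it is nothing more than a classical fact about alternating matrices over $\bZ$. For the reduction, write $\alpha = \alpha_p + \alpha'$ with $\alpha_p$ the $p$-primary component and $\alpha'$ the prime-to-$p$ component. Since $\alpha'$ has exponent, hence index, prime to $p$, it is split by a finite extension $L/F$ of degree prime to $p$; then $\alpha_L = (\alpha_p)_L$, and as essential $p$-dimension does not change under a prime-to-$p$ base change, $\ed_k(\alpha;p) = \ed_k(\alpha_p;p)$. (The inequality $\ed_k(\alpha;p)\ge\ed_k(\alpha_p;p)$ also follows from~\eqref{e.morphism} applied to multiplication by a suitable integer on the functor $\Br$.) Because $(F,\nu)$ is strictly Henselian, its residue field is separably closed and hence $F$ contains the group $\mu_{p^\infty}$ of all $p$-power roots of unity; therefore the $p$-primary Brauer group coincides with $H^2_p$ over every subfield of $F$ containing $\mu_{p^\infty}$, and since any descent field can be enlarged to contain $\mu_{p^\infty}$ at no cost in transcendence degree, the essential dimension (and essential $p$-dimension) of $\alpha_p$ in $\Br$ equals that of the corresponding class in $H^2_p(F)$, which is monomial by Corollary~\ref{cor.ker2}. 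Theorem~\ref{thm.brauer'}(b) then gives $\ed_k(\alpha) = \ed_k(\alpha;p) = i_0$, where $i_0$ is the largest index with $p^n\nmid d_{i_0}$ for $d_1\mid\dots\mid d_r$ the elementary divisors of the skew-symmetric matrix $M\in\Mat_r(\bZ)$ associated to $\alpha_p$ in Theorem~\ref{thm.brauer'}.

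To finish I would show $i_0$ is even. By the normal form of an alternating bilinear form over a principal ideal domain, there is $U\in\GL_r(\bZ)$ such that $U^tMU$ is block diagonal with nonzero blocks the $2\times2$ blocks $c_iJ$, where $J=\begin{pmatrix}0&1\\-1&0\end{pmatrix}$ and $0<c_1\mid c_2\mid\dots\mid c_m$, followed by a zero block of size $r-2m$. Congruence preserves the isomorphism type of the cokernel of the matrix, so the elementary divisors of $M$ are $c_1,c_1,c_2,c_2,\dots,c_m,c_m,0,\dots,0$; in particular $d_{2j-1}=d_{2j}$ for every $j$. Since the $d_i$ are nondecreasing in the divisibility order, the set $\{i : p^n\nmid d_i\}$ is an initial segment $\{1,\dots,i_0\}$ which is a union of the pairs $\{2j-1,2j\}$: if $i_0$ were odd, say $i_0=2j-1$, then $d_{2j}=d_{2j-1}$ would also fail to be divisible by $p^n$, contradicting the maximality of $i_0$. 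Hence $i_0$ is even, which proves the corollary.

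The one substantive ingredient is the parity of $i_0$, i.e.\ the fact that a skew-symmetric integral matrix has its elementary divisors occurring in equal pairs. The rest is routine bookkeeping; its only delicate points are verifying that splitting off the prime-to-$p$ part of $\alpha$ and enlarging descent fields to contain $\mu_{p^\infty}$ disturb neither the value group nor any of the essential dimensions in play.
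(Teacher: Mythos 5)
Your proposal is correct and follows essentially the same route as the paper: the paper's proof is a single sentence deducing the result from Theorem~\ref{thm.brauer}, citing \cite[Chapter IV, Theorem 3]{newman1972integral} for the fact that the nonzero elementary divisors of a skew-symmetric integral matrix occur in equal pairs, which is exactly the parity statement you prove via the symplectic normal form over $\bZ$. The reduction to the $p$-primary component and the identification of $\Br(F)\{p\}$ with $H^2_p(F)$ after adjoining roots of unity are left implicit in the paper, and your write-up supplies them correctly.
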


\begin{proof}
    This follows from Theorem~\ref{thm.brauer} because non-zero elementary divisors of an anti-symmetric matrix have even multiplicity; see \cite[Chapter IV, Theorem 3]{newman1972integral}.
\end{proof}

\section{Some limitations of Theorem~\ref{thm.brauer}}


The following proposition shows that Theorem~\ref{thm.brauer} 
cannot be used to prove a lower bound greater than $2n$ on the essential dimension of a Brauer class 
of index $p^n$. This is because $\wedge\nu$ is unaffected by passing to the strict Henselization; see Remark~\ref{rem.scalars}.

\begin{proposition} \label{prop.ed_limit}
    Let $(F,\nu)$ be a strictly Henselian valued field over $k$ with value group $\Gamma\cong \bZ^r$. If $\alpha\in H^2_p(F)$
    is split by a field extension $L/F$ of degree $p^n$, then
    $$\ed_k(\alpha) \leqslant  2n.$$
\end{proposition}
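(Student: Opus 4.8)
The plan is to use the structure theory of $H^2_p$ over a strictly Henselian field, via the isomorphism $\wedge\nu \colon H^2_p(F) \xrightarrow{\sim} \bigwedge^2 \Gamma/p^\infty$ of Corollary~\ref{cor.ker2}, and to analyze what it means for a class to be split by a degree-$p^n$ extension. Since $(F,\nu)$ is strictly Henselian, every class $\alpha \in H^2_p(F)$ is monomial, so by Theorem~\ref{thm.main_detailed} it suffices to bound $\rho(\omega) = \dim_{\bF_p}(A_\omega/p)$ where $\omega = \wedge\nu(\alpha)$. Identifying $\bigwedge^2 \bZ^r/p^\infty$ with skew-symmetric matrices in $M_r(\bQ_p/\bZ_p)$ as in Section~\ref{sec.brauer}, we have $\omega = \tfrac{1}{p^m} M$ for a skew-symmetric integer matrix $M$ (after clearing denominators), and $A_\omega$ is the image of the map $v \mapsto \tfrac{1}{p^m} Mv$ on $\bZ^r$. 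So the target quantity $\rho(\omega)$ is the minimal number of generators of this finite subgroup of $(\bQ_p/\bZ_p)^r$, which is governed by the elementary divisors of $M$.

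The key step is to exploit the splitting hypothesis. First I would reduce to the case where $L/F$ is a degree-$p^n$ extension that itself carries a valuation $\mu$ extending $\nu$; since $(F,\nu)$ is Henselian, $\mu$ is unique and $[\mu L : \nu F] \cdot f \leqslant p^n$ where $f$ is the residue degree. Because the residue field is separably closed, the residue extension is purely inseparable (hence trivial in characteristic $\neq p$ considerations, or at least of degree prime to... ) — more carefully, one uses that the tame part of the ramification is cyclic and the value-group index $e = [\mu L : \bZ^r]$ divides $p^n$. Now the point is that $\alpha_L = 0$ forces, via functoriality of $\wedge\nu$ (Remark~\ref{rem.scalars}), that $\wedge\mu(\alpha_L) = 0$ in $\bigwedge^2 (\mu L)/p^\infty$; but $\wedge\mu(\alpha_L)$ is just the image of $\omega$ under $\bigwedge^2 \nu F/p^\infty \hookrightarrow \bigwedge^2 \mu L/p^\infty$. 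Since $\mu L \supseteq \bZ^r$ with index dividing $p^n$, the kernel of this inclusion on $p^\infty$-torsion is controlled: $\omega$ maps to zero means $\omega \in p^n \cdot (\bigwedge^2 \mu L/p^\infty) \cap (\bigwedge^2 \bZ^r /p^\infty)$, i.e. the matrix $M$, viewed over the larger lattice $\mu L$, has all entries divisible by $p^{n}$ in the appropriate sense. Concretely, if $\mu L = \tfrac{1}{e}\Lambda$ for a suitable sublattice arrangement with $e \mid p^n$, then the condition is that $p^n$ divides $e^2 \cdot(\text{entries of } M \text{ in the } \mu L\text{-basis})$, forcing strong divisibility of the elementary divisors $d_i$ of $M$ away from a small-rank piece.

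From this divisibility I would extract: all but at most $2n$ of the elementary divisors $d_i$ of $M$ (counted appropriately, matched to the rank of $A_\omega$) are divisible by $p^{m}$ — equivalently, $A_\omega$, written as $\bigoplus \bZ/k_i$ with $k_i = p^m/\gcd(p^m,d_i)$, has at most $2n$ nontrivial summands. Then $\rho(\omega) = \dim_{\bF_p}(A_\omega/p) \leqslant 2n$, and Theorem~\ref{thm.main_detailed} gives $\ed_k(\alpha) = \ed_k(\alpha;p) = \rho(\omega) \leqslant 2n$, as desired. The factor of $2$ should appear naturally because the nonzero elementary divisors of a skew-symmetric matrix come in equal pairs (cf.~Corollary~\ref{cor.ed_br_is_even}), so a degree-$p^n$ extension can ``kill'' at most $n$ such pairs, hence at most $2n$ generators.

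The main obstacle I anticipate is the precise bookkeeping in the ramification-theoretic step: controlling exactly how $\wedge\nu(\alpha) = 0$ over $L$ translates into $p$-divisibility of the elementary divisors of $M$, given that the value-group extension $[\mu L : \nu F]$ need not be all of $p^n$ (there could be residue-field contributions or wild ramification). One clean way around this is to replace $L$ by its intersection behavior with the maximal tamely ramified subextension, or simply to bound $e = [\mu L : \nu F] \leqslant p^n$ and argue that the inclusion $\bigwedge^2 \bZ^r/p^\infty \hookrightarrow \bigwedge^2(\tfrac{1}{e}\bZ^r \cdot(\text{something}))/p^\infty$ has the stated effect on annihilators; here the relevant linear algebra is that if $\omega$ lies in the image of $p^n$ from a lattice containing $\bZ^r$ with index $e \mid p^n$, then the ``non-$p^n$-divisible rank'' of $M$ is at most $2\log_p e \leqslant 2n$. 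I expect this to be a short but delicate argument about finite abelian $p$-groups and skew-symmetric forms.
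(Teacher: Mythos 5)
Your overall strategy is the same as the paper's: extend $\nu$ to a valuation $\tilde{\nu}$ on $L$, use Ostrowski's theorem to get $[\tilde{\nu}L:\nu F]\mid p^n$, note that $\alpha_L=0$ forces $\omega=\wedge\nu(\alpha)$ into the kernel of $\bigwedge^2\Gamma/p^\infty\to\bigwedge^2\tilde{\Gamma}/p^\infty$, and then bound $\rho(\omega)$ by $2n$. The genuine gap is in the last step, which you explicitly leave open (``I expect this to be a short but delicate argument''), and the intermediate formulations you offer in its place are incorrect as stated. The claim that $\omega$ ``lies in the image of $p^n$'' from the larger lattice is vacuous, since $\bigwedge^2\tilde{\Gamma}/p^\infty$ is divisible; and the divisibility condition is not ``$p^n$ divides $e^2\cdot(\text{entries of }M)$''. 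The correct statement: choosing a basis $e_1,\dots,e_r$ of $\tilde{\Gamma}$ with $d_1e_1,\dots,d_re_r$ a basis of $\Gamma$ and $\prod_i d_i=[\tilde{\Gamma}:\Gamma]\mid p^n$, and writing $\omega=\sum_{i<j}a_{ij}\otimes(d_ie_i)\wedge(d_je_j)$ with $a_{ij}\in\bQ_p/\bZ_p$, the vanishing of the image of $\omega$ decouples into $a_{ij}d_id_j=0$ for each pair $i<j$ separately (because the $e_i\wedge e_j$ form a basis of the free module $\bigwedge^2\tilde{\Gamma}$). Also, the factor $2$ does not come from the pairing of elementary divisors of a skew-symmetric matrix, which is where your heuristic locates it; that heuristic does not yield a proof.

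The paper closes the gap as follows. Since each $d_i$ is $\pm$ a power of $p$ and their product divides $p^n$, at most $n$ of them are nontrivial, say $d_i=\pm1$ for $i>n$. For $n<i<j\leqslant r$ the relation $a_{ij}d_id_j=0$ then forces $a_{ij}=0$, so every surviving term of $\omega$ has at least one index $\leqslant n$ and $\omega=\sum_{i\leqslant n}\frac{1}{p^N}\otimes d_ie_i\wedge v_i$ for suitable $v_i\in\Gamma$. Hence $\omega\in\bigwedge W/p^\infty$ where $W$ is generated by the $2n$ elements $d_1e_1,\dots,d_ne_n,v_1,\dots,v_n$, and Theorem~\ref{thm.main_detailed} gives $\ed_k(\alpha)=\rho(\omega)\leqslant\rank W\leqslant 2n$. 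Your matrix picture can be salvaged along the same lines: the computation above shows that the lower-right $(r-n)\times(r-n)$ block of the matrix of $\omega$ in the adapted basis vanishes, so the image of that matrix modulo the denominator $p^N$ is generated by $2n$ elements, whence $\dim_{\bF_p}A_\omega/p\leqslant 2n$ --- but this block-structure argument must actually be carried out; it does not follow from the evenness of the number of nontrivial elementary divisors.
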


\begin{proof}
    By Chevalley's extension theorem \cite[Theorem 3.1.1]{engler2005valued}, we can extend $\nu$ to a valuation $\tilde{\nu}\colon L^* \to \tilde{\Gamma}$. By Ostrowski's theorem \cite[Theorem A.12]{tignol2015value}, 
        $$[\tilde{\Gamma}:\Gamma] \mid [L:F] = p^n. $$
    By the Elementary Divisors Theorem~\cite[Theorem III.7.8]{lang2002algebra}, we there exists a basis $e_1,\dots,e_r$ of $\tilde{\Gamma}$ and integers $d_1,\dots,d_r$ such that $d_1 e_1,\dots,d_r e_r$ is a basis for $\Gamma$ and $\prod_i d_i = [\tilde{\Gamma}:\Gamma] \mid p^n$.
    Thus each $d_i$ is $\pm$ a power of $p$, and we assume without loss of generality that $d_i = \pm 1$ for all $i > n$. For some $a_{ij}\in \bQ_p/\bZ_p$ we have:
    $$\wedge\nu(\alpha) = \sum_{1\leqslant  i<j \leqslant  r} a_{ij}\otimes (d_i e_i\wedge d_j e_j).$$
    Since $\alpha_L =0$, $\wedge\nu(\alpha)$ is in the kernel of the homomorphism:
    $$\phi: \bigwedge \Gamma/p^\infty \to \bigwedge \tilde{\Gamma} /p^\infty . $$
    Since $e_1,\dots,e_r$ is a basis of $\tilde{\Gamma}$, $\phi(\wedge\nu(\alpha))=0$ implies for all $i<j$:
    $$\phi( a_{ij}\otimes d _i e_i \wedge d_j e_j) = a_{ij}d_i d_j \otimes (e_i\wedge e_j) = 0 $$
    We conclude that $a_{ij} =0$ for all $n< i < j\leqslant  r$ because $d_i,d_j = \pm 1$.  Assume that $\displaystyle a_{ij} = \frac{b_{ij}}{p^N}$ for some integers $b_{ij}$. Then
    $$\wedge\nu(\alpha) = \sum_{1\leqslant  i\leqslant  n}\frac{1}{p^N} \otimes d_ie_i \wedge v_i,$$
    where $v_i = \sum_{i< j\leqslant  r} b_{ij}d_j e_j$. Let $W\subset \Gamma$ be the subgroup generated by $d_1 e_1,\dots,d_n e_n$ and $v_1,\dots,v_n$. Since $\wedge\nu(\alpha)\in \bigwedge W / p^\infty$, Theorem~\ref{thm.main_detailed} tells us that
    $\ed_k(\alpha) = \rho(\wedge\nu (\alpha)) \leqslant  2n$.
    \end{proof}

We will now show that the assumption that $\Char(k) \neq p$ in the statement of Theorem~\ref{thm.brauer'} cannot be dropped.

\begin{proposition} \label{prop.char-p} 
Let $k$ be a field of characteristic $p$. Then $\ed_k(\Br; p) = 2$.
\end{proposition}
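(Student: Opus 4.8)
The plan is to prove the two inequalities $\ed_k(\Br;p) \geqslant 2$ and $\ed_k(\Br;p)\leqslant 2$ separately. The lower bound is the easy direction: in characteristic $p$, any central division algebra of degree $p$ (for instance a cyclic algebra arising from an Artin--Schreier extension, or the symbol algebra $[a,b)_p$ of Kato's theory of $p$-torsion Brauer classes) cannot descend to a field of transcendence degree $\leqslant 1$ over $k$, since over a field of transcendence degree $\leqslant 1$ the Brauer group has period prime to $p$ by Tsen--Lang type results in characteristic $p$; this gives $\ed_k(\alpha;p)\geqslant 2$ for such a class, and hence $\ed_k(\Br;p)\geqslant 2$.

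For the upper bound, the key input is the structure theory of $p$-torsion (and more generally $p$-primary) Brauer classes in characteristic $p$ via differential forms / logarithmic de Rham--Witt cohomology. Specifically, I would invoke the fact that for a field $K$ of characteristic $p$, the $p$-torsion subgroup of $\Br(K)$ is generated by classes of the form $[a,b)_p$ (cyclic algebras associated to the Artin--Schreier class of $a$ and the Kummer-type parameter $b$), with the relations of Kato's symbol calculus; and, crucially, that every $p^n$-torsion class is Brauer-equivalent to a \emph{single} such cyclic algebra of degree $p^n$ (this is a theorem of Albert for the $p$-torsion case, extended appropriately for $p^n$). Such a cyclic algebra is visibly defined over $k(a,b)$ where $a,b$ are the two parameters, so it descends to a subfield of transcendence degree at most $2$ over $k$. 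Passing to a prime-to-$p$ extension does not help reduce below this, but also is not needed: we directly get $\ed_k(\alpha)\leqslant 2$ for every $p$-primary Brauer class, hence $\ed_k(\alpha;p)\leqslant 2$, hence $\ed_k(\Br;p)\leqslant 2$. One must also check that a class whose order is not a power of $p$ contributes nothing new to $\ed_k(\Br;p)$: by primary decomposition and the fact that essential $p$-dimension only sees the $p$-primary part, the prime-to-$p$ component can be killed by a prime-to-$p$ extension, so $\ed_k(\alpha;p) = \ed_k(\alpha_p;p)$ where $\alpha_p$ is the $p$-primary component.

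The main obstacle I anticipate is justifying cleanly that \emph{every} $p$-primary Brauer class over a characteristic-$p$ field is cyclic of degree equal to its index, so as to pin the descent down to exactly two parameters rather than a growing number as the index grows. Albert's theorem handles degree $p$; for higher $p$-power degree one needs the stronger statement that central simple algebras of $p$-power degree in characteristic $p$ are cyclic (a classical result, e.g.\ in the theory of $p$-algebras). Once that is in hand, the cyclic presentation $A \cong (L/K,\sigma,b)$ with $L/K$ a cyclic extension of degree $p^n$ exhibits $A$ as defined over $k(a,b)$ where $a$ generates $L$ over $K$ (via Artin--Schreier--Witt, so $a$ is really a Witt vector of length $n$, but the subfield it generates together with $b$ still has transcendence degree $\leqslant 2$ over the prime field of $k$, hence $\leqslant 2$ over $k$ after the harmless enlargement). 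Combined with the lower bound from Tsen--Lang, this yields $\ed_k(\Br;p) = 2$.
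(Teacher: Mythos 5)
Your overall strategy matches the paper's (primary decomposition plus Albert's cyclicity theorem for the upper bound, a degree-$p$ division algebra plus a Tsen-type argument for the lower bound), but both halves have a genuine gap as written. The serious one is in the upper bound: after writing $\alpha$ as a cyclic algebra $(L/K,\sigma,b)$ of degree $p^n$, you assert that it is ``visibly defined over $k(a,b)$'' of transcendence degree $\leqslant 2$, while conceding that $a$ is really a Witt vector of length $n$. A length-$n$ Witt vector has $n$ components, so the naive descent field is $k(a_1,\dots,a_n,b)$, of transcendence degree up to $n+1$, not $2$; your remark about transcendence degree over the prime field does not repair this ($n+1$ algebraically independent elements have transcendence degree $n+1$ over any algebraic extension of the prime field, and enlarging $k$ cannot absorb parameters that vary with $\alpha$). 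What is actually needed is that the cyclic degree-$p^n$ extension itself descends, after a prime-to-$p$ extension, to a one-parameter subfield, i.e.\ that $\ed_k(\bZ/p^n\bZ;p)=1$ in characteristic $p$. This is the Reichstein--Vistoli theorem the paper invokes, and it is not a formal consequence of the Artin--Schreier--Witt parametrization; indeed Ledet's conjecture predicts $\ed_k(\bZ/p^n\bZ)=n$ without the prime-to-$p$ relaxation. Without this input your bound degenerates to $n+1$ and grows with the index.

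The lower bound also has a (fixable) gap: for an arbitrary $k$ of characteristic $p$ it is false that every field of transcendence degree $\leqslant 1$ over $k$ has Brauer group of period prime to $p$ --- for instance $\bF_p(t)$ has transcendence degree $1$ over $\bF_p$ and carries Brauer classes of order $p$ by class field theory. Tsen's theorem requires the ground field to be algebraically closed. The paper first reduces to $k=\overline{k}$ via $\ed_{\overline{k}}(\Br;p)\leqslant\ed_k(\Br;p)$ and only then applies Tsen to conclude $\Br(L_0)=0$ for $\trdeg_k(L_0)\leqslant 1$; you should insert the same reduction. With these two repairs your argument coincides with the paper's.
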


\begin{proof} First we will show that $\ed_k(\Br; p) \leqslant 2$. In other words, $\ed_k(\alpha; p) \leqslant 2$ for  
every field $K$ containing $k$ and every $\alpha \in \Br(K)$. By the Primary Decomposition Theorem, 
we can write $\alpha = \alpha_p + \beta$, where the index of $\alpha_p$ is a power of $p$, 
and the index of $\beta$ is prime to $p$. Let $K'/K$ be a finite prime-to-$p$ extension which splits $\beta$.
Then $\alpha_{K'} = (\alpha_p)_{K'}$, and $\ed_k(\alpha; p) = \ed_k(\alpha_{K'}; p)$. 
Here the last equality is an easy consequence of the definition
of $\ed_k(\alpha; p)$; see, e.g., the proof of \cite[Proposition 1.5(2)]{merkurjev2009essential}.
After replacing $K$ by $K'$ and $\alpha$ by $\alpha_{K'}$, we may assume without loss of generality 
that the index of $\alpha$ is a power of $p$. By a Theorem of Albert, $\alpha$ is represented by a cyclic algebra over $K$;
see~\cite[Theorem 9.1.8]{gille2017central}. In other words, it lies in the image of the natural (functorial) morphism
\[ j_r \colon H^1(K, \bZ/p^r \bZ) \times H^0(K, \mathbb G_m) \to   \Br(K) \]
for some integer $r \geqslant 1$; see \cite[p. 260]{gille2017central}. Recall that  $H^1(K, \bZ/p^r \bZ)$ is in a functorial 
bijective correspondence with isomorphism classes of cyclic \'etale algebras $L/K$ of dimension $p^r$ and $H^0(K, \mathbb G_m) = K^*$.
If $\tau$ is a generator of the (cyclic) Galois group of $L/K$, then $j_r([L/K], [b])$ is the cyclic algebra generated by $L$ and $u$, 
subject to relations $u a  = \tau(a) u$ and $u^{p^r} = b$. Here $[L/K]$ is the class of $L/K$ in $H^1(K, \bZ/p^r \bZ)$ and
$[b]$ is the class of $b$ in $H^0(K, \mathbb G_m)$. Then
\begin{align*} \ed(\alpha; p) & \leqslant \ed_k( H^1(\ast,\bZ/ p^r \bZ) \times H^0(K, \mathbb G_m) ; \, p) \\
                              & \leqslant \ed_k( H^1(\ast,\bZ/ p^r \bZ); \, p) + \ed_k( H^0(\ast , \mathbb G_m); \, p ) \\
                               & = \ed_k(\bZ/ p^r \bZ; \, p)  + \ed_k( H^0(\ast , \mathbb G_m); \, p ). 
                               \end{align*}
Here the first inequality follows from the fact that $\alpha$ lies in the image of $j_r$ and the second 
from~\cite[Proposition 2.2]{merkurjev-survey}. The equality on the third line follows from the definition of $\ed_k(\bZ/ p^r\bZ; \, p)$.                             
Now observe that $\ed_k(\bZ/ p^r \bZ; \, p) = 1$ by \cite[Theorem 1]{reichstein-vistoli-prime}. On the other hand,
\[ \ed ( H^0(\ast , \mathbb G_m); \, p ) \leqslant  \ed ( H^0(\ast , \mathbb G_m) ) \leq 1. \]
Here the first inequality is a special case of~\eqref{e.ed-vs-ed-at-p}, 
and the second follows from the fact that an object $b \in H^0(K , \mathbb G_m)$ descends to $k(b)$. 
We conclude that $\ed(\alpha; p) \leqslant 1 + 1 = 2$, as desired.

It remains to show that $\ed_k(\Br; p) \geqslant 2$. Assume the contrary. Then
\[ \ed_{\overline{k}}(\Br; p) \leqslant \ed_k(\Br; p) \leqslant 1, \] 
where $\overline{k}$ is the algebraic closure of $k$; 
see, e.g.,~\cite[Proposition 1.5(2)]{merkurjev2009essential}.
After replacing $k$ by $\overline{k}$, we may assume that $k$ is algebraically closed. 
Choose a class $\alpha \in \Br(K)$ of index $p$ for some field $K$ containing $k$
(for example, the class of a universal division algebra of degree $p$).  
By our assumption $\ed_k(\alpha; p) \leqslant 1$, i.e., there exists a prime-to-$p$
field extension $L/K$ such that $\ed_k(\alpha_L) \leqslant 1$. In other words,
$\alpha$ descends to $\alpha_0 \in \Br(L_0)$ for some intermediate field
$k \subset L_0 \subset L$ such that $\trdeg_k(L_0) \leqslant 1$. 
By Tsen's Theorem, $\Br(L_0) = 0$; see~\cite[Lemma 6.2.3 and Proposition 6.2.8]{gille2017central}.
We conclude that $\alpha_{L_0} = 0$ and thus $\alpha_L = 0$ in $\Br(L)$, i.e., $L/K$
splits $\alpha$. This is a contradiction, because $\alpha$ has index $p$ over $K$, and
$[L:K]$ is prime to $p$. Thus $\ed_k(\Br; p) \geqslant 2$.
\end{proof}

\section{A further consequence of Theorem~\ref{thm.main}}
\label{sect.exceptional_grps}

\begin{proposition}\label{prop.ed_non_symbol}
    Let $(F,\nu)$ be a strictly Henselian valued field over $k$. Let $\alpha \in H^d_p(F)$ for some $p\neq \Char k$. The following holds:
    \begin{enumerate}
        \item $\alpha \neq 0$ if and only if $\ed_k(\alpha;p) \geqslant  d$.
        \item $\alpha$ is not a symbol if and only $\ed_k(\alpha;p) \geqslant  d+2$.
    \end{enumerate}
    In particular, $\ed_k(\alpha)$ can never be equal to $d+1$.
\end{proposition}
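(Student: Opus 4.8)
The plan is to combine Theorem~\ref{thm.main_detailed} with Corollary~\ref{cor.ker2} (so that $\wedge\nu$ is an isomorphism onto $\bigwedge\Gamma/p^\infty$) and a careful analysis of the group $A_\omega$ for $\omega = \wedge\nu(\alpha)$ of homogeneous degree $d$. Since $(F,\nu)$ is strictly Henselian, $\alpha$ is monomial, so $\ed_k(\alpha;p) = \rho(\omega) = \dim_{\bF_p}(A_\omega/p)$, and everything becomes a statement about $\omega \in \bigwedge^d \Gamma/p^\infty$. Write $\omega = s_\pi^{-1}$-image; concretely $\omega = \wedge\nu(\alpha)$ and by Proposition~\ref{prop.computing_rho} we must understand $A_\omega$, the subgroup of $\Gamma/p^\infty$ generated by the degree-$1$ contractions $[\iota_\eta(\omega)]_1$.

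First I would prove part (1). If $\alpha = 0$ then $\ed_k(\alpha;p)=0 < d$ (assuming $d\ge 1$; the case $d=0$ is trivial since then $H^0_p(F)=\bQ_p/\bZ_p$ has objects of essential dimension $0$). Conversely, suppose $\alpha\neq 0$; since $\wedge\nu$ is an isomorphism, $\omega\neq 0$, so $\omega$ involves some pure wedge $\gamma_{i_1}\wedge\dots\wedge\gamma_{i_d}$ with nonzero coefficient. One shows $\rho(\omega)\ge d$: if $\omega\in\bigwedge W/p^\infty$ with $\rank W = \rho(\omega)$, then a nonzero element of $\bigwedge^d W/p^\infty$ forces $\rank W\ge d$, since $\bigwedge^d$ of a free module of rank $<d$ is zero (and the same for the $p$-divisible version, as $\bigwedge^d W/p^\infty = (\bigwedge^d W)/p^\infty$). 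Hence $\ed_k(\alpha;p) = \rho(\omega)\ge d$. This also shows $\ed_k(\alpha;p)$ can equal $d$ only when $\omega$ can be written inside $\bigwedge^d W$ for $W$ of rank exactly $d$.

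Next, part (2). If $\alpha$ is a symbol, say $\alpha = (a_1,\dots,a_d)_{p^n}$, then $\omega = \frac{1}{p^n}\otimes\nu(a_1)\wedge\dots\wedge\nu(a_d)\in\bigwedge^d W/p^\infty$ where $W$ is the saturation of $\langle\nu(a_1),\dots,\nu(a_d)\rangle$, which has rank $\le d$; so $\ed_k(\alpha;p)=\rho(\omega)\le d$, and combined with part (1) we get $\ed_k(\alpha;p)\le d$, ruling out $\ge d+2$ (and $d+1$). For the converse direction, the key is: if $\rho(\omega) = \dim_{\bF_p}(A_\omega/p) \le d+1$, then $\omega$ is (up to the iso $\wedge\nu$) a symbol. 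By Proposition~\ref{prop.computing_rho} we may assume $\Gamma = W$ with $\rank W = \rho(\omega) =: m \le d+1$, and $\omega\in\bigwedge^d W/p^\infty$. If $m\le d$ then $\bigwedge^d W/p^\infty$ is cyclic (rank $\binom{m}{d}$ is $0$ or $1$), so $\omega = \frac{1}{p^n}\otimes(\gamma_1\wedge\dots\wedge\gamma_d)$ up to scaling a basis vector, hence a symbol via $s_\pi$ — and in fact $m=d$ is the only nontrivial case. If $m = d+1$, then $\bigwedge^d W$ has rank $d+1$ with basis the "hat" wedges $\hat e_i := e_1\wedge\dots\hat{e_i}\dots\wedge e_{d+1}$; writing $\omega = \frac{1}{p^N}\otimes\sum_i c_i \hat e_i$, the vector $(c_1,\dots,c_{d+1})$ (read mod $p^N$) determines $\omega$, and one checks that $\omega$ is a pure wedge $\frac1{p^N}\otimes\gamma_1\wedge\dots\wedge\gamma_d$ — i.e. a symbol — exactly when the gcd condition on the $c_i$ relative to $p^N$ holds, which is forced by $\rho(\omega)\le d+1$: the minimality of $W$ (saturation, Proposition~\ref{prop.computing_rho}(1)) means $\gcd(c_1,\dots,c_{d+1},p^N)$-type reasoning lets us extract a primitive vector, so $\omega$ lies in $\bigwedge^d$ of a rank-$d$ summand after all unless the coefficient vector is itself primitive, in which case $\sum c_i \hat e_i$ is the image of a decomposable $d$-vector under the standard Hodge-star/complementation isomorphism $\bigwedge^d W \cong \bigwedge^1 W^* \otimes \bigwedge^{d+1} W$ (every element of $\bigwedge^1$ is decomposable). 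Transporting back through $\wedge\nu$ — using Corollary~\ref{cor.wedge.symbol} and injectivity — shows $\alpha$ is a symbol. Finally, the "in particular" is immediate: if $\ed_k(\alpha;p) = d+1$ then by (1) $\alpha\neq 0$, and by (2) contrapositive $\alpha$ is a symbol, so by the first half of (2) $\ed_k(\alpha;p)\le d$, a contradiction.

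I expect the main obstacle to be the rank-$(d+1)$ case of the converse to (2): showing that an element $\omega\in\bigwedge^d W/p^\infty$ with $\rank W = d+1$ and $\dim_{\bF_p}(A_\omega/p) = d+1$ must be decomposable. The clean way is to use the complementation isomorphism $\bigwedge^d W \xrightarrow{\sim} W^* \otimes \det(W)$ (valid since $\rank W = d+1$), under which decomposable $d$-vectors correspond precisely to decomposable elements of $W^*$ — but $W^*$ has rank $1$... no, $W^*$ has rank $d+1$, and \emph{every} element of a free rank-one module is decomposable, but $W^*$ is not rank one. Rather, decomposable $d$-vectors in $\bigwedge^d W$ ($\rank W=d+1$) correspond under complementation to decomposable $1$-vectors in $\bigwedge^1 W^* = W^*$, which is \emph{everything}; hence every $d$-vector in a rank-$(d+1)$ lattice is already decomposable. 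So the real content is just that $\rho(\omega) = d+1$ forces $\omega$ to "live" on a rank-$(d+1)$ (rather than larger) saturated $W$, which is exactly Proposition~\ref{prop.computing_rho}, and that the section $s_\pi$ turns a decomposable $\omega$ into an honest symbol. I would double-check the edge cases $d = 0, 1$ separately and confirm the statement "$\ed_k(\alpha)$ can never be $d+1$" is asserted for $\ed_k(\alpha;p)$ in the strictly Henselian setting, where $\ed_k(\alpha) = \ed_k(\alpha;p)$ by Theorem~\ref{thm.main_detailed}.
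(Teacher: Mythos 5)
Your proposal is correct, and for the key step of part (2) it takes a genuinely different route from the paper. Both proofs reduce, via Theorem~\ref{thm.main_detailed}, Proposition~\ref{prop.computing_rho} and Corollary~\ref{cor.ker2}, to showing that every element of $\bigwedge^d W/p^\infty$ is a pure tensor when $\rank W = d+1$. The paper proves this by showing the set of pure tensors is closed under addition: it arranges both summands to have saturated spans $U,V$, uses $\rank(U\cap V)\geqslant 2d-(d+1)=d-1$ to choose bases agreeing in the first $d-1$ slots, and collapses the sum into a single wedge. You instead invoke the complementation isomorphism $\bigwedge^d W \cong W^*\otimes \bigwedge^{d+1}W$, under which a $d$-vector corresponds to a functional whose kernel is a saturated rank-$d$ summand, so every integral $d$-vector in a rank-$(d+1)$ lattice is already decomposable; tensoring with $\bQ_p/\bZ_p$ then gives the claim in one stroke. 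Your argument is cleaner and more conceptual; the paper's is more elementary and self-contained (no duality needed). Two small remarks: the digression in your write-up about ``gcd conditions on the $c_i$'' is a red herring and should be deleted once the complementation argument is in place, since that argument needs no primitivity hypothesis; and the forward directions (symbols have $\ed\leqslant d$ because they descend to $k(a_1,\dots,a_d)$, and $\rank W<d$ forces $\bigwedge^d W=0$, hence $\alpha=0$) match the paper exactly. The degenerate case $d=0$ is excluded implicitly in the paper and need not be belabored.
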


\begin{proof}
Set $\omega := \wedge\nu(\alpha) \in \bigwedge^d \nu F/p^\infty$. By Theorem~\ref{thm.main_detailed},
\begin{equation}\label{eq.lemsymbol1}
\rho(\omega) = \ed_k(\alpha)=\ed_k(\alpha;p).
\end{equation}
Choose a subgroup $W\subset \nu F$ of rank $\rho(\omega)$ such that $\omega \in \bigwedge^d W/p^\infty$ .

  \smallskip
    $(1)$ If $\ed_k(\alpha;p) <  d$ then $\rank W= \rho(\omega) <  d$ and thus
    $\omega \in \bigwedge^d W/p^\infty =0$.
    Now Corollary~\ref{cor.ker2} tells us that $\alpha=0$.
    
    \smallskip
    $(2)$ If $\alpha = (a_1,\dots,a_d)_p$  is a symbol, then $\alpha$ descends to $k(a_1,\dots,a_d)$ and so $d\geqslant  \ed_k(\alpha;p)$. 
    
    Now assume that $d+1 \geqslant  \ed_k(\alpha;p)$. Our goal is to show that $\alpha$ is a symbol. By Corollary~\ref{cor.ker2}, it suffices to show $\omega$ is a pure tensor. By \eqref{eq.lemsymbol1}, we may enlarge $W$ to assume $\rank W = d+1$ and $\omega \in \bigwedge^d W/p^\infty$. Since $ \bigwedge^d W/p^\infty$ is generated by pure tensors it suffices to show the sum of two pure tensors is a pure tensor. A pure tensor in $\bigwedge^d W/p^\infty$ is of the form:
    $$\eta = \frac{a}{p^n} \otimes u_1\wedge \dots\wedge u_d,$$
    for some $u_1,\dots,u_d\in W$. Let $U = \langle u_1,\dots,u_d\rangle$ and denote by $U^{\sat}$ its saturation in $W$.  If $u'_1,\dots,u'_d$ are a basis of $U^{\sat}$, then $\eta = \frac{a'}{p^n}\otimes u'_1 \wedge \dots \wedge u'_d$ for some $a'\in \bZ$ (one can check this using the Smith normal form). Therefore we may assume $U$ is saturated. Let $\theta$ be another pure tensor:
    $$\theta = \frac{b}{p^n} \otimes v_1\wedge \dots\wedge v_d,$$
    for some $v_1,\dots,v_d\in W$ such that $V = \langle v_1,\dots,v_d\rangle$ is saturated in $W$. We will show $\eta + \theta$ is a pure tensor. Since $U$ and $V$ are saturated we have:
    $$(U\cap V)_{\bQ} = U_\bQ \cap V_\bQ,$$
    where $U_{\bQ} = U \otimes \bQ$ and similarly for $V_{\bQ}$ and $(U \cap V)_{\bQ}$.
    Therefore,
    $$\rank_\bZ(U\cap V) = \dim_\bQ(U_\bQ \cap V_\bQ) \geqslant \dim_{\bQ} (U_{\bQ}) + \dim_{\bQ} (V_{\bQ}) -
    \dim(W_{\bQ}) = 2d - \dim_\bQ (W_\bQ) = d-1.$$
    Since $U\cap V$ is clearly saturated in both $U$ and $V$ we can complete a basis of $U\cap V$ to a basis of $U$ and to a basis of $V$. In other words, we may assume $u_i = v_i$ for all $1\leqslant  i\leqslant  d-1$. Then 
    \begin{align*}
        \eta + \theta &= \frac{a}{p^n} \otimes u_1\wedge \dots\wedge u_d + \frac{b}{p^n} \otimes u_1\wedge \dots\wedge u_{d-1}\wedge v_d\\
                      &= \frac{1}{p^n}\otimes u_1\wedge \dots\wedge (a u_d+b v_d).       
    \end{align*}
    The right hand side is a pure tensor. This shows that any element in $\bigwedge^d W/p^\infty$ is a pure tensor and thus completes the proof of part (b).
\end{proof}

In light of Corollary~\ref{cor.ed_br_is_even}, one might guess that $\ed_k(\alpha)$ is divisible by $d$ for any $\alpha\in H^d_p(F)$. The following proposition shows that this is false for any $d \geqslant 3$.

\begin{proposition} \label{prop.possible_values_of_ed}
Let $F_n = k((t_0))\dots((t_{n-1}))$. Assume that $d \geqslant 3$. Then for any $n \geqslant  d + 2$ and $p\neq \Char k$,
there exists $\alpha \in H^d_p(F_n)$ such that $\ed_k(\beta;p) = n$. 
\end{proposition}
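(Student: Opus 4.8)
The plan is to produce an explicit \emph{monomial} class $\alpha \in H^d_p(F_n)$ whose associated group $A_\omega$ (for $\omega = \wedge\nu(\alpha)$) is all of $\Gamma/p$, and then to invoke Theorem~\ref{thm.main_detailed}, which for monomial classes gives $\ed_k(\alpha) = \ed_k(\alpha;p) = \dim_{\bF_p}(A_\omega/pA_\omega)$. Concretely, I would equip $F_n$ with the $(t_0,\dots,t_{n-1})$-adic valuation $\nu\colon F_n^* \to \Gamma = \bZ^n$, with standard basis $e_i = \nu(t_i)$ and dual basis $e^i \in \Gamma^*$, and take
\[ \alpha \;=\; \sum_{j=0}^{\,n-d}\,(t_j,\,t_{j+1},\,\dots,\,t_{j+d-1})_p \;\in\; H^d_p(F_n) . \]
Since $n \geq d$ this sum is nonempty and uses every variable $t_0,\dots,t_{n-1}$; by Corollary~\ref{cor.wedge.symbol} its image under $\wedge\nu$ is $\omega = \tfrac1p\otimes\sum_{j=0}^{n-d} e_j\wedge e_{j+1}\wedge\dots\wedge e_{j+d-1}$, and $\alpha = s_\pi(\omega)$ for the uniformizer with $\pi^{e_i} = t_i$, so $\alpha$ is monomial in the sense of Definition~\ref{def.monomial}. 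It thus remains to prove $A_\omega = \Gamma/p$; the inclusion $A_\omega \subseteq \Gamma/p$ is automatic, because every coefficient of $\omega$, hence of every contraction $\iota_\eta(\omega)$, has denominator $p$.

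For the reverse inclusion I would compute the contractions $\iota_{e^T}(\omega)$ for $(d-1)$-element subsets $T \subseteq \{0,\dots,n-1\}$, where $e^T$ denotes the pure wedge of the corresponding dual basis vectors; these generate $A_\omega$ since $\omega$ is homogeneous of degree $d$. Writing $S_j = \{j,\dots,j+d-1\}$ for the ``window'' indexing the $j$-th term, the formula in Definition~\ref{def.contraction} gives $\iota_{e^T}(e_{S_j}) = 0$ unless $T \subseteq S_j$, in which case it equals $\pm e_\ell$ with $\{\ell\} = S_j \setminus T$. Hence $\iota_{e^T}(\omega) = \tfrac1p\otimes\sum_{j\,:\,T\subseteq S_j}(\pm e_{\ell_j})$, a combination of \emph{distinct} basis vectors (distinct because $T\cup\{\ell_j\} = S_j$ determines $j$), so no cancellation occurs. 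The key point is that if $i$ is \emph{strictly interior} to a window, i.e. $j < i < j+d-1$ for some $0 \leq j \leq n-d$, and one takes $T = S_j \setminus \{i\}$, then $\min T = j$ and $\max T = j+d-1$ force $S_j$ to be the \emph{only} length-$d$ window containing $T$; therefore $\iota_{e^T}(\omega) = \pm\tfrac1p\otimes e_i$, so $\bar e_i \in A_\omega$. A short check — using exactly the hypotheses $d \geq 3$ and $n \geq d$ — shows that every $i$ with $1 \leq i \leq n-2$ is strictly interior to some window $S_j$ with $0 \leq j \leq n-d$, giving $\bar e_1,\dots,\bar e_{n-2} \in A_\omega$.

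It then remains to capture the two extremal generators $\bar e_0$ and $\bar e_{n-1}$. For these I would contract against $T = S_0 \setminus\{0\} = \{1,\dots,d-1\}$ and $T = S_{n-d}\setminus\{n-1\} = \{n-d,\dots,n-2\}$; in each case exactly two windows contain $T$, yielding $\iota_{e^T}(\omega) = \tfrac1p\otimes(\pm e_0 \pm e_d)$ and $\tfrac1p\otimes(\pm e_{n-1} \pm e_{n-d-1})$, respectively. Here the hypothesis $n \geq d+2$ enters: it guarantees that $d$ and $n-d-1$ lie in $\{1,\dots,n-2\}$, so $\bar e_d$ and $\bar e_{n-d-1}$ are already known to lie in $A_\omega$; subtracting them gives $\bar e_0, \bar e_{n-1}\in A_\omega$ as well. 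Therefore $A_\omega = \Gamma/p$, and Theorem~\ref{thm.main_detailed} yields $\ed_k(\alpha) = \ed_k(\alpha;p) = \dim_{\bF_p}(A_\omega/pA_\omega) = n$, as desired; taking $n$ not divisible by $d$ then supplies the counterexample to divisibility by $d$ announced before the statement.

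I expect the only genuine obstacle to be the combinatorial bookkeeping in the crux step: verifying cleanly that every index $1 \leq i \leq n-2$ is strictly interior to a window in range, and that the two extremal contractions produce exactly the claimed two-term sums. Everything else is a mechanical application of the contraction formula and of Theorem~\ref{thm.main_detailed}; in particular no strict Henselianity of $F_n$ is needed, only the observation that $\alpha$ lies in the image of $s_\pi$, i.e. that $\alpha$ is monomial.
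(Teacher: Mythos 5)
Your proof is correct, and it follows the same overall strategy as the paper -- exhibit an explicit monomial class $\alpha$, show $A_\omega=\bZ^n/p$ by contracting $\omega$ against pure wedges of dual basis vectors, and invoke Theorem~\ref{thm.main_detailed} (plus the trivial upper bound from $\alpha$ descending to $k(t_0,\dots,t_{n-1})$). The difference is in the choice of witness: the paper sums the symbols $(t_{i_1},\dots,t_{i_d})_p$ over all $d$-subsets $\{i_1<\dots<i_d\}$ of $\bZ/n$ with $i_1+\dots+i_d\equiv 0\pmod n$, whereas you sum over the $n-d+1$ consecutive ``windows'' $\{j,\dots,j+d-1\}$. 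Both reduce to a combinatorial statement about which basis vectors arise as contractions. The paper's verification is a five-case analysis (parity of $d$ and $n$, whether $2j=0$) showing that every residue lies in a $d$-subset summing to zero; yours is the observation that removing an interior element of a window determines the window uniquely (using $d\geqslant 3$), which captures $\bar e_1,\dots,\bar e_{n-2}$ directly, followed by the two extremal contractions where exactly two windows survive and the unwanted terms $\bar e_d,\bar e_{n-d-1}$ are already known to lie in $A_\omega$ (this is precisely where $n\geqslant d+2$ enters). Your bookkeeping is somewhat lighter and localizes the use of each hypothesis cleanly; the paper's class has the aesthetic feature of being invariant under the cyclic shift of the variables. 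Both arguments are complete.
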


\begin{proof} We claim that 
\[ \alpha = \sum_{i_1 + \ldots + i_d \equiv 0 \pmod{n}} (t_{i_1})_p  \cup \ldots \cup (t_{i_d})_p  \] 
has the desired property. Here the sum is taken over ordered $d$-tuples $(i_1, \ldots, i_d)$ such that
$0 \leqslant i_1 < \ldots < i_d \leqslant n-1$ and $i_1 + \ldots + i_d \equiv 0 \pmod{n}$. Let $\nu$ be the $(t_0,\dots,t_{n-1})$-adic valuation on $F_n$ and set
  \[ \omega:=  \wedge\nu (\alpha) = \frac{1}{p}\otimes  \sum_{i_1 + \ldots + i_d \equiv 0 \pmod{n}}
  e_{i_1}\wedge \ldots \wedge e_{i_d}, \] 
where $e_0, e_1, \ldots, e_{n-1}$ is the standard basis in $\bZ^n$.
In view of Theorem~\ref{thm.main_detailed} it suffices to show that
$A_\omega = \bZ^{n}/p \subset \bZ^{n}/p^\infty$. 
Let $e^1,\dots,e^n\in (\bZ^n)^*$ be the dual basis. From the definition of $\omega$, we see that when we contract $\omega$ with a tensor of 
the form $e^{j_1} \wedge \ldots \wedge e^{j_{d-1}}$, where $0 \leqslant j_1 < \ldots < j_{d-1}$,
we obtain $\pm e_j$, where $0 \leqslant j \leqslant n-1$ is uniquely determined by the congruence
$j \equiv -j_1 - \ldots - j_{d-1} \pmod{n}$. By definition, every 
$e_j$ of this form lies in $A_{\omega}$. To complete the proof, we need to show that every basis vector $e_0, \ldots, e_{n-1}$
can be obtained in this way. Equivalently, it remains to prove the following.

\smallskip
{\bf Claim:} Assume $d \geqslant 3$ and $n \geqslant d + 2$ are integers. Then
for every $j \in \bZ/ n $, there exists a subset $S \subset \bZ/ n $ consisting of $d$ distinct 
elements, $i_1, \ldots, i_d$, such that $j \in S$ and $i_1 + \ldots + i_d = 0$ in $\bZ/n$.

\smallskip
To prove the Claim, let $\cP$ be the set of pairs $\{ i , - i \}$, where $2i \neq 0$ in $\mathbb Z/ n$. If $n$ is odd,
then $2i = 0$ is only possible in $\bZ/ n$ if $i = 0$, and thus $|\cP| = (n-1)/2$. If $n = 2m$ is even, then 
the equation $2i = 0$ has two solutions in $\bZ/n$, $i =0$ or $i = m$. In this case
$|\cP| = (n-2)/2 = m - 1$.

\smallskip
{\bf Case 1.} $2j \neq 0$ in $\bZ/ n$, and $d$ is even. Take $S$ to be the union of $d/2$ pairs from $\mathcal{P}$, one of these pairs being $\{ \pm j \}$.
This is possible because we are assuming that $n \geqslant d + 2$, and thus
$| \mathcal{P} | \geqslant (n-2)/2 \geqslant d/2$. 

\smallskip
{\bf Case 2.} $2j \neq 0$ in $\bZ/ n$, and $d$ is odd. Take $S$ to be the union of $0$ and $(d - 1)/2$ 
pairs from $\mathcal{P}$, one of these pairs being $\{ \pm j \}$. Again, this is possible because 
$|\mathcal{P}| \geqslant d/2$.


\smallskip
{\bf Case 3.} $2j = 0$ and $d$ is odd. Take $S$ to be the union of $\{ 1, j, j-1 \}$ and $(d-3)/2$ pairs from $\mathcal{P}$,
other than $\{ \pm 1 \}$ and $\{\pm (j-1) \}$. Note that $1$, $j$ and $j-1$ are distinct in $\bZ/ n$; this readily follows from our assumption that $n \geqslant d + 2 \geqslant 5$. Note also that $(d-3)/2$ pairs, other than
$\{ \pm 1 \}$ and $\{\pm (j-1) \}$, exist in $\mathcal{P}$, because when $d$ is odd, 
the inequality $|\cP| \geqslant d/2$ can be strengthened to $|\cP| \geqslant (d+1)/2 = (d-3)/2 + 2$.

\smallskip
{\bf Case 4a.} $2j = 0$, $d$ is even, and $n$ is odd. In this case $j = 0$ in $\bZ/n$.
Moreover, for even $d$, the inequality $d \geqslant 3$ can be strengthened to $d \geqslant 4$, and for odd $n$,
the inequality $n \geqslant d +2 \geqslant 6$ can be strengthened to $n \geqslant 7$. Consequently, 
the four elements $0$, $1$, $2$ and $-3$ are
distinct in $\bZ/n$, and we define $S$ to be the union of $\{ 0, 1, 2, -3 \}$ and $(d-4)/2$ pairs from $\mathcal{P}$, other than
$\{ \pm 1 \}$, $\{ \pm  2 \}$ and $\{ \pm 3 \}$. Note that $(d-4)/2$ pairs with this property exist because for $n$ odd
$|\cP | = (n-1)/2 \geqslant (d+1)/2$. Since $d$ is even, this translates to 
$|\cP| \geqslant  (d+2)/2 = (d-4)/2 + 3$.

\smallskip
{\bf Case 4b.} $2j = 0$, $d$ is even, and $n = 2m$ is even. Here $j = 0$ or $m$, and we define $S$ as the union of $\{ 0, 1,  m-1, m  \}$
and $(d-4)/2$ pairs from $\mathcal{P}$, other than $\{ \pm 1 \}$ and $\{ m-1, m+1 \}$. The elements $0$, $1$, $m$, $m+1$ 
are distinct in $\bZ/ n$, because $n \geqslant d + 2 \geqslant 6$, and $\cP$ has $(d-4)/2$ pairs, other than 
$\{ \pm 1 \}$ and $\{ m-1, m+1 \}$, because $|\cP| \geqslant d/2 = (d-4)/2 + 2$.
\end{proof}

\begin{remark}\label{ex.possible_values_of_ed}
    Similarly, in light of Proposition~\ref{prop.ed_limit}, one might guess that if $\alpha\in H^d_p(F)$ is split by a field extension $E/F$ of degree $n$, then $\ed_k(\alpha) \leqslant dn$. This is also false. For example, let $d = 3$, $F = k((t_0))\dots((t_{2r}))$, and 
    \[ \alpha = (t_0)_p \cup \big( (t_1, t_2)_p + \ldots + (t_{2r-1}, t_{2r})_p \big) . \]  
    Then $\alpha$ is split by $E = F(\sqrt[p]{t_0})$, where $[E:F] = p$. On the other hand,
    contracting 
    \[ \omega:=  \wedge\nu (\alpha) = \frac{1}{p}\otimes e_0 \wedge [ e_{1}\wedge  e_{2}  + \ldots + e_{2r-1} \wedge e_{2r}] \]
with tensors of the form $e^{j_1} \wedge e^{j_2}$, as we did in the proof of Proposition~\ref{prop.possible_values_of_ed},
we see that $A_\omega = \bZ^{2r + 1}/p \subset \bZ^{2r + 1}/p^\infty$.
By Theorem~\ref{thm.main_detailed}, $\ed_k(\alpha) = \ed_k(\alpha; p) = 2r + 1$. Since $r$ can be taken to be an 
arbitrary positive integer, the ratio
$\displaystyle \frac{\ed_k(\alpha)}{[E:F]} = \frac{2r + 1}{p}$ can be arbitrarily large.
\end{remark}

\section{Anisotropic torsors}

In this section take a slight detour to prove a structural result about anisotropic $E_8$-torsors, Proposition~\ref{prop.ed_3_E8} below. This proposition will be used
in the proof of Theorem~\ref{thm.rost} in the next section. 

 Let $G$ be a connected quasi-split reductive group and $T\in H^1(F,G)$ a $G$-torsor over a field $F\in \Fields_k$. For any subgroup $H\subset G$, we will say that $T$ \emph{admits reduction of structure to} $H$, if it is in the image of $H^1(F,H)\to H^1(F,G)$. The torsor $T$ is called \emph{isotropic} if it admits reduction of structure to a proper parabolic subgroup of $G$ (or equivalently, if the twisted group ${}_T G$ is isotropic as an algebraic group \cite[Lemma 2.2]{ofek2022reduction}). Otherwise, $T$ is said to be \emph{anisotropic}. Recall that if $H$ is a finite constant group, $H^1(F,H)$ classifies Galois etale algebras of $F$ whose Galois group is a homomorphic image of $H$. The next lemma is inspired by~\cite[Proposition 7.2]{gille2009lower}. 

\begin{lemma}\label{lem.aniso} 
    Let $(F,\nu)$ be a strictly Henselian field with residue field $k$ and valuation ring $\cO$. Let $G$ be a connected quasi-split reductive group over a field $F_0 \subset \cO$. Assume that $G_\cO$ contains a constant finite subgroup $\iota: H\hookrightarrow G_{\cO}$ such that $H_{\overline{k}}$ is not contained in any proper parabolic subgroup of $ G_{\overline{k}}$. Then for any torsor $T \in H^1(F,H)$  corresponding to an $H$-Galois field extension $L/F$, the push-forward $\iota_*(T) \in H^1(F,G)$ is anisotropic. 
\end{lemma}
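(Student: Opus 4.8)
The plan is to show that if $\iota_*(T)$ were isotropic, then the twisted group $({}_{\iota_*(T)}G)$ would contain a proper parabolic subgroup defined over $F$, and to derive a contradiction by passing to the residue field. First I would recall that since $G$ is quasi-split over $F_0 \subset \cO$ and $H$ is a constant finite group, the torsor $\iota_*(T)$ is split by the $H$-Galois field extension $L/F$; so the cocycle defining $\iota_*(T)$ factors through $H(L) = H$ (constant values). The key point is that $L/F$ is \emph{unramified}: since $(F,\nu)$ is strictly Henselian with residue field $k$, any finite Galois field extension of $F$ is unramified and its residue extension $\overline{L}/k$ is again Galois with the same group $H$ (here I would use that the value group is unchanged and the residue field of $F$ is separably closed, so ramification and inertia are both trivial — this is \cite[Corollary A.28]{tignol2015value} combined with the fact that $H$ is a finite constant group, hence $L/F$ must be a subextension of the maximal unramified extension). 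Consequently the cocycle takes values in $H(\cO_L)$, where $\cO_L$ is the valuation ring of $L$, and we may form the twisted $\cO$-group scheme ${}_{\iota_*(T)} G_\cO$, which reduces modulo the maximal ideal to ${}_{\overline{T}} G_k$, where $\overline{T}\in H^1(k,H)$ is the torsor corresponding to the residue extension $\overline{L}/k$ (pushed forward to $G_k$ via $\iota \bmod \fm$).

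Next, suppose for contradiction that $\iota_*(T)$ is isotropic, i.e.\ ${}_{\iota_*(T)} G$ contains a proper parabolic $F$-subgroup $P$. Parabolic subgroups of a reductive group scheme over $\cO$ correspond bijectively to parabolic subgroups over the fraction field (this is a standard smoothness/properness statement for the scheme of parabolics — I would cite SGA3 or \cite{book_involution}), so $P$ extends to a proper parabolic $\cO$-subgroup $\mathcal{P} \subset {}_{\iota_*(T)} G_\cO$. Reducing modulo $\fm$ gives a proper parabolic $k$-subgroup of ${}_{\overline{T}} G_k$, hence ${}_{\overline{T}} G_k$ is isotropic over $k$, equivalently over $\overline{k}$. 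But over $\overline{k}$ the torsor $\overline{T}$ becomes trivial (every torsor over a separably closed field under a smooth group is trivial), so ${}_{\overline{T}} G_{\overline{k}} \cong G_{\overline{k}}$ as a group — however, the parabolic we obtained is one \emph{containing the image of $H$ up to the twisting}. More carefully: the reduction of structure to $H$ means the parabolic $\mathcal{P}$, after base change to $\overline{k}$ and untwisting, becomes a parabolic of $G_{\overline{k}}$ containing $\iota(H_{\overline{k}})$ (the twisting cocycle has values in $H$, which normalizes nothing a priori, but the standard argument — as in the cited \cite[Proposition 7.2]{gille2009lower} — shows that isotropy of the push-forward forces $H_{\overline{k}}$ to lie in a proper parabolic of $G_{\overline{k}}$, contradicting the hypothesis).

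Let me restate the heart of the final step, since it is the main obstacle. The subtle point is relating "${}_{\iota_*(T)} G$ has a proper parabolic over $F$" to "$H_{\overline k}$ lies in a proper parabolic of $G_{\overline k}$". The clean way: a $G$-torsor $S$ with reduction of structure to a subgroup $H$ is isotropic if and only if there is a parabolic $F$-subgroup $Q \subsetneq G_{\overline F}$, defined over $F^{\mathrm{sep}}$ and $\Gal(F)$-stable, whose stabilizer contains the image of the cocycle; since our cocycle has values in the constant group $H \subset G(F_0)$, this forces a single parabolic $Q \subsetneq G_{\overline F}$ that is normalized by all of $H$ — but $N_G(Q) = Q$, so $H \subset Q$, and base-changing to $\overline k$ (legitimate since everything is defined over $F_0 \subset \cO$ and parabolics of a reductive $\cO$-group are detected on the residue field) gives $H_{\overline k} \subset Q_{\overline k} \subsetneq G_{\overline k}$, contradicting the hypothesis that $H_{\overline k}$ is not contained in any proper parabolic of $G_{\overline k}$. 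The hard part will be making the descent-through-the-valuation-ring argument precise — namely that a $\Gal(F)$-stable proper parabolic of $G_{\overline F}$ normalized by $H$ descends to a proper parabolic of the $\cO$-group $G_\cO$ and hence, by specialization, produces a proper parabolic of $G_{\overline k}$ containing $H_{\overline k}$; here one uses that the $F$-points (equivalently $\cO$-points, by properness and the Henselian/valuative criterion) of the projective scheme $\mathrm{Par}(G)$ of parabolics interpolate between the generic and special fibers, together with the fact that $L/F$ being unramified keeps everything defined over $\cO$ after the unramified base change $\cO \to \cO_L$.
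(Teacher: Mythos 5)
Your overall strategy (specialize the isotropy data from $F$ to the residue field via properness of $G/P$ and then invoke the hypothesis on $H_{\overline k}$) matches the paper's, but two of your key claims are false, and the second one sits exactly where the real content of the lemma lives. First, the claim that any finite Galois extension $L/F$ of a strictly Henselian field is unramified, with Galois residue extension $\overline{L}/k$ of group $H$, is backwards: since the residue field $k$ is separably closed, $F$ admits no nontrivial inertial (unramified) extensions at all, so $L/F$ is totally ramified and the residue extension $l/k$ is purely inseparable. (In the intended application $L = F(t_0^{1/3},\dots,t_4^{1/3})$, which visibly enlarges the value group.) Consequently there is no residue torsor $\overline{T}\in H^1(k,H)$, and the reduction ${}_{\overline{T}}G_k$ that your argument is built around does not exist.

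Second, your ``clean way'' for the last step assumes the proper parabolic $Q\subset G_{F^{\sep}}$ witnessing isotropy can be taken simultaneously $\Gal(F)$-stable and stable under the twisted action; only the latter is given, namely $c_\sigma\,{}^{\sigma}Q\,c_\sigma^{-1}=Q$, and from this alone one cannot conclude $H\subset N(Q)=Q$. The paper kills the ${}^{\sigma}$ precisely by using the pure inseparability you denied: it writes the reduction of structure as $c(\sigma)\,{}^{\sigma}x=x$ for $x=gP\in (G/P)(L)$, lifts $x$ to $(G/P)(\cO_L)$ by the valuative criterion of properness, and reduces to the residue field $l$, where $\Gal(L/F)$ acts trivially because $l/k$ is purely inseparable. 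The equation then becomes $c(\sigma)\pi(x)=\pi(x)$, so the constant group $H$ fixes a point of $(G/P)(l)$ and hence lies in an $l$-conjugate of $P_l$, contradicting the hypothesis on $H_{\overline{k}}$. As written, your proposal supplies no valid substitute for this step.
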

\begin{proof}
    By assumption, there exists an isomorphism $c: \Gal(L/F)\to H$ and $\iota_*(T)$ is represented by the cocycle $\iota \circ c$.
    Assume that $\iota_*(T)$ admits reduction of structure to a parabolic subgroup $P\subset G$. Our goal is 
    to show $P=G$. Since $\iota_*(T)$ admits reduction of structure to $P$, there exists $g\in G(L)$ such that for all $\sigma\in \Gal(L/F)$:
    \begin{equation}
        g^{-1} c_\sigma {}^\sigma g\in P(L).
    \end{equation}
    Setting $x \in (G/P)(L)$ to be the coset represented by $g$, we see that for all $\sigma\in \Gal(L/F)$:
    \begin{equation}\label{e.aniso_E8_1}
        c(\sigma) {}^\sigma x = x.
    \end{equation}
    Since $\nu$ is Henselian, it has a unique extension to $L$. By abuse of notation, we will continue to denote this extended valuation by $\nu$. We will write $\cO_L$ for the valuation ring of $\nu$ in $L$ and $l$ be the residue field. Since $k$ is separably closed, $l$ is a purely inseparable extension of $k$. Since $G/P$ is projective, $x \in (G/P)(L)$
    can be lifted to a point in $(G/P)(\cO_L)$. By abuse of notation, we will simply say that $x \in (G/P)(\cO_L)$. Note that $\Gal(L/F)$ preserves $\nu$ because $L$ is Henselian. Therefore $\Gal(L/F)$ acts on $\cO_L$. The reduction homomorphism $\cO_L \to l$ is $\Gal(L/F)$-equivariant with respect to the trivial $\Gal(L/F)$-action on $l$ because $l/k$ is purely inseparable. Let $\pi: G(\cO_L)\to G(l)$ be the induced homomorphism. Applying $\pi$ to \eqref{e.aniso_E8_1} gives:
    $$\pi(x) = \pi(c(\sigma){}^\sigma x ) = \pi(c(\sigma))\pi(x).$$
    Since $H \subset G_{\cO}$ is constant, $H(\cO_L) = H(l)$ and the restriction of $\pi$ to $H$ is the identity. Therefore we get:
    $$\pi(x) = \pi(c(\sigma){}^\sigma x ) = c(\sigma)\pi(x).$$
    Since $l$ is separably closed, we can find $g\in G(l)$ such that $\pi(x)= g P(l)$. Since $c$ is an isomorphism we get $H g P(l) =gP(l)$
    and therefore: $$H_l \subset g P_l g^{-1}.$$ 
    Since we are assuming that $H_{\overline{k}}$ is not contained in any proper parabolic of $G_{\overline{k}}$, we conclude that $g P_l g^{-1} = G_l$ and thus $P = G$. This finishes the proof. 
\end{proof}
The next lemma gives a convenient sufficient condition for a subgroup $H\subset G$ to not be contained in any proper parabolic subgroup $P\subset G$.
\begin{lemma}\label{lem.finite_centralizer}
    Let $G$ be a connected semisimple group over a field $k$ and let $H\subset G$ be a finite subgroup. If the centralizer $C_G(H)$ is finite (i.e., 0-dimensional) and $|H|$ is coprime to $\Char k$, then $H$ is not contained in any proper parabolic subgroup of $G$.
\end{lemma}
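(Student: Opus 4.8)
The plan is to prove the contrapositive: if $H$ is contained in a proper parabolic subgroup $P \subsetneq G$, then $C_G(H)$ is infinite. So suppose $H \subset P$ for some proper parabolic $P$. Write $P = L \ltimes R_u(P)$ for a Levi decomposition, where $R_u(P)$ is the unipotent radical; since $P$ is proper, $R_u(P)$ is a nontrivial connected unipotent group, and it is normalized by $P$, hence normalized by $H$. The first step is to reduce to studying the conjugation action of $H$ on $R_u(P)$: it suffices to find a nontrivial element (or positive-dimensional subgroup) of $R_u(P)$ centralized by $H$, since that element will lie in $C_G(H)$ and $C_G(H)$ will then be infinite (it contains a nontrivial unipotent element, whose powers — or rather, the one-parameter unipotent subgroup through it, in characteristic $0$, or at least the cyclic group it generates — together with the fact that a single nontrivial unipotent element generates an infinite subgroup unless $\Char k = p > 0$; here one should instead use the fixed-point \emph{subscheme} $R_u(P)^H$ and show it is positive-dimensional).

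The core of the argument is a fixed-point count. Since $|H|$ is coprime to $\Char k$, the group $H$ is linearly reductive, so its action on the coordinate ring of $R_u(P)$, and more usefully on the associated graded, is completely reducible. Concretely, filter $R_u(P)$ by the descending central series; each successive quotient $V_i$ is a vector group on which $H$ acts linearly. By complete reducibility, $\dim V_i^H = \dim (V_i)_{\text{triv}}$, the multiplicity of the trivial representation. The key inequality to establish is that $\sum_i \dim V_i^H > 0$, i.e., that $H$ has a nonzero fixed vector somewhere in this filtration. For this I would use an averaging/character argument: $\dim V_i^H = \frac{1}{|H|}\sum_{h \in H} \operatorname{tr}(h \mid V_i)$ (valid since $H$ is linearly reductive of order prime to the characteristic, via the trace of the averaging idempotent), and summing over $i$ gives $\sum_i \dim V_i^H = \frac{1}{|H|}\sum_{h\in H}\operatorname{tr}(h \mid \mathfrak{r})$ where $\mathfrak{r} = \operatorname{Lie}(R_u(P)) = \bigoplus_i V_i$. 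Now $\operatorname{tr}(1 \mid \mathfrak{r}) = \dim \mathfrak{r} > 0$, and for $h \neq 1$ the eigenvalues of $h$ on $\mathfrak{r}$ are roots of unity, so each trace is bounded in absolute value by $\dim \mathfrak{r}$; but one needs strict positivity of the average, which does not follow from this crude bound alone.

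The honest route, and the step I expect to be the main obstacle, is to pin down $\operatorname{tr}(h \mid \mathfrak{r})$ using the structure of $G$ and $H$. Here is where root-system combinatorics enters: $\mathfrak{r}$ is a sum of root spaces $\mathfrak{g}_\alpha$ for $\alpha$ in the set $\Phi_u$ of roots of $R_u(P)$, and this set is stable under the $H$-action only after passing to a maximal torus normalized in a suitable sense — cleaner is to lift $H$ into a maximal torus $S$ of $G$ when possible, or to use that $H$ is contained in the normalizer of a maximal torus of $P$. Rather than this, the slickest argument: embed $H \subset P$ and consider instead the parabolic $P$ and its opposite $P^-$ with common Levi $L$; the tangent space to $G/P$ at the base point is $\mathfrak{g}/\mathfrak{p} \cong \mathfrak{r}^-$, and $G/P$ is a smooth projective $H$-variety, so by the Lefschetz-type fixed point formula (or, since $|H|$ is prime to the characteristic, by the fact that the Euler characteristic of the fixed locus equals $\chi(G/P) > 0$) the fixed locus $(G/P)^H$ is nonempty, giving a point whose stabilizer is a proper parabolic $Q$ with $H \subset Q$; iterating and using a dimension count one lands in a Borel $B \supset H$, so $H$ lies in a maximal torus times a unipotent group, and a direct computation of $C_G(H)$ in that setting — using that $H$ normalizes and acts trivially-on-a-subtorus — exhibits a positive-dimensional central torus or unipotent subgroup centralizing $H$. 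The delicate point throughout is ensuring that "order prime to $\Char k$" is used exactly where needed (linear reductivity of $H$, nonvanishing of fixed loci, no wild phenomena in the unipotent radical), and that the final positive-dimensional centralizer is genuinely in $C_G(H)$ and not merely in the centralizer of the image of $H$ in a Levi quotient.
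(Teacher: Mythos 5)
Your proposal does not close the argument, and the paper's proof is much shorter than either of your routes. The paper argues as follows: if $H \subset P$ with $P$ parabolic, then since $|H|$ is prime to $\Char k$ the group $H$ is linearly reductive, and a linearly reductive subgroup of $P$ is contained in (a conjugate of) a Levi subgroup $L \subset P$ (this is \cite[Lemma 11.24]{jantzen2004nilpotent}); if $P \neq G$ the center $Z_L$ contains a nontrivial split torus, and $Z_L \subset C_G(H)$ because $H \subset L$, contradicting finiteness of $C_G(H)$. The single fact you are missing is precisely this conjugation of $H$ into the Levi; once $H$ sits inside $L$, the positive-dimensional piece of the centralizer is the connected center of $L$, not anything in the unipotent radical.

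Both of your proposed routes have genuine gaps. First, your primary plan is to produce a positive-dimensional $H$-fixed subgroup of $R_u(P)$. This target can be empty: take $G = \SL_2$, $P = B$ the Borel, and $H = \mu_3$ inside the diagonal torus. Then $H$ acts on $R_u(B) \cong \mathbb{G}_a$ through the (nontrivial) root character, so $R_u(P)^H$ is trivial, yet $C_G(H)$ contains the whole diagonal torus. So the infinite part of the centralizer genuinely lives in the Levi, and no averaging or trace computation on $\mathrm{Lie}(R_u(P))$ can rescue the approach --- you yourself note that strict positivity of the average does not follow, and in fact it is false. Second, your fallback via fixed points on $G/P$ claims that iterating lands $H$ inside a Borel $B$. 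This is false in general: a Borel is solvable, so a nonsolvable finite $H$ (e.g.\ $A_5 \subset \PGL_2 \subset L$ for a Levi $L$ of some larger $G$) lies in a proper parabolic but in no Borel. Moreover, even granting $H \subset B$, your final step (``a direct computation \dots exhibits a positive-dimensional \dots subgroup centralizing $H$'') is exactly the assertion to be proved and is not carried out; the correct way to finish from $H \subset B$ would again be to conjugate $H$ into the maximal torus $T$ (the Levi of $B$) using linear reductivity and observe $T \subset C_G(H)$ --- which is the paper's argument specialized to $P = B$.
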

\begin{proof}
    Assume that $H$ is contained in a parabolic subgroup $P\subset G$.
    Since $H$ is of order prime to $p$ it is linearly reductive over $k$. Therefore $H$ is contained in a Levi subgroup $L\subset P$ (see \cite[Lemma 11.24]{jantzen2004nilpotent}). If $P\neq G$, then the center $Z_L$ of $L$ contains a split torus \cite[Proposition 20.6]{borel_lag}. This contradicts our assumption that $C_G(H)$ is finite because 
    clearly $Z_L \subset C_G(H)$. Therefore we must have $P=G$.
\end{proof}

We are now ready to proceed with the main result of this section. 

\begin{proposition}\label{prop.ed_3_E8}
Let $G$ be a split group of type $E_8$ over a field $k$ and assume $\Char k \neq 2,3$. Let $T \in H^1(F,G)$ be a torsor over a strictly Henselian field $F\in \Fields_k$ over $k$. The following are equivalent:
    \begin{enumerate}
        \item $\ed_k(T;3)\geqslant  5$.
        \item $\ed_k(R_G(T);3)\geqslant  5$.
        \item $T_L$ is anisotropic for any prime-to-$3$ extension $F\subset L$.
    \end{enumerate}
\end{proposition}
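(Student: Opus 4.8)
The plan is to establish the cycle of implications $(3) \Rightarrow (1) \Rightarrow (2) \Rightarrow (3)$, since $(1) \Rightarrow (2)$ is essentially free. For $(1) \Rightarrow (2)$, I would simply invoke the inequality~\eqref{eq.lowerbound_coh_invariant}: the Rost invariant $R_G \colon H^1(\ast, G) \to H^3(\ast, \bQ_3/\bZ_3(2))$ is a normalized cohomological invariant, so $\ed_k(T;3) \geqslant \ed_k(R_G(T);3)$, and the latter is $\geqslant 5$ once the former is. Wait — that gives $(1) \Rightarrow (2)$ in the wrong direction; let me instead run the implications as $(2) \Rightarrow (1)$ trivially? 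No: $(2) \Rightarrow (1)$ would need $\ed(T) \geqslant \ed(R_G(T))$, which is exactly~\eqref{eq.lowerbound_coh_invariant}, so in fact $(2) \Rightarrow (1)$ is the free one. So I would prove $(1) \Rightarrow (3)$, then $(3) \Rightarrow (2)$, and note $(2) \Rightarrow (1)$ follows from~\eqref{eq.lowerbound_coh_invariant} applied over every prime-to-$3$ extension.

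For $(1) \Rightarrow (3)$, I argue by contraposition: if $T_L$ is isotropic for some prime-to-$3$ extension $L/F$, then $T_L$ admits reduction of structure to a proper parabolic subgroup $P \subsetneq G_L$. Since $(F,\nu)$ is strictly Henselian, $L$ inherits a strictly Henselian valuation, and I would like to bound $\ed_k(T_L)$ by something less than $5$ — the point being that a torsor reducing to a parabolic $P$ reduces further to a Levi $L' \subset P$ (using linear reductivity or the usual section argument), and Levi subgroups of $E_8$ have small essential dimension for $3$-torsion purposes. Concretely one would want $\ed_k(T_L; 3) \leqslant 4$ for any such reduction; this should follow from known bounds on essential dimension of the Levi factors (products of tori with lower-rank semisimple groups whose relevant $\ed$ at $3$ is controlled), combined with $\ed_k(T;3) = \ed_k(T_L; 3)$ since $L/F$ is prime-to-$3$. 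The cleanest route may be to observe that an isotropic $E_8$-torsor over a strictly Henselian field with separably closed residue field is actually governed by its residue data plus bounded ramification, forcing $\ed_k(T_L) \leqslant 4$.

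For $(3) \Rightarrow (2)$, I would use the anisotropic hypothesis to produce, via the Chernousov–Garibaldi formula for the Rost invariant, a cohomology class $R_G(T_L) \in H^3_3(L)$ whose image under $\wedge\nu$ is "complicated enough" — specifically, I would want $A_\omega/3A_\omega$ to have dimension $\geqslant 5$ over $\bF_3$, so that Theorem~\ref{thm.main_detailed} gives $\ed_k(R_G(T);3) = \ed_k(R_G(T_L);3) \geqslant 5$. Here Lemma~\ref{lem.aniso} and Lemma~\ref{lem.finite_centralizer} enter: an anisotropic $E_8$-torsor of the relevant type arises (after a prime-to-$3$ extension) from pushing forward an $H$-Galois extension for a suitable finite subgroup $H$ with finite centralizer, and for such torsors the Rost invariant is computed explicitly as a sum of symbols tied to the ramification of $L/F$.

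\textbf{Main obstacle.} The hard part will be $(3) \Rightarrow (2)$: one must show that anisotropy alone forces the Rost invariant to have essential dimension at least $5$, which requires a structural classification of anisotropic $E_8$-torsors over strictly Henselian fields (up to prime-to-$3$ extension) fine enough to pin down $\wedge\nu(R_G(T))$ — presumably showing any such torsor is, after a prime-to-$3$ base change, the push-forward of a Galois extension with ramification rank $\geqslant 5$ in the value group, so that the associated $\omega$ contracts onto all of $\bZ^5/3 \subset \bZ^5/3^\infty$. Controlling that the Rost invariant genuinely "sees" all five parameters, rather than collapsing, is the delicate point; the explicit formulas of Chernousov and Garibaldi, together with Lemmas~\ref{lem.aniso} and~\ref{lem.finite_centralizer}, are exactly the tools meant to make this tractable.
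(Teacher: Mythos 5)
Your skeleton for two of the three implications is essentially the paper's: $(2)\Rightarrow(1)$ is indeed immediate from \eqref{eq.lowerbound_coh_invariant}, and $(1)\Rightarrow(3)$ by contraposition via reduction of an isotropic torsor to a (maximal) parabolic and then to the derived group of a Levi is the right move. But even there you still owe the substance: one must check that $H^1(F,L')\to H^1(F,P)$ is surjective (this uses that $F$ may be taken $3$-special, hence perfect, plus Hilbert 90 for the split torus $L/L'$), and then verify case by case that each of the eight maximal Levi derived subgroups of $E_8$ has $\ed_k(L';3)\leqslant 4$. The critical case is $L'=E_7^{\rm sc}$ with $\ed_k(E_7^{\rm sc};3)=4$, so the bound is tight and cannot be asserted generically as "Levi subgroups have small essential dimension."

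The genuine gap is $(3)\Rightarrow(2)$, which you correctly flag as the obstacle but for which your proposed route would not succeed. You want a structural classification of anisotropic $E_8$-torsors over strictly Henselian fields (up to prime-to-$3$ extension) as push-forwards of Galois extensions with large ramification, so as to compute $\wedge\nu(R_G(T))$ explicitly; no such classification is available, and Lemmas~\ref{lem.aniso} and~\ref{lem.finite_centralizer} run in the opposite direction — they manufacture anisotropic torsors from Galois extensions, they do not show that every anisotropic torsor arises this way. The paper sidesteps all of this: after replacing $F$ by its prime-to-$3$ closure (which changes none of (1), (2), (3)), $F$ is $3$-special, and a theorem of Garibaldi--Semenov (\cite[Theorem 10.24]{garibaldi2016shells}) says that $R_G(T)$ is a symbol if and only if $T$ is isotropic. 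Combined with Proposition~\ref{prop.ed_non_symbol}(2) — over a strictly Henselian field a class in $H^3_3$ fails to be a symbol if and only if its essential dimension at $3$ is at least $3+2=5$ — this gives $(3)\Leftrightarrow(2)$ in one stroke, with no need to pin down $\wedge\nu(R_G(T))$ or the group $A_\omega$ at all. The missing idea in your proposal is precisely this symbol criterion; without it, the implication $(3)\Rightarrow(2)$ remains unproved.
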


\begin{proof}
Passing from $F$ to its prime-to-$3$ extension $F^{(3)}$ does not affect the validity of 
assertions (1), (2) and (3). Thus we may assume without loss of generality that $F$ is a $3$-special field.
In particular, since $\Char F\neq 3$, this implies that $F$ is perfect. By assumption, $F$ is equipped with a Henselian valuation $\nu$ that is trivial on $k$. Therefore the residue field of $\nu$ has the same characteristic as $k$. Hensel's lemma implies that $F$ contains all primitive roots of unity of order $3^n$ for $n\in\bN$. Let $k\subset k'$ be the field generated by all roots of unity in $F$. We replace $k$ by $k'$ without effecting assertions $(1)$ and $(2)$ by \eqref{e.ext_of_scalar}. This allows us to view the Rost invariant as a natural homomorphism $H^1(\ast,G)\to H^3_3(\ast)$ because $\bQ_3/\bZ_3(2)\cong \bQ_3/\bZ_3(3)$ as Galois modules over $k$.

\smallskip
    (3) $\iff$ (2)  By Proposition~\ref{prop.ed_non_symbol}, $\ed_k(R_G(T);3) \geqslant  5$ if and only if $R_G(T)$ is not a symbol. Since $F$ is $3$-special, \cite[Theorem 10.24]{garibaldi2016shells} implies that $R_G(T)$ is a symbol if and only if $T$ is isotropic. Therefore $T$ is anisotropic if and only if $\ed_k(R_G(T);3)\geqslant  5$

\smallskip
    (2)$\implies$ (1) follows directly from \eqref{eq.lowerbound_coh_invariant}.

\smallskip
    (1)$\implies$ (3) We assume $T$ is isotropic and show $\ed_k(T;3)\leqslant  4$. Since $T$ is isotropic, it admits reduction of structure to some proper parabolic subgroup $P\subset G$. Assume without loss of generality that $P$ is maximal amongst the proper parabolic subgroups.  Let $L\subset P$ be a Levi subgroup of $P$ and let $L'\subset L$ be the corresponding derived subgroup (it exists by \cite[Theorem 25.6]{milne2017algebraic}). Since $F$ is perfect, $H^1(F,L) \to H^1(F,P)$ is bijective \cite[Lemme 1.13]{sansuc1981groupe}. Note that $Z(L)^{\circ}$ is a split torus because it is contained in a maximal torus of $G$.  Therefore,
    $L/L'$ is a split torus as a homomorphic image of $Z(L)^{\circ}$ \cite[Chapter V, Theorem 15.4]{borel_lag}, and $H^1(F,L')\to H^1(F,L)$ is surjective by Hilbert $90$. Therefore the composition $H^1(F,L') \to H^1(F,P)$ is surjective and $T$ admits reduction of structure to $L'$. Therefore $\ed_k(T;3)\leqslant  \ed_k(L';3)$ and it suffices to prove:
    \begin{equation}\label{eq.ed_of_levi}
        \ed_k(L';3) \leqslant  4.
    \end{equation}
    Since $P$ is a maximal proper parabolic of $G$, the Dynkin diagram of $L'$ is obtained from the Dynkin diagram of $E_8$ by removing one vertex \cite{tits1965classification}. By \cite[Corollary 5.4(b)]{springer-steinberg},
    $L'$ is simply connected. Hence, $L'$ is a product of simple simply connected groups.
    Removing each of the eight vertices in the Dynkin diagram of $E_8$, we obtain the following possibilities for $L'$.

   \smallskip
   (i) $D_7$, (ii) $A_1\times A_6$, (iii) $A_2\times A_1\times A_4$, (iv) $A_7$, (v) $A_4\times A_3$, 
   (vi) $D_5\times A_2$, 

   \smallskip
   (vii) $E_6\times A_1$, (viii) $E_7$.

   \smallskip \noindent
   That is, in case (i), $L'$ is isomorphic to $\Spin_{14}$, it case (ii) to $\SL_2 \times \SL_7$, etc.
    We will now check that the the inequality~\eqref{eq.ed_of_levi} holds in each of these eight cases.
    Recall that if $G = H_1\times \dots\times H_r$, then 
    $$\ed_k(G;3) \leqslant  \ed_k(H_1;3) + \dots + \ed_k(H_r;3); $$
    see, e.g.,~\cite[Proposition 3.3]{merkurjev-survey}. The simply connected simple group of type $A_n$ is
    $\SL_n$. This group has essential dimension $0$. In cases (ii) - (v), $L'$ is a direct product of simply connected simple groups of type $A$ and thus $\ed_k(L') = \ed_k(L'; 3) = 0$.
    The simply connected group of type simple group of type $D_n$ is $\Spin_{2n}$. Since $3$ is not a torsion prime
    for these groups, we have $\ed_k(\Spin_{2n}; 3) = 0$; see~\cite[Proposition 3.16]{merkurjev-survey}. We conclude that $\ed_k(L'; 3) = 0$ in cases (i) and (vi) as well. In case (vii), we have 
    \[ \ed_k(L'; 3) = \ed_k(E_6^{\rm sc} \times \SL_2; 3) \leqslant \ed_k(E_6^{\rm sc}; 3) + \ed_k(\SL_2; 3) =
    \ed_k(E_6^{\rm sc}; 3) = 3 . \]
For the last equality, see~\cite[Section 3h]{merkurjev-survey}. In case (viii),
\[ \ed_k(L'; 3) = \ed_k(E_7^{\rm sc}; 3) = 4, \]
where the last equality is again taken from~\cite[Section 3h]{merkurjev-survey}.  We conclude that~\eqref{eq.ed_of_levi} holds in each of the cases (i) - (viii).   
\end{proof}

\section{Proof of Theorem~\ref{thm.rost} and a remark on spin groups}
\label{sect.rost}

For the reader's convenience, we begin by reproducing the statement of Theorem~\ref{thm.rost}.

\smallskip
{\bf Theorem~\ref{thm.rost}.} Let $k$ be a base field of characteristic $\neq 2$ and 
$F_n$ be the iterated Laurent series field $F_n = k((t_0))((t_2)) \ldots ((t_{n-1}))$. Then there exist
(i) an $E_7^{\rm sc}$-torsor $T_1 \to \Spec(F_7)$, (ii) an $E_8$-torsor $T_2 \to \Spec(F_9)$, and (iii) an $E_8$-torsor $T_3 \to \Spec(F_5)$
such that (i) $\ed_k(R_{E_7^{\rm sc}}(T_1); 2) = 7$, (ii) $\ed_k(R_{E_8}(T_2); 2) = 9$, and (iii) $\ed_k(R_{E_8}(T_3); 3) = 5$, respectively.

Here in (iii) we are assuming that $k$ contains a primitive $3$rd root of unity. 

\begin{proof}
We assume at first that $k$ is separably closed. In this case, The functors $H^3(\ast,\bQ_p/\bZ_p(2))$ and $H^3_p(\ast)$ are naturally isomorphic because $\bQ_p/\bZ_p(2)$ and $\bQ_p/\bZ_p(3)$ are isomorphic as Galois modules for all $p\neq \Char k$. Therefore we can and shall see $R_G$ as taking values in $H^3_p$ where $p =2$ in parts (i),(ii) and $p=3$ in part (iii).

 \smallskip
\textbf{Part (i):} Let $G$ be a simply connected split group of type $E_7$ over $k$. Let $L = k(s_0,\dots,s_6)$ be a field of rational functions in $7$ independent variables and $H$ a quasi-split simply connected group of type $E_6$ over $L$ that is split by the extension $L(\sqrt{s_0})$.
In \cite[Section 6.2.3]{chernousov2003kernel}, Chernousov constructed a torsor $S\in H^1(L,H)$ such that:
$$R_H(S) = (s_0)_2\cup [(s_1,s_3)_2+ (s_2 s_3 s_5,s_4)_2 + (s_5,s_6)_2].$$
Moreover, there is an embedding $\iota :H\to G$ such that $R_G(\iota_*(S))= R_H(S)$ \cite[Proposition 3.6]{garibaldi2001rost} (see also \cite[Example A.3]{garibaldi2003cohomological}). 
We make a rational change of variables $t_0 = s_0$, $t_1=s_1, t_2= s_3, t_3= s_2 s_3 s_4, t_4= s_4, t_5= s_5, t_6 = s_6$. 
Clearly $L=k(t_0,\dots,t_6)$ and we have:
$$R_G(\iota_*(S)) = (t_0)_2 \cup [(t_1,t_2)_2 + (t_3,t_4)_2 + (t_5,t_6)_2].$$
We set $T_1 = \iota_*(T_0)_{F_7}$ and observe that $\ed_k(R_G(T_1);2) = 7$ by Remark~\ref{ex.possible_values_of_ed}.

\smallskip
\textbf{Part (ii):} Let $G$ be a split group of type $E_8$ over $k$. In \cite[Corollary 11.3]{garibaldi2009orthogonal} Garibaldi proves there exists a torsor $T_2\in H^1(F_9,G)$ such that:
$$R_G(T_2) = (t_0)_2\cup [(t_1 ,t_2)_2 + \dots + (t_7 ,t_8)_2] .$$
We have $\ed_k(R_G(T_2);2)  = 9$ by Remark~\ref{ex.possible_values_of_ed} again.

\smallskip
\textbf{Part (iii):} Let $G= E_8$ as in the previous part.  Denote $H := \bZ^5/3$. By \cite[Lemma 11.5]{griess1991elementary}, there is an embedding $\iota :H\hookrightarrow G$ such that $\iota(H)$ is self-centralizing. Let $K_5 = k(t_0,\dots,t_4)$ be a rational field in $5$ variables. We denote by $S \in H^1(K_5,H)$ the torsor corresponding to the $H$-Galois extension $L_5 = K_5(t_0^{1/3},\dots,t_4^{1/3})$ of $K_5$. Let $T_3 = \iota_*(S)_{F_5}$. Inequality \eqref{e.morphism} implies:
$$\ed_k(R_{G}(T_3);3) \leqslant \ed_k(R_{G}(T_3)) \leqslant \ed_k(T_3) \leqslant \trdeg_k(K_5) = 5.$$
Let $\nu$ be the $(t_0,\dots,t_4)$-adic valuation on $F_5$. Since $k$ is separably closed, $(F_5,\nu)$ is strictly Henselian. Let $E/F_5$ be an arbitrary prime-to-$3$ extension. There is a unique extension of $\nu$ to $E$ which we will again denote $\nu$ by abuse of notation. Note that $(E,\nu)$ is strictly Henselian \cite[Proposition A.30]{tignol2015value}. The torsor $S_E$ corresponds to the $H$-Galois field extension $E_5:= E(t_0^{1/3},\dots,t_4^{1/3}) = E\otimes_{F_5} F_5(t_0^{1/3},\dots,t_4^{1/3})$ of $E$. The inclusion $H\subset G_k$ extends to an inclusion $H\subset G_{\cO}$ where $k\subset \cO \subset E$ is the valuation ring of $\nu$. Moreover, $C_{G}(H) = H$ implies that $H$ is not contained in any proper parabolic of $G_{\overline{k}}$ by Lemma~\ref{lem.finite_centralizer}. Therefore $T_{3,E}= \iota_*(S_E)$ is anisotropic by Lemma~\ref{lem.aniso}. 
This shows that $T_3$ satisfies the condition of Proposition~\ref{prop.ed_3_E8} and therefore $\ed_k(R_{G}(T_3);3) \geqslant 5$. 

\smallskip
We now explain how to generalize to the case where $k$ is not separably closed. We will focus on part (iii), which is the most complicated.  Parts (i) and (ii) are handled in an analogous way.  Denote $F'_5 = k^{\sep}((t_0))\dots((t_{4}))$, $K_5 = k(t_0,\dots,t_4)$ and let $G= E_8$.  The above proof gives us a $G$-torsor $T'_3$ over $F'_{5}$  whose Rost invariant satisfies $\ed_{k^{\sep}}(R_{G}(T'_3,3)) = 5$. To check that $T'_5$ descends to $K_5$, we note that the embedding $\iota: H \hookrightarrow G$ is defined over $k$ because $k$ contains a primitive $3$-rd root of unity. We have $T'_3 = \iota_*(S)_{F'_5}$, where $S \in H^1(K_5,H)$ is the torsor corresponding to the $H$-Galois extension $L_5 = K_5(t_0^{1/3},\dots,t_4^{1/3})$ of $K_5$. We set $T_3 = \iota_*(S)_{F_5}$. The inequality $\ed_k(R_G(T_3);3)\leqslant 5$ follows from \eqref{e.morphism} because $\ed_k(T_3) \leqslant \trdeg_k(K_5) = 5$. Moreover, $T_{3,F'_5} = T'_3$ and so \eqref{e.ext_of_scalar} gives us:
$$\ed_k(R_G(T_3);3) \geqslant \ed_k(R_G(T_3)_{F'_5};3) = \ed_{k^{\sep}}(R_G(T'_3);3) = 5.$$
We conclude that $\ed_k(R_G(T_3);3) = 5$.
\end{proof}

\begin{remark} \label{rem.spin} 
One may wonder if similar techniques can be used to recover the exponential lower bounds 
\begin{equation} \label{e.brv}
\ed(\Spin_n; 2) \geqslant \begin{cases} 2^{(n-1)/2} - \frac{(n-1)n}{2} & \text{if $n$ is odd, and} \\
2^{(n-2)/2} - \frac{(n-1)n}{2} & \text{if $n$ is even}  \end{cases}
\end{equation}
from~\cite[Theorem 3.3]{brv-annals}. The answer is ``no".
(Here we assume that $k$ is of characteristic $0$ and $n \geqslant 15$. In the case, where $n$ is divisible by $4$, the lower bound in~\eqref{e.brv} is not optimal, but that shall not concern us here.) 

Indeed, recall that the Rost invariant for $R_G \colon H^1(F, \Spin_n) \to H^3(F, \mu_2)\subset H^3(F, \bQ/\bZ(2))$ for $G = \Spin_n$ may be computed as follows.  
Let $T \in H^1(F, \Spin_n)$ be a torsor. The image of $T$ under the natural map $H^1(F, \Spin_n) \to H^1(F, \SO_n)$ is represented by a quadratic form $q$ of rank $n$, with trivial discriminant and trivial Hasse-Witt invariant. 
Then $R_G(T) = {\rm ar}(q)$ is the Arason invariant of $q$; see \cite[p.~437]{knus1998involutions}.
In other words, if we represent $q$ as a sum of 3-fold Pfister forms 
\begin{equation} \label{e.3-pfister}
q = \langle\langle a_1, a_2, a_3 \rangle \rangle + \ldots +  \langle\langle a_{3r - 2} , a_{3r-1},  a_{3r} \rangle \rangle
\end{equation}
in the Witt ring $W(F)$, then 
\[ {\rm ar}(q) = (a_1)_2 \cup (a_2)_2 \cup (a_3)_2 + \ldots + (a_{3r-2})_2 \cup (a_{3r-1})_2 \cup (a_{3r})_2 \in H^3(K, \mu_2). \]
Note that $q$ can be represented in the form~\eqref{e.3-pfister} by a theorem of Merkurjev~\cite{merkurjev-I^3}.

Now assume $F$ is equipped with a strictly Henselian valuation.
By a theorem of Raczek~\cite[Theorem 1.13]{raczek}, we can assume $r \leqslant n^2/8$ in~\eqref{e.3-pfister}.
Since $R_G(T)= {\rm ar}(q)$ descends to $k(a_1,\dots,a_{3r})$ we conclude:
\[ \ed_k(R_G(T);2) \leqslant \ed_k(R_G(T)) \leqslant 3r \leqslant 3n^2/8. \]
For large $n$ this is smaller than the lower bound on $\ed_k(\Spin_n;2)$ in~\eqref{e.brv}.
\end{remark}
\section{Appendix}

\subsection{The norm residue isomorphism}
Let $F$ be a field. Denote the separable closure of $F$ by $F^{\sep}$.
Choose compatible a compatible system of roots of primitive $m$th unity $\zeta_m \in F^{\sep}$, as $m$ ranges over the positive integers prime to $\Char(F)$. That is, $\zeta_{m_1 m_2}^{m_1} = \zeta_{m_2}$, 
as in Section~\ref{sect.notational-conventions}.

Now fix one particular integer $m$ prime to $\Char(F)$.
Recall that the Kummer map is the connecting homomorphism :
\begin{equation}\label{e.kummer}
    \partial_m: F^* \to H^1(F,\mu_{m})
\end{equation}
induced by the short exact sequence $1 \to \mu_{m} \to (F^{\sep })^* \overset{\cdot m}{\to }(F^{\sep})^* \to 1$  \cite[Chapter 2, Section 1.2]{serre1997galois}. The norm residue isomorphism theorem states that for all $m$ coprime to $\Char k$ and $d\in \bN$ there is an isomorphism:
$$K_d(F)/m \to H^d(F,\mu^{\otimes d}_{m}), \ \ \{a_1,\dots,a_d\} \mapsto \partial_m(a_1)\cup \dots\cup \partial_m(a_d).$$
Here $K_d$ is Milnor $K$-theory. We will denote by $h^d_{m} : K_d(F) \to H^d(F,\bZ/{m}(d))$ the Galois symbol,
given by the composition: 
$$K_d(F)\to H^d(F,\mu^{\otimes d}_{m}) \to H^d(F,(\bZ/m(1))^{\otimes d}) \to   H^d(F,\bZ/m (d)).$$
For $d =0$, we set  $h^0_m : K_0(F) = \bZ \to \bZ/m$ to be the reduction modulo $m$ homomorphism.  Note that $h^d_{m}$ depends on our choice of the root of unity $\zeta_m$, which induces an isomorphism $\mu_{m}\to\bZ/m(1)$. 

We will use the following notation for the symbol defined by $a_1,\dots,a_d\in F^*$:
$$(a_1,\dots,a_d)_{m}:= h^d_{m} (\{a_1,\dots,a_d\}).$$
Putting these together for all $d \geqslant 0$ yields a homomorphism of graded groups
$$h_m:= \oplus h^d_m : K(F) \overset{}{\to}\bigoplus_d H^d(F,\bZ/m (d)).$$ 
As we vary $m$, the homomorphisms $h_m$ transform like our chosen roots of unity $\zeta_m$.
\begin{lemma}\label{lem.pow}
For all $m \mid n$ coprime to $\Char F$ we have:
$$i_{m,n} \circ h_{m} = \frac{n}{m} h_{n}$$
where $i_{m,n} : \bigoplus_d H^d(F,\bZ/m (d))\to \bigoplus_d H^d(F,\bZ/n (d))$ is the natural inclusion.
\end{lemma}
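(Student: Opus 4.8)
The plan is to reduce everything to the degree~$1$ case, where the statement becomes a concrete computation with the Kummer maps $\partial_m$ and $\partial_n$, and then propagate it to all degrees via multiplicativity of the Galois symbol. First I would unwind the definitions: by construction $h_m = \bigoplus_d h^d_m$ is the composite of the norm residue map $K_d(F)/m \to H^d(F, \mu_m^{\otimes d})$ with the isomorphism $\mu_m^{\otimes d} \cong (\bZ/m(1))^{\otimes d} \cong \bZ/m(d)$ induced by the chosen root of unity $\zeta_m$. Since both $h_m$ and $h_n$ are ring homomorphisms out of the $K$-theory ring $K(F)$ (the Galois symbol is multiplicative for the cup product), and $K(F)$ is generated in degree~$1$ by the symbols $\{a\}$, $a \in F^*$, together with $K_0(F) = \bZ$, it suffices to check the identity $i_{m,n} \circ h_m = \tfrac{n}{m} h_n$ on these generators. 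In degree~$0$ this is just the statement that the composite $\bZ \to \bZ/m \hookrightarrow \bZ/n$ equals $\tfrac{n}{m}$ times reduction mod~$n$, which is immediate.

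The heart of the matter is therefore degree~$1$: for $a \in F^*$ one must show that the image of $\partial_m(a) \in H^1(F,\mu_m)$ under the inclusion $\mu_m \hookrightarrow \mu_n$ equals $\tfrac{n}{m}\,\partial_n(a) \in H^1(F,\mu_n)$, after transporting along the identifications $\mu_m \cong \bZ/m(1)$, $\mu_n \cong \bZ/n(1)$ fixed by $\zeta_m$, $\zeta_n$ with $\zeta_n^{n/m} = \zeta_m$. I would derive this from the compatibility of the Kummer sequences: the diagram with rows $1 \to \mu_n \to (F^{\sep})^* \xrightarrow{\cdot n} (F^{\sep})^* \to 1$ and $1 \to \mu_m \to (F^{\sep})^* \xrightarrow{\cdot m} (F^{\sep})^* \to 1$, linked by the inclusion $\mu_m \hookrightarrow \mu_n$ on the left, the identity in the middle, and multiplication by $n/m$ on the right, commutes; taking connecting homomorphisms in Galois cohomology gives $\iota_* \circ \partial_m = \partial_n \circ (\cdot \tfrac{n}{m}) = \tfrac{n}{m}\,\partial_n$ on $F^*$. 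Finally I would check that the inclusion $\mu_m \hookrightarrow \mu_n$ is compatible, under the chosen roots of unity, with the natural inclusion $\bZ/m(1) \hookrightarrow \bZ/n(1)$ (which on underlying groups is multiplication by $n/m$, i.e.\ the inclusion $\bZ/m \hookrightarrow \bZ/n$), precisely because $\zeta_n^{n/m} = \zeta_m$; tensoring this isomorphism of maps $d$ times and cupping then yields the statement in all degrees.

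The step I expect to require the most care is bookkeeping the twist identifications: one must be sure that the isomorphism $\mu_m^{\otimes d} \cong \bZ/m(d)$ built from $\zeta_m$ and the corresponding one for $n$ are genuinely compatible with the structure maps $\bZ/p^r(j) \to \bZ/p^s(j)$ of the colimit system defining $\bQ_p/\bZ_p(j)$ — i.e.\ that the scalar $n/m$ appearing on the right-hand coefficient group matches the transition maps, so that no spurious factor is introduced or lost. Once the degree~$1$ identity and the twist compatibility are pinned down, multiplicativity of $h_m$ and $h_n$ and the fact that cup product of the inclusion maps $i_{m,n}$ in each factor gives $i_{m,n}$ in the product (again up to the single scalar $n/m$, since only one of the $d$ tensor factors contributes the discrepancy after one accounts for the coefficient map being $\bZ/m(d)\to\bZ/n(d)$) finish the proof.
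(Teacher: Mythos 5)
Your degree~$1$ step is fine: the morphism of Kummer sequences with $\mu_m\hookrightarrow\mu_n$ on the left and $x\mapsto x^{n/m}$ on the right does give $\iota_*\circ\partial_m=\tfrac{n}{m}\,\partial_n$, and this is essentially the same computation the paper performs with explicit cocycles $c_i=(c_i')^{n/m}$. The gap is in the propagation to degree $d$. First, a small point: neither $i_{m,n}\circ h_m$ nor $\tfrac{n}{m}h_n$ is a ring homomorphism (each would pick up $(\tfrac{n}{m})^2$ on products), so "both sides are multiplicative, hence check on degree-$1$ generators" is not literally available; additivity does reduce you to degree-$d$ symbols $\{a_1,\dots,a_d\}$, but no further for free. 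The serious problem is the scalar bookkeeping you yourself flagged: the $d$-fold tensor power of the map $\cdot\tfrac{n}{m}\colon \bZ/m(1)\to\bZ/n(1)$ is multiplication by $(\tfrac{n}{m})^d$ on $\bZ/m(d)\to\bZ/n(d)$, whereas $i_{m,n}$ is multiplication by $\tfrac{n}{m}$ once. Cupping the $d$ degree-$1$ identities therefore only yields $(\tfrac{n}{m})^{d-1}\,i_{m,n}(h_m(\alpha))=(\tfrac{n}{m})^{d-1}\cdot\tfrac{n}{m}h_n(\alpha)$, and multiplication by $(\tfrac{n}{m})^{d-1}$ is not injective on $H^d(F,\bZ/n(d))$, so you cannot cancel it. Your parenthetical claim that "only one of the $d$ tensor factors contributes the discrepancy" is precisely the assertion that needs proof, and it is false at the level of coefficient maps.

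The gap is fixable, in two ways. The paper's route is a direct cocycle computation in degree $d$: choose $\bZ/n(1)$-valued cocycles $\chi_i$ representing $\partial_n(a_i)$; then $(a_1,\dots,a_d)_n$ and $(a_1,\dots,a_d)_m$ are represented by the reductions mod $n$ and mod $m$ of the \emph{same} product $\prod_i\chi_i(\sigma_i)$, so comparing them requires only one application of $\cdot\tfrac{n}{m}$. Alternatively, within your framework, argue by induction on $d$ using the degree-$1$ identity in a \emph{single} factor: the mixed pairing $\bZ/n(1)\otimes\bZ/m(d-1)\to\bZ/n(d)$, $x\otimes y\mapsto x\cdot\tfrac{n}{m}y$, factors both as $(a_1)_n\cup i_{m,n}(\beta)$ and as $i_{m,n}\bigl(\rho((a_1)_n)\cup\beta\bigr)$ with $\rho$ the reduction $\bZ/n(1)\to\bZ/m(1)$ (which sends $(a_1)_n$ to $(a_1)_m$); this produces exactly one factor of $\tfrac{n}{m}$. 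As written, your sketch does neither, so the key step of the lemma remains unproved.
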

\begin{proof}
Let $\iota: \bZ/m (d) \to \bZ/n (d)$ be the inclusion given by multiplication by ${\frac{n}{m}}$ and let $\iota_*: H^d(F,\bZ/m (d))\to H^d(F,\bZ/n (d))$ be the push-forward on Galois cohomology. It suffices to prove for all $a_1,\dots,a_d \in F^*$:
\begin{equation}\label{e.symb}
    \iota_*(a_1,\dots,a_d)_{m} = \frac{n}{m}(a_1,\dots,a_d)_{n}.
\end{equation}
We do this on the level of cocycles. Let $b_i\in {F^{\sep}}$ be an $n$-th root of $a_i$. By definition of the Kummer map, the function $$c'_{i}(\sigma) := b_i \sigma(b_i)^{-1}\in \mu_{n}$$ is a cocycle representing the class of $\partial_{n}(a_i)\in H^1(F,\mu_{n})$; see \eqref{e.kummer}. Similarly, the cocycle $$c_i(\sigma) = b_i^{\frac{n}{m}}\sigma(b_i^{-\frac{n}{m}}) = (b_i\sigma(b_i^{-1}))^{\frac{n}{m}}=c'_i(\sigma)^{\frac{n}{m}}$$ represents the class of $\partial_{m}(a_i)\in H^1(F,\mu_{m})$. Using our choice of isomorphisms $\mu_n \cong \bZ/n(1)$, we can find a cocycle $\chi_i: \Gal(F) \to \bZ/n(1)$ such that 
\begin{equation}\label{e.cocycle}
    \text{for all }\sigma\in \Gal(F): c'_{i}(\sigma) = \zeta_{n}^{\chi_i(\sigma)}
\end{equation}
We calculate:
$$c_i(\sigma) = c'_i(\sigma)^{\frac{n}{m}}= (\zeta^{\chi_i(\sigma)}_{n})^{\frac{n}{m}} = \zeta^{\chi_i(\sigma)}_m .$$
This shows that under the isomorphism $H^1(F,\mu_m) \tilde{\to} H^1(F,\bZ/m(1))$ 
the cohomology class $\partial_m(a_i)$ is sent to the class represented by the cocycle:
$$\Gal(F) \to \bZ/m(1),\ \ \sigma \mapsto \chi_i(\sigma) \mod m.$$
Therefore, $(a_1,\dots,a_d)_{m}$ is represented by the $n$-cocycle:
$$f:\Gal(F)^n \to \bZ/m (n),\ \ f(\sigma_1,\dots,\sigma_d) = \prod_{1\leqslant  i\leqslant  n}\chi_{i}(\sigma_i) \mod m.$$
Pushing $f$ forward using $\iota$, we get a cocycle representing $\iota_*(a_1,\dots,a_d)_{m}$:
$$\iota_*(f) (\sigma_1,\dots,\sigma_d) = \frac{n}{m} \prod_{1\leqslant  i\leqslant  n} \chi_i(\sigma_i) \mod n.$$
This cocycle represents $\displaystyle \frac{n}{m}(a_1,\dots,a_d)_n$ because the cocycle $\chi_i$ represents the image of $\partial_n(a_i)$ under the isomorphism $H^1(F,\mu_n) \tilde{\to} H^1(F,\bZ/n(1))$ by \eqref{e.cocycle}. This implies \eqref{e.symb} and finishes the proof.
\end{proof}

We can now deduce the "global" form of the norm residue isomorphism stated in Theorem~\ref{thm.norm} from the norm residue isomorphism theorem and Lemma~\ref{lem.pow}. Recall that $K(F)/ p^\infty = \bQ_p/\bZ_p \otimes_\bZ K(F)$ by definition.

\begin{theorem}
For any prime $p$ different from $\Char k$, there is an isomorphism of abelian graded groups $K(F)/p^{\infty} \overset{h}{\to} H_p(F)$ given on generators by:
$$h(\frac{1}{p^n}\otimes \{a_1,\dots,a_d\}) = h^d_{p^n}(\{a_1,\dots,a_d\}) = (a_1,\dots,a_n)_{p^n}$$
\end{theorem}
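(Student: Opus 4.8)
The plan is to exhibit both sides as filtered colimits, over $n \in \bN$, of their ``mod $p^n$'' truncations, and then to show that the Galois symbols $h_{p^n}$ already constructed assemble into a morphism of direct systems that is an isomorphism at each finite level.

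First I would recall the elementary fact that for any abelian group $A$ one has $\bQ_p/\bZ_p \otimes_\bZ A = \colim_n A/p^n$, where the transition map $A/p^n \to A/p^{n+1}$ is multiplication by $p$; applying this to $A = K(F)$ gives $K(F)/p^\infty = \colim_n K(F)/p^n$. On the cohomology side, $\bQ_p/\bZ_p(d) = \colim_n \bZ/p^n(d)$ with transition maps given by multiplication by $p$ (the definition recalled in Section~\ref{sect.prel2}), and since Galois cohomology commutes with filtered colimits of coefficient modules, $H^d(F, \bQ_p/\bZ_p(d)) = \colim_n H^d(F, \bZ/p^n(d))$, the transition maps being the natural inclusions $i_{p^n, p^{n+1}}$ appearing in Lemma~\ref{lem.pow}. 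Summing over $d$ gives $H_p(F) = \colim_n \bigoplus_d H^d(F, \bZ/p^n(d))$.

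Next I would invoke the norm residue isomorphism theorem: for each fixed $n$ it asserts that $K_d(F)/p^n \to H^d(F, \mu_{p^n}^{\otimes d})$ is an isomorphism, and composing with the isomorphism $H^d(F, \mu_{p^n}^{\otimes d}) \to H^d(F, \bZ/p^n(d))$ induced by the chosen root of unity $\zeta_{p^n}$ shows that $h^d_{p^n}$ induces an isomorphism $K_d(F)/p^n \to H^d(F, \bZ/p^n(d))$ for every $d$ (for $d = 0$ this is just reduction mod $p^n$). It then remains to check that these isomorphisms respect the transition maps, i.e.\ that the diagram
\[
\begin{CD}
K(F)/p^n @>{h_{p^n}}>> \bigoplus_d H^d(F, \bZ/p^n(d)) \\
@V{p}VV @VV{i_{p^n, p^{n+1}}}V \\
K(F)/p^{n+1} @>{h_{p^{n+1}}}>> \bigoplus_d H^d(F, \bZ/p^{n+1}(d))
\end{CD}
\]
commutes. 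Chasing $x \in K(F)$ around the two ways yields $i_{p^n, p^{n+1}}\bigl(h_{p^n}(x)\bigr)$ and $h_{p^{n+1}}(px)$, and these coincide precisely by Lemma~\ref{lem.pow} applied with $m = p^n$ and $n$ replaced by $p^{n+1}$ (so that the factor $n/m$ there equals $p$). Passing to the colimit produces the desired isomorphism $h \colon K(F)/p^\infty \to H_p(F)$ of graded abelian groups, and tracing through the identifications shows that it carries $\tfrac{1}{p^n} \otimes \{a_1, \dots, a_d\}$, which is the image in the colimit of $\{a_1, \dots, a_d\} \bmod p^n$, to $h^d_{p^n}(\{a_1, \dots, a_d\}) = (a_1, \dots, a_d)_{p^n}$.

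The only genuine point of care is the bookkeeping around the two colimit structures: verifying that the transition maps on both sides are ``multiplication by $p$'' and that the $h_{p^n}$ intertwine them. This is exactly what Lemma~\ref{lem.pow} is designed to provide---it encodes the compatibility forced by the chosen compatible system $\{\zeta_{p^n}\}$---so once that lemma is in hand the remainder of the argument is formal and amounts to unwinding definitions.
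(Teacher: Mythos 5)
Your argument is correct and follows essentially the same route as the paper: both rest on Lemma~\ref{lem.pow} for compatibility of the Galois symbols $h_{p^n}$ and on the finite-level norm residue isomorphism, the only difference being that you package the surjectivity and injectivity checks as a colimit of isomorphisms along a commuting ladder, whereas the paper verifies them directly (writing $\alpha = \tfrac{1}{p^n}\otimes x$ and using injectivity of $h_{p^n}$ mod $p^n$). The bookkeeping you flag — that the transition maps on both sides are multiplication by $p$ and that Lemma~\ref{lem.pow} with $m = p^n$, $n = p^{n+1}$ gives exactly the required square — is right.
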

\begin{proof}
The homomorphism $h$ is well-defined by Lemma~\ref{lem.pow} and because $h_1 = 0$ by definition. Moreover $h$ is surjective by the norm residue isomorphism theorem. Assume $h(\alpha)=0$ for some $\alpha \in\bT K(F)$. We can write $\alpha \cong \frac{1}{p^n}x$ for some $x\in K(F)$ and $n\in \bN$. Since $h(\alpha) = h_{p^n}(x)=0$, the norm residue isomorphism theorem implies $x = p^n x'$ for some $x'\in K(F)$. Therefore $\alpha = 0$. This shows $h$ is indeed an isomorphism.
\end{proof}

\subsection{Extensions of valuations}

The following lemmas will be used to handle extensions of degree prime to $p$.

\begin{lemma}\label{lem.prime_to_p1}
    Let $(F,\nu)$ be a valued field over $k$ and let $F\subset L$ be a finite extension of degree prime to $p$. There exists an extension $w$ of $\nu$ to $L$ such that the ramification degree $[w L : \nu F]$ is prime to $p$.
\end{lemma}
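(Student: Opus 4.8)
The plan is to reduce to the Henselian case, where Ostrowski's theorem is available, by passing to the Henselization $(F^h,\nu^h)$ of $(F,\nu)$; recall that $\nu^h F^h=\nu F$ by \cite[Corollary A.28]{tignol2015value}. Since $L/F$ is finite, $A:=L\otimes_F F^h$ is an $F^h$-algebra of dimension $n:=[L:F]$, and being Artinian it splits as a finite product $A=\prod_{i=1}^g A_i$ of local rings. For each $i$, write $\overline{A}_i$ for the residue field of $A_i$ (a finite extension of $F^h$), and let $w_i^h$ be the \emph{unique} extension of $\nu^h$ to $\overline{A}_i$, which exists because $F^h$ is Henselian. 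The composite $L\hookrightarrow A\twoheadrightarrow A_i\twoheadrightarrow\overline{A}_i$ is a nonzero homomorphism of fields, hence an embedding, and it restricts on $F$ to the canonical map $F\hookrightarrow F^h\hookrightarrow\overline{A}_i$; thus pulling $w_i^h$ back along it gives a valuation $w_i$ on $L$ with $w_i|_F=\nu$.

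Next I would show that at least one $w_i$ has ramification degree prime to $p$. Since $\nu F=\nu^h F^h\subseteq w_i L\subseteq w_i^h\overline{A}_i$, index multiplicativity gives $[w_i L:\nu F]\mid[w_i^h\overline{A}_i:\nu^h F^h]$. Ostrowski's theorem applied to the Henselian field $F^h$ and the finite extension $\overline{A}_i/F^h$ \cite[Theorem A.12]{tignol2015value} yields $[w_i^h\overline{A}_i:\nu^h F^h]\mid[\overline{A}_i:F^h]$, and filtering $A_i$ by the powers of its maximal ideal shows $\dim_{F^h}A_i$ is a multiple of $[\overline{A}_i:F^h]$. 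Combining these, $[w_i L:\nu F]\mid\dim_{F^h}A_i$ for every $i$. Now $\sum_{i=1}^g\dim_{F^h}A_i=\dim_{F^h}A=n$ is prime to $p$, so some summand $\dim_{F^h}A_{i_0}$ is prime to $p$; the valuation $w:=w_{i_0}$ then has $[wL:\nu F]$ dividing $\dim_{F^h}A_{i_0}$, hence prime to $p$.

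The point that requires care is precisely that $(F,\nu)$ is \emph{not} assumed Henselian, so an arbitrary extension of $\nu$ to $L$ need not have ramification index dividing $[L:F]$; this is why we must pass to $F^h$ (where Ostrowski applies) and then restrict back to $L$. If one prefers to avoid the nilpotents that can appear in $A=L\otimes_F F^h$ when $L/F$ is inseparable, one can instead first dispose of the purely inseparable part of $L/F$ — where the extension of $\nu$ is unique and its ramification index is a power of $\Char k\neq p$ — and then run the above argument for the separable subextension, for which $A$ is a genuine product of fields; but the uniform version above needs no such split.
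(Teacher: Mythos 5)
Your proof is correct and takes essentially the same route as the paper's: pass to the Henselization, decompose $L\otimes_F F^h$ into factors, apply Ostrowski's theorem to each factor over $F^h$, and conclude because the $F^h$-dimensions of the factors sum to $[L:F]$, which is prime to $p$. The only cosmetic difference is that the paper invokes \cite[Theorem A.32]{tignol2015value} to identify the factors as Henselizations of $(L,w_i)$, whereas you extract the needed divisibilities directly from the Artinian decomposition; incidentally, the nilpotents you guard against at the end cannot actually occur, since $F^h/F$ is separable algebraic and hence $L\otimes_F F^h$ is reduced.
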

\begin{proof}
    Let $w_1,\dots,w_r$ be all the extensions of $\nu$ to $L$.  Denote by $F^h$ the Henselization of $(F,\nu)$, and choose Henselizations  $F^h \subset L^h_i$ of $(L,w_i)$ (these exist by \cite[Theorem A.30]{tignol2015value}). Using \cite[Theorem A.32]{tignol2015value}, we get an $F^h$-linear isomorphism:
    $$L \otimes_F F^h \cong L^h_1 \times \dots \times L^h_r.$$
    Comparing dimensions over $F^h$ we find:
    \begin{equation}\label{eq.deg_henselizations}
        [L:F] = [L^h_1: F^h] + \dots +[L^h_r: F^h].
    \end{equation}
    Let $e_i = [w_i L:\nu F]$ be the ramification degree of $(L,w_i)$ over $(F,\nu)$ and note that it is also the ramification degree of $(L^h_i, w_i)$ over $(F^h, \nu)$. Ostrowski's theorem implies $e_i$ divides $[L^h_i: F^h]$ for all $i$ \cite[Theorem A.12]{tignol2015value}. Since the left hand side of \eqref{eq.deg_henselizations} is prime to $p$, we conclude that $e_i$ is prime to $p$ for some $i$.   
\end{proof}

\begin{lemma}\label{lem.prime_to_p2}
    Let $\Gamma \subset \Gamma'$ be finitely generated free abelian groups such that $[\Gamma':\Gamma]$ is prime to $p$. The canonical inclusion $\bigwedge \Gamma \subset \bigwedge \Gamma'$ induces an isomorphism:
    \begin{equation} \label{e.prime_to_p2}
    \bigwedge \Gamma / p^\infty \cong \bigwedge \Gamma'/ p^\infty .
    \end{equation}
\end{lemma}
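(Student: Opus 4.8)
The plan is to reduce, via the elementary divisors theorem, to a componentwise computation in which the only discrepancy between $\bigwedge\Gamma$ and $\bigwedge\Gamma'$ is multiplication by integers prime to $p$, which become invertible after tensoring with $\bQ_p/\bZ_p$. First I would observe that $\Gamma$ and $\Gamma'$ have the same rank $r$ (since the index is finite) and invoke the Smith normal form: there is a basis $e_1,\dots,e_r$ of $\Gamma'$ and positive integers $d_1\mid\dots\mid d_r$ such that $d_1e_1,\dots,d_re_r$ is a basis of $\Gamma$ and $\prod_i d_i=[\Gamma':\Gamma]$. Since this index is prime to $p$, every $d_i$ is prime to $p$.

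Next I would pass to exterior algebras. For an increasing multi-index $I=(i_1<\dots<i_s)$ put $e_I=e_{i_1}\wedge\dots\wedge e_{i_s}$ and $d_I=\prod_{j\in I}d_{i_j}$; then $\{e_I\}_I$ is a $\bZ$-basis of $\bigwedge\Gamma'$ and $\{d_Ie_I\}_I$ is a $\bZ$-basis of $\bigwedge\Gamma$, so under the canonical inclusion we get the decomposition $\bigwedge\Gamma=\bigoplus_I d_I\bZ\,e_I\subset\bigoplus_I\bZ\,e_I=\bigwedge\Gamma'$, compatibly with the grading. Each $d_I$ is prime to $p$, being a product of the $d_i$.

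Then I would apply $-\otimes_{\bZ}\bQ_p/\bZ_p$, which commutes with direct sums, so that the natural map $\bigwedge\Gamma/p^\infty\to\bigwedge\Gamma'/p^\infty$ is the direct sum over $I$ of the maps $(d_I\bZ)\otimes\bQ_p/\bZ_p\to\bZ\otimes\bQ_p/\bZ_p$. Identifying $d_I\bZ$ with $\bZ$ by sending its generator $d_I$ to $1$, this map becomes multiplication by $d_I$ on $\bQ_p/\bZ_p$, which is bijective because $d_I$ is invertible in $\bZ/p^n$ for every $n$ and hence acts invertibly on $\bQ_p/\bZ_p=\colim_n\bZ/p^n$. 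Summing over $I$ yields the isomorphism \eqref{e.prime_to_p2}.

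A basis-free variant, which I could give instead: the cokernel $Q=\bigwedge\Gamma'/\bigwedge\Gamma$ is a finite abelian group whose order, by the Smith form, is a product of divisors of $[\Gamma':\Gamma]$ and is therefore prime to $p$; hence $Q\otimes\bQ_p/\bZ_p=0$ and $\Tor_1^{\bZ}(Q,\bQ_p/\bZ_p)=0$ (in both cases because multiplication by $|Q|$ is simultaneously zero and invertible), and since $\bigwedge\Gamma'$ is free the long exact $\Tor$-sequence of $0\to\bigwedge\Gamma\to\bigwedge\Gamma'\to Q\to 0$ collapses to the desired isomorphism. There is no serious obstacle here; the only point requiring care is that ``$\bigwedge\Gamma/p^\infty$'' denotes $(\bigwedge\Gamma)\otimes_{\bZ}\bQ_p/\bZ_p$, as fixed in Section~\ref{sect.notational-conventions}, and that every identification above respects the exterior-algebra grading (including the degree-$0$ part, where $d_\emptyset=1$ and the map is the identity on $\bQ_p/\bZ_p$).
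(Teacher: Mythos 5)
Your proof is correct and follows essentially the same route as the paper: both invoke the elementary divisors theorem to get compatible bases, observe that the pure wedges give bases of the exterior algebras differing by multipliers $d_I$ prime to $p$, and conclude that the inclusion becomes an isomorphism after tensoring with $\bQ_p/\bZ_p$. The paper phrases the last step as an isomorphism mod $p^n$ for each $n$ followed by a colimit, while you tensor componentwise (or, in your variant, kill the prime-to-$p$ cokernel via Tor), but these are only cosmetic differences.
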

   
    \begin{proof}
        By the Elementary Divisors Theorem (as in the proof of Proposition~\ref{prop.ed_limit}),
      $\Gamma'$ has a basis $e_1,\dots,e_r$ such that $d_1 e_1,\dots,d_r e_r$ is a basis of $\Gamma$ for some positive integers $d_1, \ldots, d_r$.
      Then $[\Gamma':\Gamma] = \prod d_i$; in particular, $d_i$ is prime to $p$ for every $i$. The group $\bigwedge \Gamma'$ has a basis $\{e_{i_1}\wedge\dots\wedge e_{i_k}\}$ indexed by increasing sequences $1\leqslant  i_1 < \dots<i_k \leqslant  r$. Similarly, the pure tensors $\{d_{i_1}e_{i_1}\wedge\dots\wedge d_{i_k}e_{i_k}\}$ form a basis for $\bigwedge \Gamma$. Therefore the index $[\bigwedge \Gamma' : \bigwedge \Gamma]$ is prime to $p$.
      Consequently, the inclusion $\bigwedge \Gamma \hookrightarrow \bigwedge \Gamma'$ induces an isomorphism
      $(\bigwedge \Gamma)/p^n \stackrel{\sim}{\to} (\bigwedge \Gamma') /p^n$ for every $n \geqslant 1$. 
      Taking the colimit as $n \to \infty$,
      we obtain the isomorphism ~\eqref{e.prime_to_p2}. 
    \end{proof}



\bibliographystyle{plain}
\bibliography{references.bib}

\begin{thebibliography}{10}

\bibitem{berhuy2003essential}
Gr{\'e}gory Berhuy and Giordano Favi.
\newblock Essential dimension: a functorial point of view (after {A}.
  {M}erkurjev).
\newblock {\em Doc. Math. 2003;8(106):279-330.}, 2003.

\bibitem{borel_lag}
Armand Borel.
\newblock {\em Linear algebraic groups}, volume 126 of {\em Graduate Texts in
  Mathematics}.
\newblock Springer-Verlag, New York, second edition, 1991.

\bibitem{bourbaki_ac_1-7}
Nicolas Bourbaki.
\newblock {\em Commutative algebra. {C}hapters 1--7}.
\newblock Elements of Mathematics (Berlin). Springer-Verlag, Berlin, 1998.
\newblock Translated from the French, Reprint of the 1989 English translation.

\bibitem{brv-annals}
Patrick Brosnan, Zinovy Reichstein, and Angelo Vistoli.
\newblock Essential dimension, spinor groups, and quadratic forms.
\newblock {\em Ann. of Math. (2)}, 171(1):533--544, 2010.

\bibitem{brussel2001division}
E.~S. Brussel.
\newblock The division algebras and {B}rauer group of a strictly {H}enselian
  field.
\newblock {\em J. Algebra}, 239(1):391--411, 2001.

\bibitem{chernousov2003kernel}
V.~Chernousov.
\newblock The kernel of the {R}ost invariant, {S}erre's conjecture {II} and the
  {H}asse principle for quasi-split groups {${}^{3,6}D_4,E_6,E_7$}.
\newblock {\em Math. Ann.}, 326(2):297--330, 2003.

\bibitem{chernousov2006lower}
Vladimir Chernousov and Jean-Pierre Serre.
\newblock Lower bounds for essential dimensions via orthogonal representations.
\newblock {\em Journal of Algebra}, 305(2):1055--1070, 2006.

\bibitem{elman1982arason}
Richard Elman.
\newblock On arason's theory of galois cohomology.
\newblock {\em Communications in Algebra}, 10(13):1449--1474, 1982.

\bibitem{engler2005valued}
Antonio~J Engler and Alexander Prestel.
\newblock {\em Valued fields}.
\newblock Springer Science \& Business Media, 2005.

\bibitem{garibaldi2001rost}
Ryan~Skip Garibaldi.
\newblock The rost invariant has trivial kernel for quasi-split groups of low
  rank.
\newblock {\em Commentarii Mathematici Helvetici}, 76(4):684--711, 2001.

\bibitem{garibaldi2016shells}
S.~Garibaldi, V.~Petrov, and N.~Semenov.
\newblock Shells of twisted flag varieties and the {R}ost invariant.
\newblock {\em Duke Math. J.}, 165(2):285--339, 2016.

\bibitem{garibaldi2009orthogonal}
Skip Garibaldi.
\newblock Orthogonal involutions on algebras of degree 16 and the {K}illing
  form of {$E_8$}.
\newblock In {\em Quadratic forms---algebra, arithmetic, and geometry}, volume
  493 of {\em Contemp. Math.}, pages 131--162. Amer. Math. Soc., Providence,
  RI, 2009.
\newblock With an appendix by Kirill Zainoulline.

\bibitem{garibaldi2003cohomological}
Skip Garibaldi, Alexander Merkurjev, and Jean-Pierre Serre.
\newblock {\em Cohomological invariants in {G}alois cohomology}, volume~28 of
  {\em University Lecture Series}.
\newblock American Mathematical Society, Providence, RI, 2003.

\bibitem{gille2009lower}
Philippe Gille and Zinovy Reichstein.
\newblock A lower bound on the essential dimension of a connected linear group.
\newblock {\em Commentarii Mathematici Helvetici}, 84(1):189--212, 2009.

\bibitem{gille2017central}
Philippe Gille and Tam\'{a}s Szamuely.
\newblock {\em Central simple algebras and {G}alois cohomology}, volume 165 of
  {\em Cambridge Studies in Advanced Mathematics}.
\newblock Cambridge University Press, Cambridge, 2017.
\newblock Second edition of [ MR2266528].

\bibitem{griess1991elementary}
Robert~L Griess~Jr.
\newblock Elementary abelian p-subgroups of algebraic groups.
\newblock {\em Geometriae Dedicata}, 39(3):253--305, 1991.

\bibitem{hodge1954}
W.~V.~D. Hodge and D.~Pedoe.
\newblock {\em Methods of algebraic geometry. {V}ol. {III}}.
\newblock Cambridge Mathematical Library. Cambridge University Press,
  Cambridge, 1994.
\newblock Book V: Birational geometry, Reprint of the 1954 original.

\bibitem{jantzen2004nilpotent}
Jens~Carsten Jantzen, Karl-Hermann Neeb, and Jens~Carsten Jantzen.
\newblock Nilpotent orbits in representation theory.
\newblock {\em Lie theory: Lie algebras and representations}, pages 1--211,
  2004.

\bibitem{book_involution}
Max-Albert Knus, Alexander Merkurjev, Markus Rost, and Jean-Pierre Tignol.
\newblock {\em The book of involutions}, volume~44 of {\em American
  Mathematical Society Colloquium Publications}.
\newblock American Mathematical Society, Providence, RI, 1998.
\newblock With a preface in French by J. Tits.

\bibitem{knus1998involutions}
Max-Albert Knus, Alexander Merkurjev, Markus Rost, and Jean-Pierre Tignol.
\newblock {\em The book of involutions}, volume~44 of {\em American
  Mathematical Society Colloquium Publications}.
\newblock American Mathematical Society, Providence, RI, 1998.
\newblock With a preface in French by J. Tits.

\bibitem{lang2002algebra}
Serge Lang.
\newblock {\em Algebra}, volume 211 of {\em Graduate Texts in Mathematics}.
\newblock Springer-Verlag, New York, third edition, 2002.

\bibitem{lorenz2003}
M.~Lorenz, Z.~Reichstein, L.~H. Rowen, and D.~J. Saltman.
\newblock Fields of definition for division algebras.
\newblock {\em J. London Math. Soc. (2)}, 68(3):651--670, 2003.

\bibitem{mckinnie2017essential}
Kelly McKinnie.
\newblock Essential dimension of generic symbols in characteristic.
\newblock In {\em Forum of Mathematics, Sigma}, volume~5, page e14. Cambridge
  University Press, 2017.

\bibitem{merkurjev-I^3}
A.~S. Merkurjev.
\newblock On the norm residue symbol of degree {$2$}.
\newblock {\em Dokl. Akad. Nauk SSSR}, 261(3):542--547, 1981.

\bibitem{merkurjev-ci}
Alexander Merkurjev.
\newblock Rost invariants of simply connected algebraic groups.
\newblock In {\em Cohomological invariants in {G}alois cohomology}, volume~28
  of {\em Univ. Lecture Ser.}, pages 101--158. Amer. Math. Soc., Providence,
  RI, 2003.
\newblock With a section by Skip Garibaldi.

\bibitem{merkurjev2009essential}
Alexander~S. Merkurjev.
\newblock Essential dimension.
\newblock In {\em Quadratic forms---algebra, arithmetic, and geometry}, volume
  493 of {\em Contemp. Math.}, pages 299--325. Amer. Math. Soc., Providence,
  RI, 2009.

\bibitem{merkurjev-PGLp^2}
Alexander~S. Merkurjev.
\newblock Essential {$p$}-dimension of {${\rm PGL}(p^2)$}.
\newblock {\em J. Amer. Math. Soc.}, 23(3):693--712, 2010.

\bibitem{merkurjev-PGLn}
Alexander~S. Merkurjev.
\newblock A lower bound on the essential dimension of simple algebras.
\newblock {\em Algebra Number Theory}, 4(8):1055--1076, 2010.

\bibitem{merkurjev-survey}
Alexander~S. Merkurjev.
\newblock Essential dimension: a survey.
\newblock {\em Transform. Groups}, 18(2):415--481, 2013.

\bibitem{meyer2012valuation}
Aurel Meyer.
\newblock A valuation theoretic approach to essential dimension.
\newblock {\em arXiv preprint arXiv:1202.5363}, 2012.

\bibitem{milne2017algebraic}
J.~S. Milne.
\newblock {\em Algebraic groups}, volume 170 of {\em Cambridge Studies in
  Advanced Mathematics}.
\newblock Cambridge University Press, Cambridge, 2017.
\newblock The theory of group schemes of finite type over a field.

\bibitem{neukirch2013cohomology}
J{\"u}rgen Neukirch, Alexander Schmidt, and Kay Wingberg.
\newblock {\em Cohomology of number fields}, volume 323.
\newblock Springer Science \& Business Media, 2013.

\bibitem{newman1972integral}
Morris Newman.
\newblock {\em Integral matrices}.
\newblock Academic Press, 1972.

\bibitem{ofek2022reduction}
D.~Ofek.
\newblock Reduction of structure to parabolic subgroups.
\newblock {\em Documenta Mathematica}, 27:1421--1446, 2022.

\bibitem{procesi}
Claudio Procesi.
\newblock Non-commutative affine rings.
\newblock {\em Atti Accad. Naz. Lincei Mem. Cl. Sci. Fis. Mat. Natur. Sez. I
  (8)}, 8:237--255, 1967.

\bibitem{raczek}
M\'{e}lanie Raczek.
\newblock On the 3-{P}fister number of quadratic forms.
\newblock {\em Comm. Algebra}, 41(1):342--360, 2013.

\bibitem{reichstein2010essential}
Zinovy Reichstein.
\newblock Essential dimension.
\newblock {\em Proceedings of the International Congress of Mathematicians
  (Vol. 2)}, 2010.

\bibitem{reichstein-vistoli-prime}
Zinovy Reichstein and Angelo Vistoli.
\newblock Essential dimension of finite groups in prime characteristic.
\newblock {\em C. R. Math. Acad. Sci. Paris}, 356(5):463--467, 2018.

\bibitem{reichstein2000essential}
Zinovy Reichstein and Boris Youssin.
\newblock Essential dimensions of algebraic groups and a resolution theorem for
  g-varieties.
\newblock {\em Canadian Journal of Mathematics}, 52(5):1018--1056, 2000.

\bibitem{reichstein-youssin}
Zinovy Reichstein and Boris Youssin.
\newblock Essential dimensions of algebraic groups and a resolution theorem for
  {$G$}-varieties.
\newblock {\em Canad. J. Math.}, 52(5):1018--1056, 2000.
\newblock With an appendix by J\'anos Koll\'ar and Endre Szab\'o.

\bibitem{ruozzi}
Anthony Ruozzi.
\newblock Essential {$p$}-dimension of {${\rm PGL}_n$}.
\newblock {\em J. Algebra}, 328:488--494, 2011.

\bibitem{sansuc1981groupe}
J.-J. Sansuc.
\newblock Groupe de {B}rauer et arithm\'{e}tique des groupes alg\'{e}briques
  lin\'{e}aires sur un corps de nombres.
\newblock {\em J. Reine Angew. Math.}, 327:12--80, 1981.

\bibitem{serre1997galois}
Jean-Pierre Serre.
\newblock {\em Galois cohomology}.
\newblock Springer-Verlag, Berlin, 1997.
\newblock Translated from the French by Patrick Ion and revised by the author.

\bibitem{serre-ci}
Jean-Pierre Serre.
\newblock Cohomological invariants, {W}itt invariants, and trace forms.
\newblock In {\em Cohomological invariants in {G}alois cohomology}, volume~28
  of {\em Univ. Lecture Ser.}, pages 1--100. Amer. Math. Soc., Providence, RI,
  2003.
\newblock Notes by Skip Garibaldi.

\bibitem{springer-steinberg}
T.~A. Springer and R.~Steinberg.
\newblock Conjugacy classes.
\newblock In {\em Seminar on {A}lgebraic {G}roups and {R}elated {F}inite
  {G}roups ({T}he {I}nstitute for {A}dvanced {S}tudy, {P}rinceton, {N}.{J}.,
  1968/69)}, Lecture Notes in Mathematics, Vol. 131, pages 167--266. Springer,
  Berlin, 1970.

\bibitem{tignol2015value}
Jean-Pierre Tignol and Adrian~R Wadsworth.
\newblock {\em Value functions on simple algebras, and associated graded
  rings}, volume~6.
\newblock Springer, 2015.

\bibitem{tits1965classification}
Jacques Tits.
\newblock Classification of algebraic semisimple groups.
\newblock In {\em Algebraic Groups and Discontinuous Subgroups (Proc. Sympos.
  Pure Math., Boulder, Colo., 1965)}, volume~9, pages 33--62, 1965.

\bibitem{wadsworth1983henselian}
Adrian~R. Wadsworth.
\newblock {$p$}-{H}enselian field: {$K$}-theory, {G}alois cohomology, and
  graded {W}itt rings.
\newblock {\em Pacific J. Math.}, 105(2):473--496, 1983.

\bibitem{zariski1960commutative}
Oscar Zariski and Pierre Samuel.
\newblock {\em Commutative algebra. {V}ol. {II}}.
\newblock The University Series in Higher Mathematics. D. Van Nostrand Co.,
  Inc., Princeton, N. J.-Toronto-London-New York, 1960.

\end{thebibliography}

\end{document}